\documentclass{amsart}
\usepackage{geometry}
\usepackage{graphicx}
\usepackage{color}
\usepackage{amssymb,amsmath,mathtools}
\usepackage{stmaryrd}

%====================To be commented============================
% \usepackage[notcite,notref]{showkeys} % To see crossreferences.
\usepackage{hyperref}
%====================To be commented============================

%====== Additional definitions
\usepackage{nondim,mydef,boldfonts,xspace}

\makeatletter
\makeatother

\begin{document}

\title[Electrowetting]{A Diffuse Interface Model for Electrowetting with Moving Contact Lines}

\author[R.H.~Nochetto]{Ricardo H.~Nochetto$^1$}
\address{$^1$Department of Mathematics and Institute for Physical Science and Technology,
University of Maryland, College Park, MD 20742, USA.}
\email{rhn@math.umd.edu}

\author[A.J.~Salgado]{Abner J.~Salgado$^2$}
\address{$^2$Department of Mathematics, University of Maryland, College Park, MD 20742, USA.}
\email{abnersg@math.umd.edu}

\author[S.W.~Walker]{Shawn W.~Walker$^3$}
\address{$^3$Department of Mathematics and Center for Computation and Technology, Louisiana State University,
Baton Rouge, LA 70803, USA.}
\email{walker@math.lsu.edu}

\thanks{
This material is based on work supported by NSF grants CBET-0754983 and DMS-0807811.
AJS is also supported by an AMS-Simons Travel Grant.
}

\keywords{Electrowetting; Navier Stokes; Cahn Hilliard; Multiphase Flow; Contact Line.}

\subjclass[2000]{35M30,    % PDE, Systems of mixed type
35Q30,    % PDE, Navier Stokes
76D27,    % FLUID MECHANICS, Other free-boundary flows; Hele-Shaw flows
76T10,     % Liquid-gas two-phase flows, bubbly flows
76D45.    % FLUID MECHANICS, Capillarity (surface tension)
% 65N12.    % Stability and convergence of numerical methods
}

\date{Submitted to M3AS on \today}

\begin{abstract}
We introduce a diffuse interface model for the phenomenon of electrowetting on dielectric
and present an analysis of the arising system of equations. Moreover, we study discretization techniques for
the problem. The model takes into account different material parameters on each phase and incorporates the 
most important physical processes, such as incompressibility, electrostatics and dynamic contact lines;
necessary to properly reflect the relevant phenomena.
The arising nonlinear system couples the variable density incompressible Navier-Stokes equations for
velocity and pressure with a Cahn-Hilliard type equation for the phase variable and chemical potential, a
convection diffusion equation for the electric charges and a Poisson equation for the electric potential.
Numerical experiments are presented, which illustrate the wide range of
effects the model is able to capture, such as splitting and coalescence of droplets.
\end{abstract}

\maketitle

\section{Introduction}
\label{sec:Intro}
The term electrowetting on dielectric refers to the local modification of the surface tension between 
two immiscible fluids via electric actuation. This allows for change of shape and wetting behavior 
of a the two-fluid system and, thus, for its manipulation and control.

The existence of such a phenomenon was originally discovered by Lippmann \cite{Lippmann1875},
more than a century ago (see also 
\cite{PhysRevB.76.035437, MugelePhysics, Berge:Comptes_Rendus, Shapiro:JAP}). However, only recently has 
electrowetting found a wide spectrum of applications, specially in the realm of micro-fluidics 
\cite{Cho_Moon:ME_Congress_2001, Cho:JMEMS, Gong:MEMSConf}. One can mention, for example, reprogrammable 
lab-on-chip systems \cite{Lee:Sens_Act_2002, SaekiKim_PMSE2001}, auto-focus cell phone lenses 
\cite{BergePeseux_EPJ2000}, colored oil pixels and video speed smart paper
\cite{HayesFeenstra_Nature2003, Roques-CarmesFeenstra_JAP2004, Roques-CarmesHayes_JAP2004}. In 
\cite{NatureInverseEwod}, the reverse electrowetting process has been proposed as an approach to energy 
harvesting.

From the examples presented above, it becomes clear that it is very important for applications to 
have a better understanding of this phenomenon and it is necessary to obtain reliable computational 
tools for the simulation and control of these effects. The computational models must be complete enough, 
so that they can reproduce
the most important physical effects, yet sufficiently simple that it is possible to extract from them
meaningful information in a reasonable amount of computing time.
Several works have been concerned with the modeling of electrowetting. The approaches include
experimental relations and scaling laws \cite{ewodExperimentJapanese,springerlink:10.1007/s10404-008-0360-y},
empirical models \cite{CambridgeJournals:1380872},
studies concerning the dependence of the contact angle (\cite{MR2551398,MR2483669})
or the shape of the droplet (\cite{MR2487069,MR2683577}) on the applied voltage,
lattice Boltzmann methods \cite{MR2745030,MR2557502} and others.
Of relevance to our present discussion are the
works \cite{walker:102103,MR2595379} and \cite{MR2511642,fontelos:527}. To the best of our knowledge,
\cite{walker:102103,MR2595379} are the first papers where the contact line pinning was included in
an electrowetting model. On the other hand the models of \cite{MR2511642,fontelos:527} are the only ones
that are intrinsically three dimensional and do not assume any special geometric configuration. They
have the limitation, however, that they assume the density of the two fluids to be constant and they apply
a no-slip boundary condition to the fluid-solid interface, thus limiting the movement of the droplet.

The purpose of this work is to propose and analyze an electrowetting model that is intrinsically
three-dimensional; it takes into account that all material parameters are different in each one of 
the fluids; and it is derived (as long as this is possible) from physical principles. To do so, 
we extend the diffuse
interface model of \cite{MR2511642}. The main additions are the fact that we allow the fluids to
have different densities -- thus leading to a variable density Cahn Hilliard Navier Stokes system -- 
and that we treat the contact line movement in a thermodynamically consistent
way, namely using the so-called generalized Navier boundary condition (see \cite{MR2261865,QianCiCP}).
In addition, we propose a (phenomenological) approach to contact line pinning and study stability and 
convergence of discretization techniques. In this respect, our work also differs from 
\cite{MR2511642,fontelos:527}, since our approach deals with a practical fully discrete scheme, for
which we derive a priori estimates and convergence results.

Through private communication we have become aware of the following recent contributions: discretization
schemes for the model proposed in \cite{MR2511642} are studied in \cite{klingbeilpaja}; the models of
\cite{MR2511642,fontelos:527} have been extended, using the techniques of \cite{AbelsGarckeGrun}, in
\cite{Grunmodelpaja,grunklingbeilpaja} where discretization issues are also discussed.

This work is organized as follows. In \S\ref{sub:Notation} we introduce the notation and some preliminary
assumptions necessary for our discussion. Section~\ref{sec:Model} describes the model that we shall be
concerned with and its physical derivation. A formal energy estimate and a formal weak formulation of our
problem is shown in section~\ref{sec:Energy}. The energy estimate shown in this section serves as
a basis for the precise definition of our notion of solution and the proof of its existence. The details of
this are accounted for in section~\ref{sec:Discrete}. In section~\ref{sec:NumExp} we discuss
discretization techniques for our problem and present some numerical experiments aimed at showing the
capabilities of our model: droplet splitting and coalescence as well as contact line movement.
Finally, in section~\ref{sec:semidiscrete}, we briefly discuss convergence of the discrete solutions
to solutions of a semi-discrete problem.

\subsection{Notation and Preliminaries}
\label{sub:Notation}
\begin{figure}[h]
\label{fig:conf}
\includegraphics[scale=0.5]{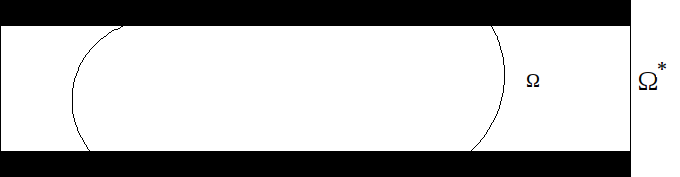}
\caption{The basic configuration of an electrowetting on dielectric device 
\cite{Cho_Moon:ME_Congress_2001, Cho:JMEMS}. The solid black region
depicts the dielectric plates and the white region denotes a droplet of one fluid (say water),
which is surrounded by another (air). We denote by $\Omega$ the fluid domain,
by $\Gamma$ its boundary, by $\Omega^\star$ the region occupied by the fluids and the plates
and by $\partial^\star\Omega^\star := \partial\Omega^\star \setminus \Gamma$.}
\end{figure}

Figure~\ref{fig:conf} shows the basic configuration for the
electrowetting on dielectric problem. We use the symbol $\Omega$ to denote the
domain occupied by the fluid and $\Omega^\star$ for the fluid and dielectric plates, thus,
$\Omega \subset \Omega^\star$.
In this manner, we assume that $\Omega$ and $\Omega^\star$ are convex, bounded connected
domains in $\Real^d$, for $d=2$ or $3$, with $\calC^{0,1}$ boundaries. The boundary of $\Omega$ is
denoted by $\Gamma$ and $\partial^\star \Omega^\star = \partial\Omega^\star \setminus \Gamma$,
$\bn$ stands for the outer unit normal to $\Gamma$.
We denote by
$[0,T]$ with $0<T<\infty$ the time interval of interest.
For any vector valued function $\bw :\Omega \rightarrow \Real^d$ that is smooth enough so as to have a trace on
$\Gamma$, we define the tangential component of $\bw$ as
\begin{equation}
  \bw_{\btau}|_\Gamma := \bw|_\Gamma - (\bw|_\Gamma \SCAL \bn ) \bn,
\label{eq:defttrace}
\end{equation}
and, for any scalar function $f$, $\partial_\btau f := (\GRAD f)_\btau$.

We will use standard notation for spaces of Lebesgue integrable functions $L^p(\Omega),\ 1\leq p \leq \infty$
and Sobolev spaces $W^m_p(\Omega)\ 1\leq p \leq \infty,\ m \in \polN_0$, \cite{02208228}.
Vector valued functions and spaces of vector valued functions will be denoted by boldface characters.
For $S\subset \Real^d$, by $\langle \cdot, \cdot\rangle_S$ we denote, indistinctly, the $L^2(S)$-
or $\bL^2(S)$-inner product. If no subscript is given, we assume that the domain is $\Omega$.
If $S \subset \Real^{d-1}$, then the inner product is denoted by $[\cdot,\cdot]_S$ and if no subindex is
given, the domain must be understood to be $\Gamma$.
We define the following spaces:
\begin{equation}
  \Hunstar := \left\{ v \in H^1(\Omega^\star): v|_{\partial^\star\Omega^\star}=0 \right\},
\label{eq:defHunstar}
\end{equation}
normed by
\[
  \| v \|_{H^1_\star} := \| \GRAD v \|_{\bL^2(\Omega^\star)},
\]
and
\begin{equation}
  \bV := \left\{ \bv \in \Hund: \bv\SCAL\bn|_\Gamma =0   \right\},
\label{eq:defbV}
\end{equation}
which we endow with the norm
\[
  \| \bv \|_{\bV}^2 := \| \GRAD \bv \|_{\bL^2}^2 + \| \bv_\btau \|_{\bL^2(\Gamma)}^2.
\]
Clearly, for these norms, they are Hilbert spaces.

To take into account the fact that our problem will be time dependent we introduce the following notation.
Let $E$ be a normed space with
norm $\|\cdot\|_E$. The space of functions $\varphi : [0,T] \rightarrow E$ such that the map
$(0,T)\ni t \mapsto \| \varphi(t)\|_E \in \Real$ is $L^p$-integrable is denoted by
$L^p(0,T,E)$ or $L^p(E)$. To discuss the time discretization of our problem, we introduce a time-step
$\dt>0$ (for simplicity assumed constant) and let $t_n = n \dt$ for $0 \leq n \leq N:=\lceil T/\dt \rceil$.
for any time-dependent function, $\varphi$, we denote $\varphi^n := \varphi(t_n)$ and the sequence of values
$\{ \varphi^n \}_{n=0}^N$ is denoted by $\varphi_\dt$. For any sequence $\varphi_\dt$ we define the
time-increment operator $\frakd$ by
\begin{equation}
  \frakd \varphi^n := \varphi^n - \varphi^{n-1},
\label{eq:frakd}
\end{equation}
and the time average operator $\overline{(\cdot)}$ by
\begin{equation}
  \overline{\varphi^n} := \frac12\left( \varphi^n + \varphi^{n-1} \right).
\label{eq:defstar}
\end{equation}
On sequences $\varphi_\dt \subset E$ we define the norms
\[
  \| \varphi_\dt \|^2_{\ell^2(E)} := \dt \sum_{n=0}^N \| \varphi^n\|_E^2,
  \quad
  \| \varphi_\dt \|_{\ell^\infty(E)} := \max_{0 \leq n \leq N} \left\{ \| \varphi^n \|_E  \right\},
  \quad
  \| \varphi_\dt \|^2_{\frakh^{1/2}(E)} := \sum_{n=1}^N \| \frakd \varphi^n \|_E^2.
\]
which are, respectively, discrete analogues of the $L^2(E)$, $L^\infty(E)$ and $H^{1/2}(E)$ norms. When
dealing with energy estimates of time discrete problems, we will make, without explicit mention,
repeated use of the following elementary identity
\begin{equation}
  2a(a-b) = a^2 - b^2 + (a-b)^2.
\label{eq:toobasic}
\end{equation}

\section{Model Derivation}
\label{sec:Model}
In this section we briefly describe the derivation of our model. The procedure
used to obtain it is quite similar to the arguments used in \cite{MR2511642,MR2261865,AbelsGarckeGrun}
and
it fits into the general ideological framework of so-called phase-field models.
In phase-field methods, sharp interfaces are replaced by thin transitional layers where the interfacial
forces are now smoothly distributed and, thus, there is no need to explicitly track interfaces.

\subsection{Diffuse Interface Model}
\label{sub:Model}
To develop a phase-field model, we begin by introducing a so-called phase field variable $\phi$ and
an interface thickness $\delta$. The phase field variable acts as a marker that will be almost constant
(in our case $\pm1$) in the bulk regions, and will smoothly transition between these values in an
interfacial region of thickness $\delta$. Having introduced the phase field,
% the electric permittivity is defined as
% \begin{equation}
%   \vare(\phi) = \frac{ \vare_1 - \vare_2 }2 \tilde \phi + \frac{ \vare_1 + \vare_2 }2,
%   \quad
%   \tilde\phi = \max\{-1,\min\{1,\phi\}\}.
% \label{eq:def-vare}
% \end{equation}
% All the other 
all the material properties that depend on the phase are slave variables and defined as
\begin{equation}
  \Psi(\phi) = \frac{ \Psi_1 - \Psi_2 }2 \arctan\left(\frac\phi\delta\right) + \frac{ \Psi_1 + \Psi_2 }2,
\label{eq:slave}
\end{equation}
where the $\Psi_i$ are the values on each one of the phases.

\begin{rem}[Material properties]
Relation \eqref{eq:slave} is not the only possible definition of the phase dependent quantities. For
instance, \cite{Shen_Xiaofeng_2010} proposes to use a linear average between the bulk values. This approach has
the advantage that the derivative of a phase-dependent field with respect to the phase (expressions
that contain such quantities appear repeatedly) is constant, which greatly simplifies the calculations.
However, this definition cannot be guaranteed to stay in the physical range of values which might lead to,
say, a vanishing density or viscosity. On the other hand, \cite{MR1984386} proposes to use a harmonic average
which guarantees that positive quantities stay bounded away from zero. In this work, we will assume that,
with the exception of the permittivity $\vare$,
\eqref{eq:slave} is the way the slave variables are defined, which has the advantage that guarantees that
the field stays within the physical bounds. Any other definition with this property is equally suitable
for our purposes.
\end{rem}

We model the droplet and surrounding medium as an incompressible Newtonian viscous two-phase fluid,
so that its behavior is governed by the variable density incompressible Navier Stokes equations.
The equation of conservation of momentum can be written in several forms. We chose the one proposed
by Guermond and Quartapelle (\cite{MR2002i:76080}, see also
\cite{MR2726065,Shen_Xiaofeng_2010}) because its nonlinear term possesses a skew symmetry property
similar to the constant density Navier Stokes equations,
\begin{subequations}\label{eq:NSE}
\begin{align}
\label{eq:NSE1}
  \sigma(\sigma\ue)_t + \left( \rho\ue\SCAL\GRAD\ue \right) + \frac12 \DIV(\rho\ue )\ue
  - \DIV\left(\eta \bS(\ue) \right) + \GRAD \pe &= \bF, \\
  \DIV \ue &= 0,
\label{eq:NSE2}
\end{align}
\end{subequations}
where $\sigma = \sqrt\rho$ and $\rho$ is the density of the fluid and depends on the phase field;
$\ue$ is the velocity of the fluid;
$\pe$ is the pressure; $\eta$ is the viscosity of the fluid and depends on $\phi$;
$\bS(\ue) = \tfrac12( \GRAD \ue + \GRAD \ue^\intercal )$ 
is the symmetric part of the gradient
and $\bF$ are the external forces acting on the fluid.

The phase field can be thought of as a scalar that is convected by the flow. Hence its motion
is described by
\begin{equation}
  \phi_t + \DIV( \phi \ue ) = -\DIV\bJ_\phi,
\label{eq:phi}
\end{equation}
for some flux field $\bJ_\phi$ which will be found later.

To model the interaction between the applied voltage and the fluid we introduce the charge density $q$.
Another possibility, not explored here, is to introduce ion concentrations, thus leading to a Nernst Planck
Poisson-like system,
see \cite{fontelos:527,MR2535842,MR2666654,MR2471611}.
The electric displacement field $\bD$ is defined in $\Omega^\star$. The evolution of these two quantities is
governed by Maxwell's equations, \ie
\begin{equation}
 \DIV\bD = q, \qquad \bD_t + q\ue + \bJ_\bD =0,
\label{eq:DMaxwell}
\end{equation}
for some flux $\bJ_\bD$. Notice that we assume that the magnitude of the velocity of the fluid is 
negligible in comparison with the speed of light, and that the frequency of voltage actuation 
is sufficiently small, so that magnetic effects can be ignored. Taking the time derivative of the
first equation and substituting in the second we obtain
\begin{equation}
  q_t + \DIV(q\ue) = -\DIV \bJ_\bD.
\label{eq:qD}
\end{equation}

To close the system, we must prescribe boundary conditions, determine the force $\bF$ exerted
on the fluid, and find constitutive relations for the fluxes $\bJ_\phi$ and $\bJ_\bD$. We are
assuming the solid walls are impermeable, therefore if $\bn$ is the normal to $\Gamma$,
$\ue\SCAL\bn = 0$ on $\Gamma$ and $\bJ_\flat \SCAL \bn = 0$ for any flux $\bJ_\flat$. To find the
rest of the boundary conditions, $\bF$ and relations for the fluxes, we denote the surface tension
between the two phases by $\gamma$ and define the Ginzburg-Landau double well potential by
\[
  \calW(\xi) = \begin{dcases}
                  (\xi + 1 )^2, & \xi < -1, \\
                  \frac14\left( 1 - \xi^2 \right)^2, & |\xi| \leq 1, \\
                  (\xi - 1 )^2, & \xi > 1.
                \end{dcases}
\]
\begin{rem}[The Ginzburg Landau potential]
The original definition, given by Cahn and Hilliard, of the potential is logarithmic. See, for instance,
\cite{Gomez20115310}. This way, the potential becomes infinite if the phase field variable is out of
the range $[-1,1]$, thus guaranteeing that the phase field variable $\phi$ stays within that range.
This is difficult to treat both in the analysis and numerics and hence
practitioners have used the Ginzburg-Landau potential $c(1-\xi^2)^2$, for some $c>0$. We go one step
further and restrict the growth of the potential to quadratic away from the range of interest. With this
restriction Caffarelli and M\"uller, \cite{MR1367359}, have shown uniform $L^\infty$-bounds on the solutions
of the Cahn Hilliard equations (which as we will see below the phase field must satisfy). This has also
proved useful in the numerical discretization of the Cahn Hilliard and Cahn Hilliard Navier
Stokes equations, see \cite{MR2679727,MR2726065,SalgadoMCL}.
\end{rem}

Finally, we introduce the interface energy density function, which describes the energy due to the 
fluid-solid interaction.
Let $\theta_s$ be the contact angle that, at equilibrium, the interface between the two fluids makes with
respect to the solid walls (see \cite{MR2261865,MR2498521,SalgadoMCL}) and define
\[
  \Theta_{fs}(\phi) = \frac{\cos\theta_s}2 \sin\left( \frac{\pi\phi}2 \right).
\]
Then, up to a constant, the interfacial energy density equals $\gamma \Theta_{fs}(\phi)$.

Let us write the free energy of the system
\begin{equation}
  \frakE = \gamma \int_\Omega \left( \frac\delta2 |\GRAD\phi|^2 + \frac1\delta \calW(\phi) \right)
         + \gamma \int_\Gamma \Theta_{fs}(\phi) + \frac12 \int_\Omega \frac1{\vare(\phi)} |\bD|^2
         + \frac12 \int_\Omega \rho(\phi) |\ue|^2 + \frac\lambda2\int_\Omega q^2,
\label{eq:energy}
\end{equation}
where $\vare$ is the electric permittivity of the medium and $\lambda>0$ is a regularization parameter.
Computing the variation of the energy $\frakE$ with respect to $\phi$, while keeping all the other
arguments fixed, we obtain that
\[
  \langle D_\phi \frakE, \bar\phi \rangle = \int_\Omega \mu \bar\phi + \int_\Gamma L \bar\phi,
\]
where $\mu$ is the so-called chemical potential which, in this situation, is given by
\begin{equation}
  \mu = \gamma \left( \frac1\delta \calW'(\phi) - \delta \LAP \phi \right)
      - \frac{ \vare'(\phi) }{2 \vare(\phi)^2 } |\bD|^2 + \frac12 \rho'(\phi) |\ue|^2.
\label{eq:mu}
\end{equation}
The quantity $L$ is given by
\begin{equation}
  L = \gamma \left( \Theta_{fs}'(\phi) + \delta \partial_\bn \phi \right),
\label{eq:L}
\end{equation}
and can be regarded as a ``chemical potential'' on the boundary.
\begin{rem}[Chemical potential]
From the definition of the chemical potential $\mu$ we see that the product $\mu\GRAD\phi$ includes
the usual terms that define the surface tension, \ie
\[
  \gamma \left( \frac1\delta \calW'(\phi) - \delta \LAP \phi \right)\GRAD \phi.
\]
Additionally, it has the term
\[
  -\frac{ \vare'(\phi) }{ 2 \vare(\phi)^2 } |\bD|^2 \GRAD \phi,
\]
which, in some sense, can be thought of as coming from the Maxwell stress tensor.
\end{rem}

With this notation, let us take the time derivative of the free energy:
\[
  \frac{\diff \frakE}{\diff t} = \int_\Omega \mu \phi_t + \int_\Gamma L \phi_t 
    + \int_\Omega \bE \SCAL \bD_t
    + \int_\Omega \rho(\phi) \ue\SCAL\ue_t + \lambda \int_\Omega q q_t,
\]
where $\bE$ is the electric field, defined as $\bE := \vare^{-1}\bD$.
Let us rewrite each one of the terms in this expression.
Using \eqref{eq:phi} and the impermeability conditions,
\[
  \int_\Omega \mu \phi_t = - \int_\Omega \mu \DIV\left( \phi \ue + \bJ_\phi \right) =
      \int_\Omega \GRAD \mu \SCAL \left( \phi \ue + \bJ_\phi \right).
\]
Using \eqref{eq:DMaxwell}
\[
  \int_\Omega \bE \SCAL \bD_t = - \int_\Omega \bE \SCAL \left(  q \ue + \bJ_\bD \right).
\]
For the boundary term, we introduce the material derivative at the boundary
$\dot\phi = \phi_t + \ue_\btau \partial_\btau \phi$ and rewrite
\[
  \int_\Gamma L \phi_t = \int_\Gamma L (\dot\phi - \ue_\btau \partial_\btau \phi).
\]
Notice that $ \sigma(\sigma\ue)_t = \rho \ue_t + \tfrac12 \rho_t \ue$, so that
using \eqref{eq:NSE}, and integrating by parts, we obtain
\[
  \int_\Omega \rho(\phi) \ue \SCAL \ue_t = \int_\Omega \bF \SCAL \ue - \frac12 \int_\Omega \rho(\phi)_t |\ue|^2
        + \int_\Gamma \eta \left(\bS(\ue) \SCAL \bn \right) \SCAL \ue_\btau - \int_\Omega \eta |\bS(\ue)|^2.
\]
Finally, using \eqref{eq:qD} and the impermeability condition
$(q\ue + \bJ_\bD)\SCAL \bn|_\Gamma = 0$,
\[
  \lambda \int_\Omega q q_t = -\lambda \int_\Omega q \DIV\left( q \ue + \bJ_\bD \right) =
      \lambda \int_\Omega \GRAD q \SCAL \left( q \ue + \bJ_\bD\right).
\]

With the help of these calculations, we find that the time-derivative of the free energy can be rewritten as
\begin{equation}
\label{eq:dotE}
\begin{aligned}
  \dot \frakE &= - \int_\Omega \mu \GRAD \phi \SCAL \ue
              + \int_\Omega \bJ_\phi \SCAL \GRAD \mu
              + \int_\Gamma L (\dot\phi - \ue_\btau \partial_\btau \phi)
              - \int_\Omega \bE \SCAL \left( q\ue + \bJ_\bD \right)
              + \int_\Omega \bF \SCAL \ue \\
              &- \frac12 \int_\Omega \rho'(\phi)\phi_t |\ue|^2
              + \int_\Gamma \eta \left(\bS(\ue) \SCAL \bn \right) \SCAL \ue_\btau
              - \int_\Omega \eta |\bS(\ue)|^2
              + \frac\lambda2 \int_\Omega \ue \SCAL \GRAD\left( q^2 \right)
              + \lambda \int_\Omega \GRAD q \SCAL \bJ_\bD.
\end{aligned}
\end{equation}

From \eqref{eq:dotE}, we can identify the power of the system, \ie the time derivative of 
the work $\frakW$ of
internal forces, upon collecting all terms having a scalar product with the velocity $\ue$,
\[
  \dot \frakW = \int_\Omega \bF \SCAL \ue - \int_\Omega \mu \GRAD \phi \SCAL \ue - \int_\Omega q \bE \SCAL \ue
              + \frac\lambda2 \GRAD(q^2) \SCAL \ue - \frac12 \int_\Omega \rho'(\phi)\phi_t \ue\SCAL\ue.
\]
We assume that the system is closed, \ie there are no external forces. This implies that
$\dot\frakW \equiv 0$ and we obtain an expression for the forces $\bF$ acting on the fluid,
\[
  \bF = \mu \GRAD \phi + q \bE + \frac12 \rho'(\phi)\phi_t \ue - \GRAD\left( \frac\lambda2 q^2 \right).
\]

Using the first law of thermodynamics
\[
  \frac{\diff \frakE}{\diff t} = \frac{\diff \frakW}{\diff t} - \calT \frac{\diff \frakS}{\diff t},
\]
where the absolute temperature is denoted by $\calT$ and the entropy by $\frakS$, we can conclude that
\[
  \calT \dot \frakS = \int_\Omega \eta |\bS(\ue)|^2 - \int_\Omega \bE \SCAL \bJ_\bD
                    + \int_\Omega \bJ_\phi \SCAL \GRAD \mu + \lambda \int_\Omega \GRAD q \SCAL \bJ_\bD
                    + \int_\Gamma \eta \left(\bS(\ue) \SCAL \bn \right) \SCAL \ue_\btau
                    + \int_\Gamma L (\dot\phi - \ue_\btau \partial_\btau \phi).
\]

To find an expression for the fluxes we introduce, in the spirit of Onsager \cite{onsager33,MR2261865},
a dissipation function $\Phi$. Since this must be a positive definite function on the fluxes,
the simplest possible expression for a dissipation function is quadratic and diagonal in the fluxes, \eg
\[
  \Phi = \frac12 \int_\Omega \frac1M |\bJ_\phi|^2 + \frac\alpha2 \int_\Gamma {\dot\phi}^2
      + \frac12 \int_\Omega \frac1K |\bJ_\bD|^2 + \frac12 \int_\Gamma \beta |\ue_\btau|^2,
\]
where all the proportionality constants, in principle, can depend on the phase $\phi$. Here, $M$ is
known as the mobility, $K$ the conductivity and $\beta$ the slip coefficient.
Using Onsager's relation
\[
  \left \langle D_\bJ \left(  \dot\frakE(\bJ) + \Phi(\bJ) \right) , \bar\bJ\right\rangle = 0,
  \quad \forall \bar\bJ,
\]
and \eqref{eq:dotE}, we find that
\begin{equation}
\label{eq:fluxes}
  \bJ_\phi = -M \GRAD \mu, \
  \bJ_\phi = -M \GRAD \mu, \
  \bJ_\bD = K \left( \bE - \lambda \GRAD q \right), \
  \beta \ue_\btau = -\eta \bS(\ue)_{\bn\btau} + L \partial_\btau \phi, \
  \alpha\dot\phi = - L,
\end{equation}
where $\bS(\ue)_{\bn\btau} := (\bS(\ue)\SCAL\bn)_\btau$.

\begin{rem}[Constitutive relations]
Definitions \eqref{eq:fluxes} can also be obtained by simply saying that the constitutive 
relations of the fluxes depend 
linearly on the gradients, which is implicitly postulated in the form of the dissipation
function $\Phi$.
\end{rem}

Since, in practical settings, there is an externally applied voltage (which is going to act as the
control mechanism) we introduce a potential $V$ and then the electric field is given by
$\bE = -\GRAD V$ with $V = V_0$ on $\partial^\star\Omega^\star$, where $V_0$ is the voltage applied.

To summarize, we obtain the following system of equations for the phase variable $\phi$ and the
chemical potential $\mu$,
\begin{equation}
  \begin{dcases}
    \phi_t + \ue \SCAL \GRAD \phi = \DIV( M(\phi) \GRAD \mu ), & \text{in }\Omega, \\
    \mu = \gamma \left( \frac1\delta \calW'(\phi) - \delta \LAP \phi \right)
    - \frac12\vare'(\phi)|\GRAD V|^2 + \frac12 \rho'(\phi)|\ue|^2, & \text{in } \Omega, \\
    \alpha\left( \phi_t + \ue_\btau \partial_\btau \phi \right)
          + \gamma\left( \Theta_{fs}'(\phi) + \delta \partial_\bn \phi \right) = 0,\
    M(\phi) \partial_n \mu = 0, & \text{on }\Gamma,
  \end{dcases}
\label{eq:phase}
\end{equation}
and the velocity $\ue$ and pressure $\pe$,
\begin{equation}
  \begin{dcases}
    \frac{ D(\rho(\phi)\ue) }{Dt} - \DIV\left( \eta(\phi) \bS(\ue) \right) + \GRAD \pe = \mu \GRAD \phi
      - q \GRAD \left( V + \lambda q \right)
        + \frac12 \rho'(\phi)\phi_t \ue & \text{on }\Omega, \\
    \DIV \ue =0, & \text{in } \Omega, \\
    \ue\SCAL\bn = 0, & \text{on }\Gamma, \\
    \beta(\phi) \ue_\btau + \eta(\phi) \bS(\ue)_{\bn\btau}
        = \gamma\left( \Theta_{fs}'(\phi) + \delta \partial_\bn \phi \right) \partial_\btau \phi,
        & \text{on }\Gamma,
  \end{dcases}
\label{eq:vel}
\end{equation}
where we have set
\[
  \frac{D(\rho(\phi)\ue)}{Dt} :=
  \sigma(\phi)(\sigma(\phi) \ue)_t + \rho(\phi) \ue\ADV \ue + \tfrac12 \DIV(\rho(\phi) \ue)\ue.
\]
In addition, we have the equation for the electric charges $q$,
\begin{equation}
  \begin{dcases}
    q_t + \DIV (q \ue ) = \DIV \left[K(\phi) \GRAD\left( \lambda  q + V \right)\right], & \text{in }\Omega, \\
    K(\phi) \GRAD\left( \lambda q + V \right) \SCAL \bn = 0, & \text{on }\Gamma,
  \end{dcases}
\label{eq:charge}
\end{equation}
and voltage $V$,
\begin{equation}
  \begin{dcases}
    - \DIV\left( \vare^\star(\phi) \GRAD V \right) = q \chi_\Omega,
      & \text{in }\Omega^\star, \\
    V = V_0, & \text{on }\partial^\star\Omega^\star, \\
    \partial_\bn V = 0, & \text{on }\partial\Omega^\star \cap \Gamma,
  \end{dcases}
\label{eq:potential}
\end{equation}
where
\[
  \vare^\star(\phi) =
    \begin{dcases}
      \vare(\phi), & \Omega, \\
      \vare_D, & \Omega^\star \setminus \Omega,
    \end{dcases}
\]
with $\vare_D$ being the value of the permittivity on the dielectric plates 
$\Omega^\star \setminus \Omega$,
so $\vare_D$ is constant there.

\begin{rem}[Generalized Navier Boundary condition]
In \eqref{eq:vel}, the boundary condition for the tangential velocity is known as the generalized Navier
boundary condition (GNBC), and it is aimed at resolving the so-called contact line paradox of the movement
of a two phase fluid on a solid wall.
The reader is referred to, for instance, \cite{QianCiCP,MR2261865,MR2498521} for a discussion of
its derivation.
Although there has been a lot of discussion and controversy around the validity of this boundary condition,
see for instance \cite{Buscaglia20113011,MR2455379}, we shall take the GNBC as a given and will not
discuss its applicability and/or consequences here.
\end{rem}

\subsection{Nondimensionalization}
\label{sub:non-dim}

\begin{table}[h]
  \caption{Physical Parameters at standard temperature (25$^\circ$ C) and pressure (1 bar),
  taken from \cite{CRC_Book:2002}.
  A Farad (F) is $\mathrm{C}^2 / \mathrm{J}$.  For drinking water, $K$ is $5\cdot10^{-4}$ to $5\cdot10^{-2}$.}
  \begin{centering}
  \begin{tabular}{|c||c|}
    \hline
    \textbf{Parameter} & \textbf{Value} \\
    \hline\hline
    Surface Tension $\gamma$ & (air/water) 0.07199 $\mathrm{J} / \mathrm{m}^{2}$ \\
    \hline
    Dynamic Viscosity $\etascale$ & (water) $8.68\cdot10^{-4}$,
    (air) $1.84\cdot10^{-5}$ $\;\mathrm{Kg} / \mathrm{m\cdot s}$ \\ \hline

    Density $\rhoscale$ & (water) 996.93, (air) 1.1839 $\;\mathrm{Kg} / \mathrm{m}^{3}$ \\ \hline

    Length Scale (Channel Height) $\Lscale$ & $50\cdot10^{-6}$ to $100\cdot10^{-6}$ m \\ \hline

    Velocity Scale $\uscale$ & 0.001 to 0.05 $\mathrm{m} / \mathrm{s}$ \\ \hline

    Voltage Scale $\Vscale$ & 10 to 50 Volts \\ \hline

    Permittivity of Vacuum $\epsvac$ & $8.854\cdot10^{-12}$ $~\mathrm{F} / \mathrm{m}$ \\ \hline

    Permittivity $\epsscale$ & (water) $78.36 \cdot \epsvac$, (air) $1.0 \cdot \epsvac$ \\ \hline

    Charge (Regularization) Parameter $\lambda$ & 0.5 $~\mathrm{J} \cdot \mathrm{m}^{3} / \mathrm{C}^2$ \\ \hline

    Mobility $\Mscale$ & 0.01 $\; \mathrm{m}^5 / (\mathrm{J} \cdot \mathrm{s}) $ \\ \hline

    Phase Field Parameter $\alpha$ & 0.001 $\; \mathrm{J} \cdot \mathrm{s} / \mathrm{m}^2 $ \\ \hline

    Electrical Conductivity $\Kscale$ & (deionized water) $5.5\cdot10^{-6}$,  \\
                                & (air) $\approx$ 0.0
                                $\; \mathrm{C}^2 / (\mathrm{J} \cdot \mathrm{m} \cdot \mathrm{s}) \equiv$
                                Amp$/ (\mathrm{Volt} \cdot \mathrm{m})$ \\
    \hline\hline
  \end{tabular}
  \end{centering}
\label{tbl:Physical_Parameters}
\end{table}

Here we present appropriate scalings so that we may write equations \eqref{eq:phase}--\eqref{eq:potential}
in non-dimensional form. Table~\ref{tbl:Physical_Parameters} shows some typical values
for the material parameters appearing in the model. Consider the following scalings:
\begin{align*}
  \tilde{\rho} &= \rho / \rhoscale \text{ (choose $\rhoscale$)},
  & \tilde{\eta} &= \eta / \etascale \text{ (choose $\etascale$)},
  &\tilde{\beta} &= \beta / \betascale,
  & \betascale &= \etascale / \Lscale, \\
  \tilde{\pe}  &= \pe / \pscale,
  &\pscale &= \rhoscale \uscale^2,
  &\tilde{\ue}  &= \ue / \uscale \text{ (choose $\uscale$)},
  &\tilde{\bx}  &= \bx / \Lscale \text{ (choose $\Lscale$)}, \\
  \tilde{t} &= t / \tscale,
  &\tscale &= \Lscale / \uscale,
  &\tilde{\mu} &= \mu / \muscale,
  &\muscale &= \gamma / \Lscale, \\
  \tilde{q} &= q / \qscale,
  &\qscale &= \Vscale / \lambda,
  &\tilde{V} &= V / \Vscale, \text{ (choose $\Vscale$)},
  & \tilde{\vare} &= \vare / \epsscale, \\
  \tilde{\delta} &= \delta / \Lscale,
  &\widetilde{M} &= M / \Mscale,
  &\widetilde{K} &= K / \Kscale,
  &\CAP &= \frac{\etascale \uscale}{\gamma}, \\
  \REY &= \frac{\rhoscale \uscale \Lscale}{\etascale},
  &\WEB &= \frac{\rhoscale \uscale^2 \Lscale}{\gamma},
  &\BOEW &= \frac{\epsscale \Vscale^2}{\Lscale \gamma},
  &\IE &= \frac{\rhoscale \uscale^2}{\qscale \Vscale}, \\
  \STPH &= \frac{\gamma}{\alpha / \tscale},
  &\MO &= \frac{\gamma \Mscale}{\Lscale^2 \uscale},
  &\KO &= \frac{\Vscale \Kscale}{\Lscale \qscale \uscale},
  &\CH &= \frac{\qscale \Lscale^2}{\Vscale \epsscale}, \\
 \end{align*}
where $\CAP$ is the capillary number, $\REY$ is the Reynolds number, $\WEB$ is the Weber 
number, $\BOEW$ is the electro-wetting Bond number, $\IE$ is the ratio of fluid forces 
to electrical forces, $\STPH$ is the ratio of surface tension to ``phase field forces,'' 
$\MO$ is a (non-dimensional) mobility coefficient, $\KO$ is a conductivity coefficient, and 
$\CH$ is an electric charge coefficient.

Let us now make the change of variables. To simplify notation, we drop the tildes,
and consider all variables and differential operators as non-dimensional. The fluid equations read:
\begin{equation*}
  \begin{dcases}
    \frac{ D(\rho \ue) }{Dt}
    -\frac{1}{\REY} \DIV\left( \eta \bS(\ue) \right)
    +\GRAD \pe =
    \frac{1}{\WEB} \mu \GRAD \phi
    - \frac{1}{\IE} q \GRAD \left( V + q \right)
    + \frac12 \rho'(\phi)\phi_t \ue,
      & \text{in }\Omega, \\
    \DIV \ue =0, & \text{in } \Omega, \\
    \ue\SCAL\bn = 0, & \text{on }\Gamma, \\
    \beta \ue_\btau
    + \eta \bS(\ue)_{\bn\btau} =
    \frac1{\CAP} \left( \Theta_{fs}'(\phi) + \delta \partial_\bn \phi \right) \partial_\btau \phi,
        & \text{on }\Gamma.
  \end{dcases}
\end{equation*}
The phase-field equations change to (again dropping the tilde)
\begin{equation*}
  \begin{dcases}
    \phi_t + \ue \SCAL \GRAD \phi = \MO \DIV( M(\phi)\GRAD \mu),
      & \text{in }\Omega, \\
    \mu = \left( \frac1\delta \calW'(\phi) - \delta \LAP \phi \right)
    -\BOEW \frac12 \vare'(\phi)|\GRAD V|^2
    + \WEB \frac12 \rho'(\phi)|\ue|^2, & \text{in } \Omega, \\
    \phi_t + \ue_\btau \partial_\btau \phi
    + \STPH \left(
      \Theta_{fs}'(\phi) + \delta \partial_\bn \phi \right) = 0,
    \
    \partial_n \mu = 0, & \text{on }\Gamma.
  \end{dcases}
\end{equation*}
Performing the change of variables on the charge transport equation gives
\begin{equation*}
  \begin{dcases}
    q_t + \DIV (q \ue )
    = \KO \DIV \left(K(\phi)\GRAD\left(q + V \right)\right),
      & \text{in }\Omega, \\
    \bn \SCAL \GRAD\left(q + V \right) = 0, & \text{on }\Gamma.
  \end{dcases}
\end{equation*}
Lastly, for the electrostatic equation we obtain
\begin{equation*}
  \begin{dcases}
    - \DIV\left( (\phi) \GRAD V \right) =
    \CH q \chi_\Omega,
      & \text{in }\Omega^\star, \\
    V = V_0 / \Vscale, & \text{on }\partial^\star\Omega^\star, \\
    \partial_\bn V = 0, & \text{on }\partial\Omega^\star \cap \Gamma.
  \end{dcases}
\end{equation*}
where $\vare^\star(\phi)$ has been normalized by $\epsscale$.

To alleviate the notation, for the rest of our discussion we will set all the nondimensional groups
($\CAP$, $\REY$, $\WEB$, $\BOEW$, $\IE$, $\STPH$, $\MO$, $\KO$ and $\CH$)
to one. If needed, the dependence of the constants on all these parameters can be traced by following our
arguments.
Moreover, we must note that
if a simplification of this model is desired, then these scalings must serve as a guide to
decide which effects are dominant.

\subsection{Tangential Derivatives at the Boundary}
\label{sub:tangentialderiv}
As we can see from \eqref{eq:phase} and \eqref{eq:vel}, our model incorporates tangential derivatives of the
phase variable $\phi$ at the boundary $\Gamma$. Unfortunately, in the analysis, we are not capable of dealing
with these terms. Therefore, we propose some simplifications.

The first possible simplification is simply to ignore the terms that contain this tangential derivative; see
\cite{MR2511642}. However, it is our feeling that the presence of them is important, specially in dealing
with the contact angle in the GNBC.

A second possibility would be to add an \emph{ad hoc} term of the form $\LAP_\Gamma \phi$ on the
boundary condition
for the phase variable, where by $\LAP_\Gamma$ we denote the Laplace-Beltrami operator on $\Gamma$.
A similar approach has
been followed, in a somewhat different context, for instance, by Pr\"uss \etal \cite{MR2230586}
and Cherfils \etal \cite{MR2629535}. However, this condition might lead to lack of conservation of $\phi$,
which is an important feature of phase field models based on the Cahn Hilliard equation.

Finally, the approach that we propose is to recall that, in principle, the phase field variable must be
constant in the bulk of each one of the phases and so $\partial_\btau \phi \approx 0$ there. Moreover,
in the sharp interface limit this tangential derivative must be a Dirac measure supported on the interface.
Therefore we define a function
\begin{equation}
  \psi(\phi) = \frac{1}{\Lscale} \frac1\delta e^{-\frac{\phi^2}{2\delta}},
  \quad \text{where } \delta \text{ is non-dimensional},
\label{eq:defpsi}
\end{equation}
and replace all the instances of $\partial_\btau \phi$ by $\psi(\phi)$.

\subsection{Contact Line Pinning}
\label{sub:pinning}
Simply put, the contact line pinning (hysteresis) is a frictional effect that occurs at the three-phase
contact line, and is rather controversial. We refer the reader to \cite{walker:102103,MR2595379} 
for an explanation about its origins
and possible dependences. Let us here only mention that, macroscopically, the pinning force has a
threshold value and, thus, it should depend on the stress at the contact line.
It is important to take into account contact line pinning since, as observed in
\cite{walker:102103,MR2595379}, it is crucial  for capturing the true time scales of the problem.

We propose a phenomenological approach to deal with this effect. From the GNBC,
\[
  \beta \ue_\btau + \eta \bS(\ue)_{\bn\btau}
  = \gamma\left( \Theta_{fs}'(\phi) + \delta \partial_\bn \phi \right)\psi(\phi),
\]
we can see that, to recover no-slip conditions, one must set the slip coefficient $\beta$ sufficiently large.
On the contrary, when $\beta$ is small, one obtains an approximation of full slip conditions.
A simple dimensional argument then shows that $\beta = \eta\ell$, where $\ell$ has
the dimensions of inverse length.
Therefore, we propose the slip coefficient to have the following form
\[
  \beta = \eta(\phi) \ell(\phi,\bS),
\]
where
\[
  \ell(\phi,\bS) = \frac{1}{\Lscale}
  \begin{dcases}
    \frac1\delta, & |\phi|>\frac12, \\
    \frac1\delta, & |\phi| \leq \frac12, \text{ and } |\bS(\ue)_{\bn\btau}| \ll T_p, \\
    1, & |\phi| \leq \frac12, \text{ and } |\bS(\ue)_{\bn\btau}| \approx T_p, \\
    \delta, & |\phi| \leq \frac12, \text{ and } |\bS(\ue)_{\bn\btau}| \gg T_p,
  \end{dcases}
\]
where $\delta$ is the non-dimensional transition length.  For the purposes of analysis, 
we face the same difficulties in this expression as in \S\ref{sub:tangentialderiv}.  Ergo,
we will use this to model pinning in the numerical examples, but leave it out of the analysis.

\section{Formal Weak Formulation and Formal Energy Estimate}
\label{sec:Energy}
In this section we obtain a weak formulation for problem \eqref{eq:phase}--\eqref{eq:potential} and
show a formal energy estimate, which serves as an a priori estimate and the basic relation on which
our existence theory is based.

\subsection{Formal Weak Formulation}
\label{sub:weak}
To obtain a weak formulation of the problem, we begin by multiplying the first equation of
\eqref{eq:phase} by $\bar\phi$, the second by $\bar\mu$ and integrating in $\Omega$.
After integration by parts, taking into account the boundary conditions, we arrive at
\begin{subequations}
\label{eq:phaseweak}
  \begin{equation}
  \label{eq:phaseweak1}
    \scl \phi_t , \bar\phi \scr
    + \scl \ue\SCAL\GRAD\phi, \bar\phi \scr
    + \scl M(\phi) \GRAD\mu, \GRAD\bar\phi \scr =0,
  \end{equation}
\text{and}
  \begin{multline}
    \scl \mu, \bar\mu \scr =
    \frac\gamma\delta \scl \calW'(\phi), \bar\mu \scr
    + \gamma \delta \scl \GRAD\phi, \GRAD\bar\mu \scr
    - \frac12 \scl \vare'(\phi) |\GRAD V|^2, \bar\mu \scr
    + \frac12 \scl \rho'(\phi) |\ue|^2,  \bar \mu \scr \\
    + \alpha \sbl \phi_t + \ue_\btau \psi(\phi), \bar\mu \sbr
    + \gamma \sbl \Theta_{fs}'(\phi), \bar\mu \sbr.
  \label{eq:phaseweak2}
  \end{multline}
\end{subequations}
Multiply the first equation of \eqref{eq:vel} by $\bw$ such that $\bw\SCAL\bn|_\Gamma = 0$, the second
by $\bar p$ and integrate in $\Omega$. Integration by parts on the first equation,
in conjunction with the boundary conditions and \eqref{eq:defpsi}, yields
\begin{align*}
  -\scl \DIV (\eta(\phi) \bS(\ue)), \bw \scr &=
  \scl \eta(\phi) \bS(\ue),\bS(\bw) \scr
  - \sbl \eta(\phi) \bS(\ue)_\bn, \bw_\btau  \sbr \\
  & = \scl \eta(\phi) \bS(\ue), \bS(\bw) \scr
  + \sbl \beta(\phi) \ue_\btau,  \bw_\btau \sbr
  - \gamma \sbl \Theta_{fs}'(\phi) + \delta\partial_\bn \phi, \bw_\btau \psi(\phi) \sbr \\
  &= \scl \eta(\phi) \bS(\ue),\bS(\bw) \scr
  + \sbl \beta(\phi) \ue_\btau,  \bw_\btau \sbr
  + \alpha \sbl \phi_t + \ue_\btau \psi(\phi), \bw_\btau \psi(\phi) \sbr,
\end{align*}
where we used the third equation of \eqref{eq:phase}. With these manipulations we obtain
\begin{subequations}
\label{eq:NSEweak}
  \begin{multline}
  \label{eq:velweak}
    \scl \frac{ D( \rho(\phi) \ue ) }{Dt}, \bw \scr + \scl \eta(\phi) \bS(\ue), \bS(\bw) \scr
    - \scl \pe, \DIV \bw \scr
    + \sbl \beta(\phi) \ue_\btau, \bw_\btau \sbr
    + \alpha \sbl \ue_\btau \psi(\phi), \bw_\btau \psi(\phi) \sbr \\
    = \scl \mu \GRAD\phi, \bw \scr
    - \scl q \GRAD(\lambda q + V), \bw \scr
    + \frac12 \scl \rho'(\phi)\phi_t \ue, \bw \scr
    - \alpha \sbl \phi_t \psi(\phi), \bw_\btau \sbr,
  \end{multline}
\text{for all $\bw$, and }
  \begin{equation}
      \scl \bar p, \DIV \ue \scr = 0,
  \label{eq:incweak}
  \end{equation}
\end{subequations}
for all $\bar p$. Multiply \eqref{eq:charge} by $r$ and integrate in $\Omega$ to get
\begin{equation}
\label{eq:chargeweak}
  \scl q_t,  r \scr - \scl q\ue, \GRAD r \scr + \scl K(\phi) \GRAD(\lambda q + V ), \GRAD r \scr = 0.
\end{equation}
Let $W$ be a function that equals zero on $\partial^\star \Omega^\star$.
Multiply the equation for the electric potential \eqref{eq:potential} by $W$, integrate in $\Omega^\star$
to obtain
\begin{equation}
\label{eq:potentialweak}
  \scl \vare^\star(\phi) \GRAD V,  \GRAD W \scr_{\Omega^\star}= \scl q, W \scr.
\end{equation}

Given the way the model has been derived, it is clear that an energy estimate must exist. Before we obtain it
let us show a comparison result \emph{\`a la} Gr\"onwall.

\begin{lem}[Gr\"onwall]
\label{lem:mod-gronwall}
Let $f,g,h,w:[0,T] \rightarrow \Real$ be measurable and positive functions such that
\begin{equation}
\label{eq:gR}
  f(t)^2 + \int_0^t g(s)\diff s \leq h(t) + \int_0^t f(s)w(s)\diff s, \quad \forall t\in[0,T].
\end{equation}
Then
\[
  \sup_{s\in[0,T]}f(s)^2 + \frac12 \int_0^T g(s) \diff s \leq 4 \sup_{s\in[0,T]}h(s) 
  + 4T \int_0^T w^2(s) \diff s,
  \quad \forall t\in[0,T]
\]
\end{lem}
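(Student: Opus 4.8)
The plan is to exploit the positivity hypotheses together with a running-maximum device, since the real obstruction to a direct estimate is that the integral term on the right of \eqref{eq:gR} is only \emph{linear} in $f$ (it is $\int_0^t f w$, not $\int_0^t f^2 w$), so the classical exponential Gr\"onwall lemma does not apply. First I would fix $t\in[0,T]$ and, for every $\tau\le t$, discard the nonnegative term $\int_0^\tau g$ from the left of \eqref{eq:gR} and enlarge the domain of integration on the right, which is legitimate because $f,w\ge0$ makes $\tau\mapsto\int_0^\tau f w$ nondecreasing. This yields
\[
  f(\tau)^2 \le \sup_{s\le t} h(s) + \int_0^t f(s)w(s)\diff s, \qquad \tau\le t.
\]
Introducing the nondecreasing quantity $M(t):=\sup_{\tau\le t} f(\tau)$ and taking the supremum over $\tau\le t$ gives $M(t)^2 \le \sup_{s\le t}h(s) + \int_0^t f(s)w(s)\diff s$.

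Next I would estimate the cross term by $\int_0^t f w \le M(t)\int_0^t w$ and absorb the resulting quadratic contribution by Young's inequality, $M(t)\int_0^t w \le \tfrac12 M(t)^2 + \tfrac12(\int_0^t w)^2$. Moving $\tfrac12 M(t)^2$ to the left then linearizes the problem into the algebraic bound $M(t)^2 \le 2\sup_{s\le t}h + (\int_0^t w)^2$, and Cauchy--Schwarz converts the last term, $(\int_0^t w)^2 \le t\int_0^t w^2 \le T\int_0^T w^2$. This already controls $\sup_{s\le T} f(s)^2$ with the constants $2$ and $T$.

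To recover the dissipation term I would return to \eqref{eq:gR} at $\tau=t$, where now $\int_0^t g \le \sup_{s\le t} h + M(t)\int_0^t w$, and reuse the same Young and Cauchy--Schwarz bounds together with the estimate on $M(t)^2$ just obtained; this gives $\int_0^t g \le 2\sup_{s\le t}h + T\int_0^T w^2$. Adding the bound for $\sup f^2$ to one half of the bound for $\int_0^T g$ and collecting constants produces the asserted inequality — in fact with the slightly sharper constants $3$ and $\tfrac32 T$, so the stated $4$ and $4T$ hold comfortably. The only point requiring care is the linear-in-$f$ structure noted at the outset: the running maximum $M(t)$ is precisely the device that reduces the problem to a quadratic inequality, after which positivity of $g$ (to drop it at intermediate times) and of $f,w$ (to make the integral monotone) are exactly what legitimize each step; no measurability issue arises beyond that of $M$, which is automatic since $M$ is monotone.
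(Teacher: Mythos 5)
Your proof is correct, and its core strategy is the same as the paper's: linearize the quadratic inequality by bounding $\int_0^t f w \le (\sup f)\int_0^t w$, absorb $\tfrac12(\sup f)^2$ into the left via Young's inequality, and convert $\left(\int_0^T w\right)^2$ into $T\int_0^T w^2$ by Cauchy--Schwarz. However, your execution differs in two ways that make it a genuine improvement on the paper's own write-up. First, the paper evaluates \eqref{eq:gR} at $t_0=\operatorname{argmax}\{f(s):s\in[0,T]\}$, which need not exist for a merely measurable $f$ (one would have to use an essential supremum or a near-maximizing sequence); your running supremum $M(t)=\sup_{\tau\le t}f(\tau)$, monotone hence unproblematic, sidesteps this. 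Second, and more substantively, evaluating only at $t_0$ leaves $\int_0^{t_0}g$ on the left, and since $g\ge 0$ this is bounded \emph{above}, not below, by $\int_0^T g$; the paper's closing remark that one obtains the result by ``taking the supremum on the left'' glosses over how the dissipation integral over all of $[0,T]$ is recovered. Your extra step --- returning to \eqref{eq:gR} at the endpoint and reusing the bound on $M(T)^2$ --- is precisely what fills that gap, and it also yields the sharper constants $3$ and $\tfrac32 T$ in place of $4$ and $4T$. One shared informality worth noting: absorbing $\tfrac12 M(t)^2$ (respectively $\tfrac12 f(t_0)^2$ in the paper) into the left-hand side presupposes that this quantity is finite; this is harmless in the paper's application, where the energies are finite a priori, but a fully rigorous statement would either assume $\sup h<\infty$, $\int_0^T w^2<\infty$ and first deduce boundedness of $f$, or note that the conclusion is vacuous otherwise.
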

\begin{proof}
Take, in \eqref{eq:gR}, $t=t_0$, where
\[
  t_0 = \argmax \left\{ f(s) : s \in [0,T] \right\},
\]
then
\[
  f(t_0)^2 + \int_0^{t_0} g(s) \diff s \leq \max_{s\in[0,T]}h(s) + f(t_0)\int_0^{t_0} w(s) \diff s
  \leq \max_{s\in[0,T]}h(s) + \frac12 f(t_0)^2 + \left( \int_0^T w(s) \diff s \right)^2.
\]
Canceling the common factors, applying the Cauchy Schwarz inequality on the right and taking the supremum
on the left hand side we obtain the result.
\end{proof}
\begin{rem}[Exponential in time estimates]
The main advantage of using Lemma~\ref{lem:mod-gronwall} to obtain \emph{a priori} estimates, as opposed
to a standard argument invoking Gr\"onwall's inequality, is that we can avoid exponential dependence on
the final time $T$.
\end{rem}

The following result provides the formal energy estimate.
\begin{thm}[Stability]
\label{thm:energy}
If there is a solution to \eqref{eq:phase}--\eqref{eq:potential}, then it must satisfy the following estimate
\begin{multline}
  \sup_{s\in(0,T]}\left\{
    \int_\Omega \left[\frac12 \rho(\phi) |\ue|^2 + \frac\lambda4 q^2
    + \gamma\left( \frac\delta2 |\GRAD \phi|^2 + \frac1\delta \calW(\phi)\right) \right]
    + \int_{\Omega^\star} \frac14 \vare^\star(\phi) |\GRAD V|^2
    + \gamma \int_\Gamma \Theta_{fs}(\phi)
  \right\} \\
  + \int_0^T\left\{
    \int_\Omega \left[
      \eta(\phi) |\bS(\ue)|^2 + M(\phi) |\GRAD \mu|^2 + K(\phi) |\GRAD(\lambda q + V )|^2
    \right]
    + \int_\Gamma \left[
      \beta(\phi) |\ue_\btau|^2 + \alpha |\phi_t + \ue_\btau \psi(\phi)|^2
    \right]
  \right\} \leq \\
  \left\{
    \int_\Omega \left[\frac12 \rho(\phi) |\ue|^2 + q^2
      + \gamma\left( \frac\delta2 |\GRAD \phi|^2 + \frac1\delta \calW(\phi)\right)
      + \frac12 |\bar V_0|^2
    \right]
    + \int_{\Omega^\star}\left(
      \vare^\star(\phi) |\GRAD V|^2 + \vare_M |\GRAD \bar V_0|^2
    \right)
  \right. \\
  \left.
    + \gamma\int_\Gamma \Theta_{fs}(\phi)
  \right\}\Big|_{t=0}
  +\sup_{s\in[0,T]}\left\{
    \int_{\Omega^\star}\vare_M |\GRAD \bar V_0|^2
    +\int_\Omega \frac1\lambda |\bar V_0|^2(t)
  \right\} \\
  +cT\int_0^T \left[
    \int_{\Omega^\star} \vare_M |\GRAD \bar V_{0,t}|^2
    + \frac4\lambda \int_\Omega |\bar V_{0,t}|^2
  \right],
\label{eq:Elaw}
\end{multline}
where $c$ does not depend on $T$.
\end{thm}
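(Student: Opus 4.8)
The plan is to run the energy method on the weak formulation \eqref{eq:phaseweak}--\eqref{eq:potentialweak}, choosing the test functions that reproduce, upon pairing, the time derivative of the free energy \eqref{eq:energy}, and then to absorb the time-dependent Dirichlet datum $V_0$ through Lemma~\ref{lem:mod-gronwall}. First I would test \eqref{eq:phaseweak1} with $\bar\phi=\mu$ and \eqref{eq:phaseweak2} with $\bar\mu=\phi_t$ and subtract. The paired terms $\scl\phi_t,\mu\scr$ cancel; the terms $\frac\gamma\delta\scl\calW'(\phi),\phi_t\scr$, $\gamma\delta\scl\GRAD\phi,\GRAD\phi_t\scr$ and $\gamma\sbl\Theta_{fs}'(\phi),\phi_t\sbr$ become $\frac{\diff}{\diff t}$ of $\frac\gamma\delta\int_\Omega\calW(\phi)$, $\frac{\gamma\delta}2\int_\Omega|\GRAD\phi|^2$ and $\gamma\int_\Gamma\Theta_{fs}(\phi)$; and what is left is the mobility dissipation $\scl M(\phi)\GRAD\mu,\GRAD\mu\scr$, the convection $\scl\ue\SCAL\GRAD\phi,\mu\scr$, the electrostatic term $\frac12\scl\vare'(\phi)|\GRAD V|^2,\phi_t\scr$, the kinetic term $-\frac12\scl\rho'(\phi)|\ue|^2,\phi_t\scr$ and the boundary pairing $-\alpha\sbl\phi_t+\ue_\btau\psi(\phi),\phi_t\sbr$.

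Next I would test \eqref{eq:velweak} with $\bw=\ue$ and use \eqref{eq:incweak} to drop the pressure. The Guermond--Quartapelle form gives $\scl\frac{D(\rho(\phi)\ue)}{Dt},\ue\scr=\frac{\diff}{\diff t}\frac12\int_\Omega\rho(\phi)|\ue|^2$, the skew-symmetry of the convective part annihilating the advective contribution and the identity $\sigma(\sigma\ue)_t=\rho\ue_t+\frac12\rho_t\ue$ supplying exactly the density-rate half of the derivative. Adding this momentum balance to the phase balance from the previous step, the force term $\scl\mu\GRAD\phi,\ue\scr$ cancels the convection $\scl\ue\SCAL\GRAD\phi,\mu\scr$; the force term $\frac12\scl\rho'(\phi)\phi_t\ue,\ue\scr$ cancels the kinetic term $-\frac12\scl\rho'(\phi)|\ue|^2,\phi_t\scr$ coming from the $\frac12\rho'(\phi)|\ue|^2$ summand of $\mu$; and the three boundary $\alpha$-pairings combine, via $\sbl\phi_t+\ue_\btau\psi,\phi_t\sbr+\sbl\ue_\btau\psi,\ue_\btau\psi\sbr+\sbl\phi_t\psi,\ue_\btau\sbr$, into the single dissipative square $\alpha\int_\Gamma|\phi_t+\ue_\btau\psi(\phi)|^2$. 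After this step the survivors on the right are $\frac12\scl\vare'(\phi)|\GRAD V|^2,\phi_t\scr$ and the electrostatic forcing $-\scl q\GRAD(\lambda q+V),\ue\scr$, while the viscous $\int_\Omega\eta(\phi)|\bS(\ue)|^2$ and slip $\int_\Gamma\beta(\phi)|\ue_\btau|^2$ dissipations are now in place.

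Then I would test \eqref{eq:chargeweak} with $r=\lambda q+V$, producing $\frac{\diff}{\diff t}\frac\lambda2\int_\Omega q^2$, the conductivity dissipation $\scl K(\phi)\GRAD(\lambda q+V),\GRAD(\lambda q+V)\scr$, a convective term $\scl q\GRAD(\lambda q+V),\ue\scr$ that cancels the electrostatic forcing in the momentum balance, and a leftover $\scl q_t,V\scr$. The crux is to turn $\frac12\scl\vare'(\phi)|\GRAD V|^2,\phi_t\scr-\scl q_t,V\scr$ into $\frac{\diff}{\diff t}$ of the electric energy $\frac12\int_{\Omega^\star}\vare^\star(\phi)|\GRAD V|^2$ modulo boundary forcing, and this is where the time dependence of the applied voltage enters and is the main obstacle. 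I would introduce an extension $\bar V_0$ of the datum with $V-\bar V_0\in\Hunstar$ and combine three facts: the product rule $\frac{\diff}{\diff t}\frac12\int_{\Omega^\star}\vare^\star(\phi)|\GRAD V|^2=\frac12\scl\vare'(\phi)|\GRAD V|^2,\phi_t\scr+\scl\vare^\star(\phi)\GRAD V,\GRAD V_t\scr_{\Omega^\star}$; the identity $\frac{\diff}{\diff t}\scl q,V\scr=\scl q_t,V\scr+\scl q,V_t\scr$; and \eqref{eq:potentialweak} tested first with the admissible $W=V_t-\bar V_{0,t}\in\Hunstar$ and then with $W=V-\bar V_0\in\Hunstar$. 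This converts the electrostatic contributions into $\frac{\diff}{\diff t}$ of the electric energy together with the correction terms $-\scl\vare^\star(\phi)\GRAD V,\GRAD\bar V_0\scr_{\Omega^\star}+\scl q,\bar V_0\scr$, which I fold into the energy, and a genuine forcing $-\scl\vare^\star(\phi)\GRAD V,\GRAD\bar V_{0,t}\scr_{\Omega^\star}+\scl q,\bar V_{0,t}\scr$ on the right; every remaining cross term has already cancelled by the thermodynamically consistent construction of the model.

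Finally I would integrate the resulting identity over $(0,t)$. The non-sign-definite correction terms at the two endpoints are split by Young's inequality: at time $t$ one absorbs $\frac14\int_{\Omega^\star}\vare^\star(\phi)|\GRAD V|^2$ and $\frac\lambda4\int_\Omega q^2$ on the left, which is exactly why those coefficients appear in \eqref{eq:Elaw}, and discards $\int_{\Omega^\star}\vare_M|\GRAD\bar V_0|^2$ and $\frac1\lambda\int_\Omega|\bar V_0|^2$ to the right; at $t=0$ the same splitting yields the initial-data terms with their stated coefficients. The forcing integral $\int_0^t[-\scl\vare^\star(\phi)\GRAD V,\GRAD\bar V_{0,t}\scr_{\Omega^\star}+\scl q,\bar V_{0,t}\scr]$ is then bounded in the Cauchy--Schwarz form $\int_0^t f(s)w(s)\diff s$, with $f$ the square root of the energy and $w\sim(\int_{\Omega^\star}\vare_M|\GRAD\bar V_{0,t}|^2)^{1/2}+(\frac1\lambda\int_\Omega|\bar V_{0,t}|^2)^{1/2}$. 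Taking $f^2$ to be the energy, $g$ the assembled dissipation and $h$ the initial-data plus supremum terms puts the inequality exactly in the form \eqref{eq:gR}, so Lemma~\ref{lem:mod-gronwall} delivers \eqref{eq:Elaw}; the factor $4$ of that lemma accounts for the weight $cT$ on the last forcing integral and for the relation between the coefficients on the two sides, and, being used in place of the classical Gr\"onwall inequality, it is what keeps the bound free of any exponential dependence on $T$.
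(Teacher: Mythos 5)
Your proposal is correct and follows essentially the same path as the paper's proof: the same test functions ($\bar\phi=\mu$, $\bar\mu=\pm\phi_t$, $\bw=\ue$, $r=\lambda q+V$), the same cancellations of the convective, kinetic, and boundary $\alpha$-terms, the same extension $\bar V_0$ of the Dirichlet datum, and the same application of Lemma~\ref{lem:mod-gronwall} with the Young-inequality splitting that produces the coefficients $\tfrac14\vare^\star$ and $\tfrac\lambda4$. The only cosmetic difference is in the electrostatic step: the paper differentiates \eqref{eq:potentialweak} in time and tests with $W=V-\bar V_0$, then integrates by parts in time, whereas you apply the product rule and test \eqref{eq:potentialweak} with $W=V_t-\bar V_{0,t}$ and $W=V-\bar V_0$; the two manipulations are algebraically equivalent and yield identical endpoint and forcing terms.
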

\begin{proof}
We first deal with the Navier Stokes and Cahn Hilliard equations in a way very similar to
Theorem~3.1 of \cite{SalgadoMCL}.
Set $\bw = \ue$ in \eqref{eq:velweak} and notice that
\[
  \scl \frac{ D(\rho\ue)}{Dt}, \ue \scr = \frac12\frac{\diff}{\diff t}\int_\Omega \rho |\ue|^2,
\]
because
\[
  \frac{ D(\rho \ue)}{Dt} = \sigma(\sigma\ue)_t + \rho \ue \ADV \ue + \frac12 \DIV(\rho \ue) \ue.
\]
We obtain
\begin{multline}
  \frac{\diff}{\diff t} \frac12\int_\Omega \rho |\ue|^2 + \int_\Omega \eta |\bS(\ue)|^2
        + \int_\Gamma \beta(\phi) |\ue_\btau|^2 + \alpha \int_\Gamma |\ue_\btau \psi(\phi)|^2
    = \scl \mu \GRAD \phi, \ue \scr \\
    - \scl q \GRAD (\lambda q + V ), \ue \scr
    + \frac12 \scl \rho'(\phi)\phi_t, |\ue|^2 \scr
    - \alpha \sbl \phi_t \ue_\btau, \psi( \phi) \sbr.
\label{eq:velE}
\end{multline}
Set $\bar\phi = \mu$ in \eqref{eq:phaseweak1} to get
\begin{equation}
  \scl \mu, \phi_t \scr + \scl \mu \GRAD \phi, \ue \scr + \int_\Omega M(\phi) |\GRAD \mu|^2 = 0.
\label{eq:phaseE}
\end{equation}
Set $\bar\mu = -\phi_t$ in \eqref{eq:phaseweak2} to write
\begin{multline}
  - \scl \phi_t, \mu \scr
    = - \gamma \frac{\diff}{\diff t}\left[
      \int_\Omega \left (\frac\delta2 |\GRAD \phi|^2 + \frac1\delta \calW(\phi) \right)
      + \int_\Gamma \Theta_{fs}(\phi)
    \right]
    + \frac12 \scl  \vare'(\phi)\phi_t, |\GRAD V|^2 \scr \\
    - \frac12 \scl  \rho'(\phi)\phi_t, |\ue|^2 \scr
    - \alpha \int_\Gamma (\phi_t)^2
    - \alpha \sbl \phi_t, \ue_\btau \psi( \phi) \sbr.
\label{eq:chemE}
\end{multline}
Add \eqref{eq:velE}, \eqref{eq:phaseE} and \eqref{eq:chemE} to arrive at
\begin{multline}
  \frac{\diff}{\diff t}\left[ \int_\Omega  \left(\frac12 \rho(\phi) |\ue|^2 +
          \gamma \left( \frac\delta2 |\GRAD \phi|^2 + \frac1\delta \calW(\phi) \right) \right)
          + \gamma \int_\Gamma \Theta_{fs}(\phi)
                                    \right]
  + \int_\Omega \eta(\phi) |\bS(\ue)|^2 \\
  + \int_\Gamma \beta(\phi) |\ue_\btau|^2
  + \int_\Omega M(\phi) |\GRAD \mu|^2
  +\alpha \int_\Gamma \left( \phi_t + \ue_\btau \psi\left( \phi \right) \right)^2 =
  - \scl q \GRAD (\lambda q + V ), \ue \scr
  + \frac12 \scl \vare'(\phi)\phi_t, |\GRAD V|^2 \scr.
\label{eq:combined}
\end{multline}

We next deal with the electrostatic equations. Set $r = \lambda q + V$ in \eqref{eq:chargeweak}
to get
\begin{equation}
\label{eq:chargeE}
  \frac\lambda2 \frac{\diff}{\diff t}\int_\Omega q^2 + \scl V, q_t \scr
  - \scl q \GRAD ( \lambda q + V ),  \ue \scr + \int_\Omega K(\phi) | \GRAD (\lambda q + V ) |^2 = 0.
\end{equation}
Take the time derivative of \eqref{eq:potentialweak} and set $W=V-\bar V_0$, where by $\bar V_0$ we mean
an extension of $V_0$ to $\Omega^\star$. We obtain
\begin{equation}
  \int_{\Omega^\star} \partial_t \left( \vare^\star(\phi) \right) |\GRAD V|^2
  + \frac12 \int_{\Omega^\star}\vare^\star(\phi) \partial_t \left( |\GRAD V|^2 \right)
  = \scl q_t, V \scr -\scl q_t, \bar V_0 \scr
  + \scl \partial_t(\vare^\star(\phi) \GRAD V), \GRAD \bar V_0 \scr_{\Omega^\star}.
\label{eq:potentialE}
\end{equation}
Add \eqref{eq:combined}, \eqref{eq:chargeE} and \eqref{eq:potentialE}  and recall 
that $\vare^\star(\phi)$ is constant on $\Omega^\star \setminus \Omega$. We thus obtain
\begin{multline*}
  \frac\diff{\diff t}\left\{
    \int_\Omega \left[\frac12 \rho(\phi) |\ue|^2 + \frac\lambda2 q^2
    + \gamma\left( \frac\delta2 |\GRAD \phi|^2 + \frac1\delta \calW(\phi)\right) \right]
    + \int_{\Omega^\star} \frac12 \vare^\star(\phi) |\GRAD V|^2 + \gamma \int_\Gamma 
    \Theta_{fs}(\phi)
  \right\} \\
  + \int_\Omega \left[\eta(\phi) |\bS(\ue)|^2
  +  M(\phi) |\GRAD \mu|^2 + K(\phi) |\GRAD(\lambda q + V )|^2 \right]
  + \int_\Gamma \left[\beta(\phi) |\ue_\btau|^2 + \alpha |\phi_t + \ue_\btau \psi(\phi)|^2 \right] \\
  = \scl \partial_t (\vare^\star(\phi) \GRAD V), \GRAD \bar V_0 \scr_{\Omega^\star}
  - \scl q_t, \bar V_0 \scr.
\end{multline*}
Integrate in time over $[0,t]$, with $0<t<T$ and integrate by parts the right hand side. Repeated
applications of the Cauchy-Schwarz inequality give us
\begin{multline*}
   \left\{
    \int_\Omega \left[\frac12 \rho(\phi) |\ue|^2 + \frac\lambda4 q^2
    + \gamma\left( \frac\delta2 |\GRAD \phi|^2 + \frac1\delta \calW(\phi)\right) \right]
    + \int_{\Omega^\star} \frac14 \vare^\star(\phi) |\GRAD V|^2 + \gamma \int_\Gamma \Theta_{fs}(\phi)
  \right\}\Big|_{t} \\
  + \int_0^t\int_\Omega \left[ \eta(\phi) |\bS(\ue)|^2
  + M(\phi) |\GRAD \mu|^2 + K(\phi) |\GRAD(\lambda q + V )|^2 \right]
  + \int_0^t\int_\Gamma \left[\beta(\phi) |\ue_\btau|^2 + \alpha |\phi_t + \ue_\btau \psi(\phi)|^2 \right]
  \\
  \leq \left\{
    \int_\Omega \left[\frac12 \rho(\phi) |\ue|^2 + q^2
    + \gamma\left( \frac\delta2 |\GRAD \phi|^2 + \frac1\delta \calW(\phi)\right) \right]
    + \int_{\Omega^\star} \vare^\star(\phi) |\GRAD V|^2 + \gamma \int_\Gamma \Theta_{fs}(\phi)
  \right\}\Big|_{t=0}\\
  + \int_{\Omega^\star}
    \vare_M \left(
      |\GRAD \bar V_0|^2(t) + |\GRAD \bar V_0|^2(0)
    \right)
  +\int_\Omega \left[
    \frac1\lambda |\bar V_0|^2(t) + \frac12 |\bar V_0|^2(0)
  \right] \\
  + c\int_0^t \left\{
    \int_{\Omega^\star} \vare_M |\GRAD \bar V_{0,t}|^2  +
    \frac4\lambda \int_\Omega |\bar V_{0,t}|^2
  \right\}^{1/2}
  \left[
    \frac\lambda4 \int_\Omega q^2 
    + \int_{\Omega^\star} \frac14 \vare^\star(\phi) |\GRAD V|^2
  \right]^{1/2},
\end{multline*}
where $\vare_M$ is the maximal value of the function $\vare^\star(\phi)$.

Finally, if we set
\begin{align*}
  f(t) &= \left\{
      \int_\Omega \left[\frac12 \rho(\phi) |\ue|^2 + \frac\lambda4 q^2
      + \gamma\left( \frac\delta2 |\GRAD \phi|^2 + \frac1\delta \calW(\phi)\right) \right]
      + \int_{\Omega^\star} \frac14 \vare^\star(\phi) |\GRAD V|^2 
      + \gamma \int_\Gamma \Theta_{fs}(\phi)
    \right\}(t), \\
  g(t) &= \int_\Omega \left[
      \eta(\phi) |\bS(\ue)|^2 + M(\phi) |\GRAD \mu|^2 + K(\phi) |\GRAD(\lambda q + V )|^2
    \right]
    + \int_\Gamma \left[
      \beta(\phi) |\ue_\btau|^2 + \alpha |\phi_t + \ue_\btau \psi(\phi)|^2
    \right](t), \\
  h(t) &= \left\{
      \int_\Omega \left[\frac12 \rho(\phi) |\ue|^2 + q^2
      + \gamma\left( \frac\delta2 |\GRAD \phi|^2 + \frac1\delta \calW(\phi)\right) \right]
      + \int_{\Omega^\star} \vare^\star(\phi) |\GRAD V|^2 + \gamma \int_\Gamma \Theta_{fs}(\phi)
    \right\}\Big|_{0}, \\
    &+ \int_{\Omega^\star}\vare_M \left(
      |\GRAD \bar V_0|^2(t) + |\GRAD \bar V_0|^2(0)
    \right)
    +\int_\Omega \left[
      \frac1\lambda |\bar V_0|^2(t) + \frac12 |\bar V_0|^2(0)
    \right], \\
  w(t) &= \left\{
    \int_{\Omega^\star} \vare_M |\GRAD \bar V_{0,t}|^2 +
    \frac4\lambda \int_\Omega |\bar V_{0,t}|^2 
  \right\}^{1/2},
\end{align*}
then an application of Lemma~\ref{lem:mod-gronwall} gives the desired estimate.
\end{proof}

\section{The Fully Discrete Problem and Its Analysis}
\label{sec:Discrete}
In this section we introduce a space-time discrete problem that is used to
approximate the electrowetting problem \eqref{eq:phaseweak}--\eqref{eq:potentialweak}.
Using this discrete problem, and the result of Theorem~\ref{thm:energy}, we will prove that a
time-discrete version of our problem always has a solution. Moreover, in Section~\ref{sec:NumExp}, we
will base our numerical experiments on a variant of the problem defined here.

\subsection{Definition of the Fully Discrete Problem}
\label{sub:defhtau}
To discretize in time, as discussed in \S\ref{sub:Notation}, we divide the time interval $[0,T]$ into
subintervals of length $\dt>0$. Recall that the time increment operator $\frakd$
was introduced in \eqref{eq:frakd} and the time average operator $\overline{(\cdot)}$ in \eqref{eq:defstar}.

To discretize in space, we introduce a parameter
$h>0$ and let $\polW_h \subset \Hunstar$, $\polQ_h \subset \Hun$, $\polX_h \subset \bV$ and
$\polM_h \subset \tildeLdeux$ be finite dimensional subspaces.
We require the following compatibility condition between the spaces $\polW_h$ and
$\polQ_h$:
\begin{equation}
  W_h|_{\Omega} \in \polQ_h, \quad \forall W_h \in \polW_h.
\label{eq:voltchargecomp}
\end{equation}
Moreover, we require that the pair of spaces $(\polX_h,\polM_h)$ satisfies the so-called
LBB condition (see \cite{GR86,BF91,MR2050138}), that is, there exists a constant $c$ independent of
$h$ such that
\begin{equation}
  c \| \bar{p}_h \|_{L^2} \leq \sup_{\bv_h \in \polX_h}
  \frac{ \int_\Omega \bar{p}_h \DIV \bv_h }{ \| \bv_h \|_{\bH^1}},
  \quad \forall \bar{p}_h \in \polM_h.
\label{eq:LBB}
\end{equation}
Finally, we assume that if $\polY$ is any of the continuous spaces and $\polY_h$ the corresponding subspace,
then $h_1 < h_2$ implies $\polY_{h_2} \subset \polY_{h_1}$. Moreover, the family of spaces
$\{ \polY_h\}_{h>0}$, is ``dense in the limit.''
In other words, for every $h>0$ there is a continuous operator $\calI_h : \polY \rightarrow \polY_h$ such
that when $h\rightarrow 0$
\[
  \| y - \calI_h y \|_{\polY} \rightarrow 0, \quad \forall y \in \polY.
\]

The space $\polW_h$ will be used to approximate the voltage; $\polQ_h$ the charge, phase field and
chemical potential; and $\polX_h,\ \polM_h$ the velocity and pressure, respectively. Finally, to
account for the boundary conditions on the voltage, we denote
\[
  \polW_h(\bar V_0^{k+1}) = \polW_h + \bar V_0^{k+1}.
\]

\begin{rem}[Finite elements]
The introduced spaces can be easily constructed using, for instance, finite elements, see
\cite{GR86,BF91,MR2050138,Ci78} for details. The compatibility condition \eqref{eq:voltchargecomp} can
be easily attained. For instance, one can require that the mesh is constructed in such a way that
for all cells $\calK$ in the triangulation $\calT_h$,
\begin{align*}
  \calK \cap \bar \Omega \neq \emptyset &\Leftrightarrow
  \calK \cap \left (\Omega^\star \setminus \bar \Omega \right)= \emptyset,
%   \calK \cap \bar \Omega = \emptyset &\Leftrightarrow
%   \calK \cap \left (\Omega^\star \setminus \bar \Omega \right) \neq \emptyset,
\end{align*}
and the polynomial degree of the space $\polQ_h$ is no less than that of $\polW_h$.
Finally, we remark that the nestedness assumption is done merely for convenience.
\end{rem}

The fully discrete problem searches for
\[
  \left\{V_{h\dt}-\bar{V}_{0,\dt},q_{h\dt},\phi_{h\dt},\mu_{h\dt},\bu_{h\dt},p_{h\dt}\right\}
  \subset \polW_h \times \polQ_h^3 \times \polX_h \times \polM_h,
\]
that solve:
\begin{description}
  \item[Initialization] For $n=0$, let $q_h^0$, $\phi_h^0$ and $\bu_h^0$ be suitable approximations of
  the initial charge, phase field and velocity, respectively.
  \item[Time Marching] For $0 \leq n \leq N-1$ we compute
  \[
    (V_h^{n+1},q_h^{n+1},\phi_h^{n+1},\mu_h^{n+1},\bu_h^{n+1},p_h^{n+1})
    \in \polW_h(\bar V_0^{n+1}) \times \polQ_h^3 \times \polX_h \times \polM_h,
  \]
  that solve:
  \begin{equation}
     \scl \vare^\star(\phi_h^{n+1}) \GRAD V_h^{n+1}, \GRAD W_h \scr_{\Omega^\star}
    = \scl q_h^{n+1}, W_h \scr, \quad \forall W_h \in \polW_h,
  \label{eq:potentialdiscrete}
  \end{equation}
  \begin{equation}
    \scl \frac{ \frakd q_h^{n+1} }\dt, r_h \scr
    - \scl q_h^n \bu_h^{n+1}, \GRAD r_h \scr
    + \scl K(\phi_h^n) \GRAD \left( \lambda q_h^{n+1} + V_h^{n+1} \right), \GRAD r_h \scr = 0,
    \quad \forall r_h \in \polQ_h,
  \label{eq:chargediscrete}
  \end{equation}
  \begin{equation}
    \scl \frac{\frakd \phi_h^{n+1}}\dt, \bar\phi_h \scr
    + \scl \bu_h^{n+1} \SCAL \GRAD \phi_h^n, \bar\phi_h \scr
    + \scl M(\phi_h^n) \GRAD \mu_h^{n+1}, \GRAD \bar\phi_h \scr =0, \quad \forall \bar\phi_h \in \polQ_h
  \label{eq:phasediscrete}
  \end{equation}
  \begin{multline}
    \scl \mu_h^{n+1}, \bar\mu_h \scr
    = \frac\gamma\delta \scl \calW'(\phi_h^n) + \calA \frakd\phi_h^{n+1}, \bar\mu_h \scr
    + \gamma\delta \scl \GRAD \phi_h^{n+1}, \GRAD \bar\mu_h \scr \\
    - \frac12 \scl \calE(\phi_h^{n+1}, \phi_h^n) |\GRAD V_h^{n+1}|^2, \bar\mu_h \scr
    + \frac12 \scl \rho'(\phi_h^n) \bu_h^n \SCAL \bu_h^{n+1}, \bar\mu_h \scr \\
    + \alpha \sbl \frac{\frakd \phi_h^{n+1}}\dt + \bu_{h\btau}^{n+1}\psi(\phi_h^n), \bar\mu_h \sbr
    + \gamma \sbl \Theta_{fs}'(\phi_h^n) + \calB\frakd\phi_h^{n+1}, \bar\mu_h \sbr
    \quad \forall \bar\mu_h \in \polQ_h,
  \label{eq:chemdiscrete}
  \end{multline}
  where we introduced
  \begin{equation}
    \calE(\varphi_1, \varphi_2) = \int_0^1  \vare'\left( s \varphi_1 + (1-s)\varphi_2 \right) \diff s,
  \label{eq:defofcalE}
  \end{equation}
  \begin{subequations}
  \label{eq:NSEdiscrete}
  \begin{multline}
  \label{eq:veldiscrete}
    \scl \frac{ \overline{\rho(\phi_h^{n+1})} \bu_h^{n+1} - \rho(\phi_h^n) \bu_h^n }\dt, \bw_h \scr
    + \scl \rho(\phi_h^n) \bu_h^n \ADV \bu_h^{n+1}, \bw_h \scr
    + \frac12 \scl \DIV(\rho(\phi_h^n)\bu_h^n) \bu_h^{n+1}, \bw_h \scr \\
    + \scl \eta(\phi_h^n) \bS(\bu_h^{n+1}), \bS(\bw_h) \scr
    - \scl p_h^{n+1}, \DIV \bw_h \scr
    + \sbl \beta(\phi_h^n) \bu_{h\btau}^{n+1}, \bw_{h\btau} \sbr
    + \alpha \sbl \bu_{h\btau}^{n+1} \psi(\phi_h^n), \bw_{h\btau} \psi(\phi_h^n) \sbr
    \\ =
    \scl \mu_h^{n+1}\GRAD \phi_h^n, \bw_h \scr
    - \scl q_h^n\GRAD( \lambda q_h^{n+1} + V_h^{n+1}), \bw_h \scr
    + \frac12 \scl \rho'(\phi_h^n) \frac{\frakd \phi_h^{n+1}}\dt \bu_h^n, \bw_h \scr \\
    - \alpha \sbl \frac{\frakd \phi_h^{n+1}}\dt, \bw_{h\btau} \psi(\phi_h^n) \sbr
    \quad \forall \bw_h \in \polX_h
  \end{multline}
  \begin{equation}
  \label{eq:presdiscrete}
    \scl \bar{p}_h, \DIV \bu_h^{n+1} \scr = 0, \quad \forall \bar{p}_h \in \polM_h.
  \end{equation}
  \end{subequations}
\end{description}

\begin{rem}[Stabilization parameters]
Notice that, in \eqref{eq:chemdiscrete}, we have introduced two stabilization parameters, namely
$\calA$ and $\calB$. Their purpose is two-fold.  First, they will allow us to
treat the nonlinear terms explicitly while still being able to mantain stability of the scheme, see
Proposition~\ref{prop:denergy} below.
Second, when studying convergence of this problem, the presence of these terms will allow us to obtain
further \emph{a priori} estimates on discrete solutions which, in turn, will help in passing to the limit,
see Theorem~\ref{thm:semidiscreteexists}.
We must mention that, this way of writing nonlinearities is related to the splitting 
of the energy into a convex and concave part 
proposed in \cite{MR2519603}. See also \cite{MR2679727,MR2726065}.
\end{rem}

\begin{rem}[Derivative of the permittivity]
\label{rem:calE}
Notice that \eqref{eq:defofcalE}, \ie the definition of the term $\calE$, is a highly nonlinear function
of its arguments (unless $\vare$ is of a very specific type). As the reader has seen in the derivation of the
energy law (Theorem~\ref{thm:energy}), the treatment of the term involving the derivative of the 
permittivity is subtle. In the fully discrete setting this is additionally complicated by the 
fact that we need
to deal with quantities at different time layers. The reason to write the derivative of the permittivity
in this form is that
\[
  \calE(\varphi_1, \varphi_2 ) = 
  \begin{dcases}
    \frac{ \vare(\varphi_1) - \vare(\varphi_2) }{\varphi_1 - \varphi_2 }, & \varphi_1 \neq \varphi_2, \\
    \vare'(\varphi_1), & \varphi_1 = \varphi_2,
  \end{dcases}
\]
which will allow us to obtain the desired cancellations.
\end{rem}

The following subsections will be devoted to the analysis of problem
\eqref{eq:potentialdiscrete}--\eqref{eq:NSEdiscrete}. For convenience, we
define
\[
  \phi_h^n := \frac{ \frakd \phi_h^n}\dt + \bu_{h\btau}^n \psi(\phi_h^{n-1}).
\]

\subsection{A Priori Estimates and Existence}
\label{sub:stability}

Let us show that, if problem \eqref{eq:potentialdiscrete}--\eqref{eq:NSEdiscrete} has a solution,
it satisfies a discrete energy inequality similar to the one stated in Theorem~\ref{thm:energy}.  
To do this, we first require the following formula, whose proof is straightforward.

\begin{lem}[Summation by parts]
\label{prop:sum_by_parts}
Let $\{ f^n \}^{m-1}_{n=0}$ and $\{ g^n \}^{m-1}_{n=0}$ be sequences and assume $f^{-1} = g^{-1} = 0$.
Then we have
\begin{equation}
\label{eq:sum_by_parts}
  \sum^{m-1}_{n=0} (\frakd g^n) f^n = f^{m-1} g^{m-1} - \sum^{m-2}_{n=0} g^n (\frakd f^{n+1}).
\end{equation}
\end{lem}

\begin{prop}[Discrete stability]
\label{prop:denergy}
Assume that the stabilization parameters $\calA$ and $\calB$ are chosen so that
\begin{equation}
  \calA \geq \frac12 \sup_{\xi \in \Real} \calW''(\xi), \quad
  \calB \geq \frac12 \sup_{\xi \in \Real} \Theta_{fs}''(\xi).
\label{eq:stabparams}
\end{equation}
The solution to \eqref{eq:potentialdiscrete}--\eqref{eq:NSEdiscrete}, if it exists,
satisfies the following a priori estimate
\begin{multline}
\label{eq:denergy}
  \| \bu_{h\dt} \|_{\ell^\infty(\bL^2)}
  + \| \frakd \bu_{h\dt} \|_{\frakh^{1/2}(\bL^2)}
  + \| \bu_{h\dt} \|_{\ell^2(\bV)}
  + \| q_{h\dt}\|_{\ell^\infty(L^2)} + \| \frakd q_{h\dt} \|_{\frakh^{1/2}(L^2)} \\
  + \| \GRAD \phi_{h\dt}\|_{\ell^\infty(\bL^2)} + \| \GRAD \frakd \phi_{h\dt}\|_{\frakh^{1/2}(\bL^2)}
  + \| \calW(\phi_{h\dt})\|_{\ell^\infty(L^1)}
  + \|\GRAD V_{h\dt}\|_{\ell^\infty(\bL^2(\Omega^\star))} 
  + \| \GRAD \frakd V_{h\dt}\|_{\frakh^{1/2}(\bL^2(\Omega^\star))} \\
  + \| \dot \phi_{h\dt} \|_{\ell^2(L^2(\Gamma))}
  + \| \Theta_{fs}(\phi_{h\dt})\|_{\ell^\infty(L^1(\Gamma) )}
  + \| \GRAD \mu_{h\dt} \|_{\ell^2(\bL^2)} + \| \GRAD( \lambda q + V )_{h\dt} \|_{\ell^2(\bL^2)}
  \leq
  c,
\end{multline}
where we have set $\mu_h^0 \equiv 0$, $V_h^0 \equiv 0$ for convenience in writing 
\eqref{eq:denergy}. The constant $c$ depends on the constants $\gamma$, $\delta$, $\alpha$, 
the data of the problem $\bu_h^0$, $\phi_h^0$, $q_h^0$, $\bar V_{0,\dt}$ and $T$, but it does 
not depend on the discretization parameters $h$ or $\dt$, nor the solution of the problem.
\end{prop}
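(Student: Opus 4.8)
The plan is to reproduce, step by step at the discrete level, the computation behind the continuous estimate of Theorem~\ref{thm:energy}, using the discrete analogues of the multipliers employed there. For each fixed $n$ I would test \eqref{eq:veldiscrete} with $\bw_h=\bu_h^{n+1}$ and \eqref{eq:presdiscrete} with $\bar p_h=p_h^{n+1}$, so that the pressure term drops out; \eqref{eq:phasediscrete} with $\bar\phi_h=\mu_h^{n+1}$ and \eqref{eq:chemdiscrete} with $\bar\mu_h=-\frakd\phi_h^{n+1}/\dt$; \eqref{eq:chargediscrete} with $r_h=\lambda q_h^{n+1}+V_h^{n+1}$, which is legitimate as a test function precisely because of the compatibility condition \eqref{eq:voltchargecomp}; and, to recover the time derivative of the electrostatic energy, I would subtract \eqref{eq:potentialdiscrete} at level $n$ from the same equation at level $n+1$ and test with $W_h=V_h^{n+1}-\bar V_0^{n+1}\in\polW_h$. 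Collecting all terms of each identity on a single side and summing should yield one discrete energy balance for the step $n\mapsto n+1$.

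The first task is to confirm the cancellations and the emergence of the dissipation. The convective contributions in \eqref{eq:veldiscrete} cancel by the skew symmetry built into the Guermond--Quartapelle form (integrate by parts and use $\bu_h^{n+1}\SCAL\bn=0$); the coupling $\scl\mu_h^{n+1}\GRAD\phi_h^n,\bu_h^{n+1}\scr$ cancels between \eqref{eq:veldiscrete} and \eqref{eq:phasediscrete}; the electric force $\scl q_h^n\GRAD(\lambda q_h^{n+1}+V_h^{n+1}),\bu_h^{n+1}\scr$ cancels between \eqref{eq:veldiscrete} and \eqref{eq:chargediscrete}; the term $\scl\mu_h^{n+1},\frakd\phi_h^{n+1}/\dt\scr$ cancels between \eqref{eq:phasediscrete} and \eqref{eq:chemdiscrete}; and the coupling $\scl\frakd q_h^{n+1}/\dt,V_h^{n+1}\scr$ cancels between \eqref{eq:chargediscrete} and the differenced \eqref{eq:potentialdiscrete}, leaving only the data term $-\scl\frakd q_h^{n+1}/\dt,\bar V_0^{n+1}\scr$. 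Each remaining symmetric pairing produces, through the identity \eqref{eq:toobasic}, a telescoping energy plus a nonnegative numerical-dissipation term; for instance, with $\overline{\rho(\phi_h^{n+1})}=\tfrac12(\rho(\phi_h^{n+1})+\rho(\phi_h^n))$ one gets $\scl\overline{\rho(\phi_h^{n+1})}\bu_h^{n+1}-\rho(\phi_h^n)\bu_h^n,\bu_h^{n+1}\scr=\tfrac12\int_\Omega\rho(\phi_h^{n+1})|\bu_h^{n+1}|^2-\tfrac12\int_\Omega\rho(\phi_h^n)|\bu_h^n|^2+\tfrac12\int_\Omega\rho(\phi_h^n)|\frakd\bu_h^{n+1}|^2$, the last term feeding $\|\frakd\bu_{h\dt}\|_{\frakh^{1/2}(\bL^2)}$; the charge and voltage increments are handled identically and yield the $\frakh^{1/2}$ control of $\frakd q_{h\dt}$ and $\GRAD\frakd V_{h\dt}$.

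The step I expect to be the main obstacle is the electrostatic/permittivity coupling, where the design of the scheme is essential. Testing \eqref{eq:chemdiscrete} with $-\frakd\phi_h^{n+1}/\dt$ produces $\tfrac12\scl\calE(\phi_h^{n+1},\phi_h^n)|\GRAD V_h^{n+1}|^2,\frakd\phi_h^{n+1}/\dt\scr$, and by the characterization of $\calE$ in Remark~\ref{rem:calE}, $\calE(\phi_h^{n+1},\phi_h^n)\frakd\phi_h^{n+1}=\vare(\phi_h^{n+1})-\vare(\phi_h^n)$ pointwise, so this equals $\tfrac1{2\dt}\int_\Omega(\vare(\phi_h^{n+1})-\vare(\phi_h^n))|\GRAD V_h^{n+1}|^2$. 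Expanding the differenced electrostatic equation via \eqref{eq:toobasic} produces exactly $\tfrac1{2\dt}\int_{\Omega^\star}(\vare^\star(\phi_h^{n+1})-\vare^\star(\phi_h^n))|\GRAD V_h^{n+1}|^2$ as the permittivity-variation remainder; the two integrals coincide since $\vare^\star$ is constant, hence unchanged in $n$, on $\Omega^\star\setminus\Omega$. Once every identity is written with all its terms on one side, these two contributions carry opposite signs and cancel, leaving only the telescoping electrostatic energy $\tfrac12\int_{\Omega^\star}\vare^\star(\phi_h^{n+1})|\GRAD V_h^{n+1}|^2$ and a nonnegative increment; matching the factor $\tfrac1{2\dt}$ and the domains of integration is exactly what the definition \eqref{eq:defofcalE} is engineered to provide. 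The explicit nonlinear terms are absorbed by the stabilization: a second-order Taylor expansion gives $\calW'(\phi_h^n)\frakd\phi_h^{n+1}=\calW(\phi_h^{n+1})-\calW(\phi_h^n)-\tfrac12\calW''(\xi)(\frakd\phi_h^{n+1})^2$, so adding $\calA(\frakd\phi_h^{n+1})^2$ telescopes the double-well energy and leaves the nonnegative remainder $(\calA-\tfrac12\calW''(\xi))(\frakd\phi_h^{n+1})^2$ precisely under the hypothesis \eqref{eq:stabparams} on $\calA$; the boundary energy $\Theta_{fs}$ is treated the same way through $\calB$. Finally, the $\alpha$-boundary terms from \eqref{eq:veldiscrete} and \eqref{eq:chemdiscrete} recombine into the square $\alpha\sbl(\frakd\phi_h^{n+1}/\dt+\bu_{h\btau}^{n+1}\psi(\phi_h^n))^2\sbr$, giving the dissipation measured by $\|\dot\phi_{h\dt}\|_{\ell^2(L^2(\Gamma))}$.

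It then remains to close the estimate. Summing the per-step identity over $n=0,\dots,m-1$ telescopes the energies and accumulates the nonnegative dissipation; the terms carrying the boundary data $\bar V_0^{n+1}$ are rearranged with the summation-by-parts formula of Lemma~\ref{prop:sum_by_parts} so that only the increments $\frakd\bar V_0$ and the endpoint values remain, after which repeated use of Cauchy--Schwarz and Young's inequality casts the result into the form $f(t_m)^2+\int_0^{t_m}g\le h(t_m)+\int_0^{t_m}fw$ required by Lemma~\ref{lem:mod-gronwall}, with $f,g,h,w$ the discrete counterparts of the functions appearing in the proof of Theorem~\ref{thm:energy}. Invoking that lemma delivers a bound that is independent of $h$ and $\dt$ and free of exponential growth in $T$, and collecting the controlled quantities gives \eqref{eq:denergy}.
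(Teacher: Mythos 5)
Your proposal is correct and follows essentially the same route as the paper's own proof: the same test functions (up to an overall normalization by $2\dt$ — note that with your scaling the differenced potential identity must itself be divided by $\dt$ for the claimed cancellation of $\scl \frakd q_h^{n+1}, V_h^{n+1}\scr$ against the charge equation), the same telescoping identity for the variable-density momentum term, the same Taylor/stabilization argument for $\calW$ and $\Theta_{fs}$, the identical cancellation of the permittivity terms via the definition of $\calE$, and the same closing by summation by parts, Cauchy--Schwarz, Young, and the Gr\"onwall-type argmax argument. No gaps.
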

\begin{proof}
We repeat the steps used to prove Theorem~\ref{thm:energy}, \ie set
$\bw_h = 2\dt \bu_h^{n+1}$ in \eqref{eq:veldiscrete},
$\bar p_h = p_h^{n+1}$ in \eqref{eq:presdiscrete},
$\bar\phi_h = 2\dt \mu_h^{n+1}$ in \eqref{eq:phasediscrete},
$\bar\mu_h = -2\frakd \phi_h^{n+1}$ in \eqref{eq:chemdiscrete} and
$r_h = 2\dt (\lambda q_h^{n+1}+V_h^{n+1})$ in \eqref{eq:chargediscrete}.
To treat the time-derivative terms in the discrete momentum equation, we use the identity
\[
    2 \bu_h^{n+1} \SCAL \left( \overline{\rho(\phi_h^{n+1})} \bu_h^{n+1} - \rho(\phi_h^n) \bu_h^n \right) =
    \rho(\phi_h^{n+1}) |\bu_h^{n+1}|^2 - \rho(\phi_h^n) |\bu_h^n|^2
    + \rho(\phi_h^n) |\frakd \bu_h^{n+1}|^2;
\]
see \cite{Guermond20092834,MR2398778}. To obtain control on the explicit terms involving the derivatives
of the Ginzburg-Landau potential $\calW$ and the surface energy density $\Theta_{fs}$,
notice that, for instance,
\[
  \calW(\phi_h^{n+1})- \calW(\phi_h^n) = \calW'(\phi_h^n)\frakd \phi_h^{n+1} + \frac12 \calW''(\xi)
  (\frakd \phi_h^{n+1})^2,
\]
for some $\xi$. Choosing the stabilization constant according to \eqref{eq:stabparams}
(\cf \cite{MR2679727,MR2726065,Shen_Xiaofeng_2010,SalgadoMCL}), we deduce that
\[
  \int_\Omega \left( \calW'(\phi_h^n) + \calA \frakd\phi_h^{n+1} \right)\frakd \phi_h^{n+1}  \geq
  \int_\Omega \frakd \calW(\phi_h^{n+1}).
\]
Adding \eqref{eq:chargediscrete}--\eqref{eq:NSEdiscrete} yields,
%  Combining these With these simplifications, adding the obtained relations yields
\begin{multline}
  \frakd \| \sigma(\phi_h^{n+1})\bu_h^{n+1}\|_{\bL^2}^2
  + \| \sigma(\phi_h^n) \frakd \bu_h^{n+1}\|_{\bL^2}^2
  + \lambda \left( \frakd \|q_h^{n+1}\|_{L^2}^2 + \| \frakd q_h^{n+1}\|_{L^2}^2 \right)
  + \gamma \delta \left( \frakd \|\GRAD \phi_h^{n+1}\|_{\bL^2}^2 \right. \\
  + \left. \| \GRAD \frakd \phi_h^{n+1}\|_{\bL^2}^2 \right)
  + \frac{2\gamma}\delta \int_\Omega \frakd \calW(\phi_h^{n+1})
  + 2\gamma \int_\Gamma \frakd \Theta_{fs}(\phi_h^{n+1})
  + 2\dt \left[
    \left\| \sqrt{\eta(\phi_h^n)}\bS(\bu_h^{n+1}) \right\|_{\bL^2}^2
  \right. \\
    + \left\| \sqrt{\beta(\phi_h^n)} \bu_{h\btau}^{n+1} \right\|_{\bL^2(\Gamma)}^2
    + \left\| \sqrt{M(\phi_h^n)}\GRAD \mu_h^{n+1} \right\|_{\bL^2}^2
    + \left\| \sqrt{K(\phi_h^n)}\GRAD \left( \lambda q_h^{n+1} + V_h^{n+1} \right) \right\|_{\bL^2}^2
  \\
  \left.
    + \alpha \left\| \frac{ \frakd \phi_h^{n+1} }\dt
    + \bu_{h\btau}^{n+1}\psi(\phi_h^n) \right\|_{L^2(\Gamma)}^2
  \right]
  + 2 \scl \frakd q_h^{n+1}, V_h^{n+1} \scr
  \leq
  \scl \calE(\phi_h^{n+1},\phi_h^n) |\GRAD V_h^{n+1}|^2, \frakd \phi_h^{n+1} \scr.
\label{eq:dElaw-sansV}
\end{multline}

Take the difference of \eqref{eq:potentialdiscrete} at time-indices $n+1$ and $n$ to obtain
\[
  \scl \frakd \left( \vare^\star(\phi_h^{n+1}) \GRAD V_h^{n+1} \right),
  \GRAD W_h \scr_{\Omega^\star}
  = \scl \frakd q_h^{n+1}, W_h \scr,
\]
and set $W_h = 2(V_h^{n+1} - \bar V_0^{n+1})$. In view of \eqref{eq:toobasic} we have
\begin{multline*}
  2 \frakd \left( \vare^\star(\phi_h^{n+1}) \GRAD V_h^{n+1} \right) \SCAL
  \GRAD V_h^{n+1}
  =
  \frakd \left( \vare^\star(\phi_h^{n+1}) | \GRAD V_h^{n+1} |^2 \right) \\
  + \vare^\star(\phi_h^n) | \GRAD \frakd V_h^{n+1} |^2
  + \frakd\left( \vare^\star(\phi_h^{n+1}) \right) | \GRAD V_h^{n+1} |^2,
\end{multline*}
whence
\begin{multline}
  \frakd \left\| \sqrt{\vare^\star(\phi_h^{n+1}) }
    \GRAD V_h^{n+1} \right\|_{\bL^2(\Omega^\star)}^2
  + \left\| \sqrt{\vare^\star(\phi_h^n) }
    \GRAD \frakd V_h^{n+1} \right\|_{\bL^2(\Omega^\star)}^2
  + \int_{\Omega^\star} \frakd \vare^\star(\phi_h^{n+1}) |\GRAD V_h^{n+1}|^2 \\
  = 2 \scl \frakd q_h^{n+1}, V_h^{n+1} \scr
  - 2 \scl \frakd q_h^{n+1}, \bar V_0^{n+1} \scr
  + 2 \scl \frakd\left( \vare^\star(\phi_h^{n+1}) \GRAD V_h^{n+1} \right),
     \GRAD \bar V_0^{n+1} \scr_{\Omega^\star}.
\label{eq:dEVolt}
\end{multline}

Add \eqref{eq:dElaw-sansV} and \eqref{eq:dEVolt}. Notice that, since the permittivity is assumed constant on
$\Omega^\star \setminus \bar\Omega$, on the left hand side of the resulting inequality we have
the following term
\[
  \int_\Omega \left( \frakd\vare(\phi_h^{n+1}) - \calE(\phi_h^{n+1},\phi_h^n) \frakd\phi_h^{n+1} \right) 
  |\GRAD V_h^{n+1}|^2 = 0,  
\]
where we used the definition of $\calE$, see \eqref{eq:defofcalE} and Remark~\ref{rem:calE}.
Therefore, we obtain
\begin{multline}
  \frakd \| \sigma(\phi_h^{n+1})\bu_h^{n+1}\|_{\bL^2}^2
  + \| \sigma(\phi_h^n) \frakd \bu_h^{n+1}\|_{\bL^2}^2
  + \lambda \left( \frakd \|q_h^{n+1}\|_{L^2}^2 + \frac12 \| \frakd q_h^{n+1}\|_{L^2}^2 \right) \\
  + \gamma \delta \left( \frakd \|\GRAD \phi_h^{n+1}\|_{\bL^2}^2
  + \| \GRAD \frakd \phi_h^{n+1}\|_{\bL^2}^2 \right)
  + \frac{2\gamma}\delta \int_\Omega \frakd \calW(\phi_h^{n+1}) \\
  + \frakd \left\| \sqrt{\vare^\star(\phi_h^{n+1}) }
     \GRAD V_h^{n+1} \right\|_{\bL^2(\Omega^\star)}^2
  + \left\| \sqrt{\vare^\star(\phi_h^n) }
    \GRAD \frakd V_h^{n+1} \right\|_{\bL^2(\Omega^\star)}^2
  + 2\gamma \int_\Gamma \frakd \Theta_{fs}(\phi_h^{n+1}) \\
  + 2\dt \left[
    \left\| \sqrt{\eta(\phi_h^n)}\bS(\bu_h^{n+1}) \right\|_{\bL^2}^2
    + \| \sqrt{\beta(\phi_h^n)} \bu_{h\btau}^{n+1}\|_{\bL^2(\Gamma)}^2
  \right. \\
  \left.
    + \|\sqrt{M(\phi_h^n)}\GRAD \mu_h^{n+1}\|_{\bL^2}^2
    + \left\| \sqrt{K(\phi_h^n)}\GRAD \left( \lambda q_h^{n+1} + V_h^{n+1} \right) \right\|_{\bL^2}^2
    + \alpha \left\| \frac{ \frakd \phi_h^{n+1} }\dt + \bu_{h\btau}^{n+1}\psi(\phi_h^n) \right\|_{L^2(\Gamma)}^2
  \right] \\
  \leq
  -2 \scl \frakd q_h^{n+1}, \bar V_0^{n+1} \scr
  + 2 \scl \frakd \left( \vare^\star(\phi_h^{n+1}) \GRAD V_h^{n+1} \right),
     \GRAD \bar V_0^{n+1} \scr_{\Omega^\star}.
\label{eq:dElawfinalM1}
\end{multline}

Summing \eqref{eq:dElawfinalM1} for $n=0,...,m-1$, using summation by parts \eqref{eq:sum_by_parts} 
(set $\mu_h^0 \equiv 0$, $V_h^0 \equiv 0$), applying the Cauchy-Schwarz and weighted Young's
inequality, we obtain the result.
% \tred{SWW: why is this term dropped:}
% \begin{equation*}
%     2\dt \alpha \left\| \frac{ \frakd \phi_h^{n+1} }\dt + \bu_{h\btau}^{n+1}\psi(\phi_h^n) \right\|_{L^2(\Gamma)}^2 = 2\dt \alpha \left\{ \left\| \frac{ \frakd \phi_h^{n+1} }\dt \right\|_{L^2(\Gamma)}^2 + \left\| \bu_{h\btau}^{n+1}\psi(\phi_h^n) \right\|_{L^2(\Gamma)}^2 + 2 \scl \frac{ \frakd \phi_h^{n+1} }\dt, \bu_{h\btau}^{n+1}\psi(\phi_h^n) \scr \right\}
% \end{equation*}
% \tred{The $\psi(\phi_h^n)$ term is well-behaved for fixed $\delta$.  So you can do a weighted Young's inequality and put all the weight on $\psi(\phi_h^n)$ and absorb the other terms.  Doing this would give an a priori bound on $\| \frac{ \frakd \phi_h^{n+1} }\dt \|_{\ell^2(L^2(\Gamma))}$.  This seems like a good thing.
% AJS: The estimates then will not be uniform in $n$...}
\end{proof}

\begin{rem}[Compatibility]
Notice that condition \eqref{eq:voltchargecomp} is needed to obtain the stability estimate,
otherwise $2\dt (\lambda q_h^{n+1}+V_h^{n+1})$ would not be an admissible test function for
\eqref{eq:chargediscrete}.
\end{rem}

The \emph{a priori} estimate \eqref{eq:denergy} allows us to conclude that, for all $h>0$ and $\dt>0$, problem
\eqref{eq:potentialdiscrete}--\eqref{eq:NSEdiscrete} has a solution.

\begin{thm}[Existence]
\label{cor:d-existence}
Assume that the discrete spaces satisfy assumptions \eqref{eq:voltchargecomp} and \eqref{eq:LBB},
the stabilization parameters
$\calA,\ \calB$ are chosen as in Proposition~\ref{prop:denergy}. Then, for all
$h>0$ and $\dt>0$, problem
\eqref{eq:potentialdiscrete}--\eqref{eq:NSEdiscrete} has a solution. Moreover, any solution satisfies
estimate
\eqref{eq:denergy}.
\end{thm}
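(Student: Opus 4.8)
The plan is to proceed by induction on the time level $n$ and, at each step, to recast the search for $(V_h^{n+1},q_h^{n+1},\phi_h^{n+1},\mu_h^{n+1},\bu_h^{n+1},p_h^{n+1})$ as the problem of finding a zero of a continuous map on a finite-dimensional space, to which a standard corollary of Brouwer's fixed point theorem applies. The initialization $n=0$ is given, so assume the quantities at level $n$ are known. Since the pressure enters \eqref{eq:veldiscrete} only through $-\scl p_h^{n+1},\DIV\bw_h\scr$ and is itself constrained only by \eqref{eq:presdiscrete}, I would first remove it: introduce the discretely solenoidal subspace $\polX_h^0:=\{\bv_h\in\polX_h:\scl\bar p_h,\DIV\bv_h\scr=0\ \forall\bar p_h\in\polM_h\}$, restrict the momentum balance to test functions in $\polX_h^0$, and seek $\bu_h^{n+1}\in\polX_h^0$. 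Once the remaining unknowns are produced, the LBB condition \eqref{eq:LBB} guarantees that the momentum residual, which by construction annihilates $\polX_h^0$, is represented by a unique $p_h^{n+1}\in\polM_h$ through the pairing $\scl p_h^{n+1},\DIV\,\cdot\,\scr$; this recovers the pressure.

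Next I would assemble the level-$(n+1)$ equations into a single residual. With $z:=(V_h^{n+1}-\bar V_0^{n+1},q_h^{n+1},\phi_h^{n+1},\mu_h^{n+1},\bu_h^{n+1})$ ranging over the finite-dimensional Hilbert space $\mathcal{Z}_h:=\polW_h\times\polQ_h\times\polQ_h\times\polQ_h\times\polX_h^0$, let $\mathcal{F}(z)[w]$ be the sum of the five residuals \eqref{eq:potentialdiscrete}, \eqref{eq:chargediscrete}, \eqref{eq:phasediscrete}, \eqref{eq:chemdiscrete} and \eqref{eq:veldiscrete} (with the pressure dropped), each paired against the matching component of $w\in\mathcal{Z}_h$. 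Continuity of $\mathcal{F}$ in $z$ is essentially free: the coefficient functions $\vare^\star$, $K$, $M$, $\rho$, $\eta$, $\beta$ and $\psi$, together with $\calW'$ and $\Theta_{fs}'$ — the latter two globally Lipschitz by the quadratic truncation of $\calW$ — and the averaged quantity $\calE$ of \eqref{eq:defofcalE} are all continuous in their arguments, and on a finite-dimensional space all norms are equivalent.

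The core of the argument is the a priori bound obtained exactly as in Proposition~\ref{prop:denergy} at a single step. To encode that bound I would fix a linear isomorphism $\Lambda$ of $\mathcal{Z}_h$ for which $\Lambda z$ has components $(V_h^{n+1}-\bar V_0^{n+1},\ \lambda q_h^{n+1}+V_h^{n+1},\ \mu_h^{n+1},\ -\frakd\phi_h^{n+1},\ \bu_h^{n+1})$, and define $\mathcal{R}:\mathcal{Z}_h\to\mathcal{Z}_h$ by $\scl\mathcal{R}(z),w\scr:=\mathcal{F}(z)[\Lambda w]$. Because $\Lambda$ is an isomorphism, the zeros of $\mathcal{R}$ are exactly the solutions of the level-$(n+1)$ problem, while the diagonal pairing $\scl\mathcal{R}(z),z\scr=\mathcal{F}(z)[\Lambda z]$ is precisely the combination of tests used in Proposition~\ref{prop:denergy}; it reproduces the left-hand side of \eqref{eq:dElawfinalM1}, in which the permittivity contribution cancels thanks to the choice of $\calE$ (Remark~\ref{rem:calE}). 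The coercive energy and dissipation terms then dominate the data terms, so $\scl\mathcal{R}(z),z\scr>0$ once $\|z\|_{\mathcal{Z}_h}$ is large enough, and the standard corollary of Brouwer's fixed point theorem furnishes a zero of $\mathcal{R}$. The pressure is then recovered through \eqref{eq:LBB} as above, closing the induction.

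I expect the delicate point to be the construction and control of $\Lambda$: the energy testing of Proposition~\ref{prop:denergy} pairs the phase equation with $\mu$ and the chemical-potential equation with $-\frakd\phi$ rather than each equation with its own unknown, and one must check that the affine shifts hidden in $\Lambda$ (the lifting $\bar V_0^{n+1}$ and the previous value $\phi_h^n$ inside $\frakd\phi_h^{n+1}$) are lower order and do not spoil the coercivity for large $\|z\|_{\mathcal{Z}_h}$, all while preserving the permittivity cancellation of Remark~\ref{rem:calE}. Once existence at each level is in hand, the asserted global estimate \eqref{eq:denergy} for any solution follows by summing the per-step inequalities over $n=0,\dots,m-1$, using the summation by parts \eqref{eq:sum_by_parts} and Lemma~\ref{lem:mod-gronwall}, exactly as in the proof of Proposition~\ref{prop:denergy}.
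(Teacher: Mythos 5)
Your skeleton --- induction in $n$, a finite-dimensional ``zero of a vector field'' argument driven by the energy pairing, with the pressure eliminated on the discretely divergence-free subspace and recovered through \eqref{eq:LBB} --- is genuinely different from the paper's proof, which applies the Leray--Schauder theorem to a \emph{linearized, decoupled} map $\calL^{n+1}$ (a positive-definite potential/charge solve, a generalized Stokes solve via \eqref{eq:LBB}, then a linear Cahn--Hilliard solve) and gets the homotopy bound from the same energy tests, absorbing the leftover terms by finite dimensionality. Unfortunately your coercivity claim has two gaps. The first is the map $\Lambda$ itself: the tests of Proposition~\ref{prop:denergy} are $2\dt(\lambda q_h^{n+1}+V_h^{n+1})$ and $-2\frakd\phi_h^{n+1}$, which are \emph{affine}, not linear, in $z$ (they carry $\bar V_0^{n+1}$ and $\phi_h^n$). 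If $\Lambda$ is affine, $w\mapsto\mathcal{F}(z)[\Lambda w]$ is not linear in $w$ and $\mathcal{R}(z)$ has no Riesz representative; if you keep only the linear part of $\Lambda$, the cancellation of Remark~\ref{rem:calE} is destroyed and the pairing inherits the term $\int_\Omega \calE(\phi_h^{n+1},\phi_h^n)\,\phi_h^n\,|\GRAD V_h^{n+1}|^2$, which is \emph{quadratic} in $z$ with a coefficient of order $\|\vare'\|_{L^\infty}\|\phi_h^n\|_{L^\infty}$. Since $\calW$ is truncated to quadratic growth, every coercive term in \eqref{eq:dElawfinalM1} is also merely quadratic, so this is not ``lower order'' and nothing guarantees it is dominated. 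This particular defect is repairable: perform the affine change of variables on the \emph{unknowns} rather than the tests, i.e.\ define $\mathcal{R}(y)$ as the Riesz representative of $w\mapsto\mathcal{F}(\Lambda^{-1}y)[w]$, so that the diagonal pairing is exactly the affine-test energy identity, with no leftover.

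The second gap is structural and is, I believe, exactly why the paper does not argue this way. Even with the correct affine pairing, $\scl\mathcal{R}(z),z\scr$ does not control $\|\mu_h^{n+1}\|_{L^2}$: after the cancellations (the term $\scl\frakd\phi_h^{n+1},\mu_h^{n+1}\scr$ against the chemical-potential test, and $\dt\scl\bu_h^{n+1}\SCAL\GRAD\phi_h^n,\mu_h^{n+1}\scr$ against the capillary force in \eqref{eq:veldiscrete}), the chemical potential survives only through $2\dt\|\sqrt{M(\phi_h^n)}\GRAD\mu_h^{n+1}\|_{\bL^2}^2$. The only equation that ``sees'' $\|\mu_h^{n+1}\|_{L^2}$, namely \eqref{eq:chemdiscrete}, is paired with $-2\frakd\phi_h^{n+1}$, which vanishes along the direction where $\mu_h^{n+1}$ is a large constant and the other components sit at the trivial values $V=\bar V_0^{n+1}$, $q=0$, $\phi=\phi_h^n$, $\bu=0$. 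Along that direction (which lies on every sufficiently large sphere) the pairing is a fixed, data-dependent number, e.g.\ it contains $-2\scl q_h^n,\bar V_0^{n+1}\scr$, which can be negative and does not grow with $\|z\|$; hence there is no radius for which the Brouwer corollary's sign condition holds on the whole sphere. The paper sidesteps this because in its fixed-point map $\mu_h$ (and $p_h$) are \emph{outputs} of uniquely solvable linear subproblems, so their bounds come from those equations (mass matrix, discrete inf-sup) via finite dimensionality, not from the energy pairing. To repair your argument you must treat $\mu$ the way you treated $p$: solve \eqref{eq:chemdiscrete} for $\mu_h^{n+1}$ (an invertible mass-matrix system in $\polQ_h$ for fixed remaining unknowns), substitute into \eqref{eq:phasediscrete} and \eqref{eq:veldiscrete}, run the Brouwer argument only in the variables $(V-\bar V_0,q,\phi,\bu)$, and recover $\mu$, then $p$, afterwards. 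With these two modifications your route can be made to work; as written, the claimed positivity on large spheres is false.
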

\begin{proof}
The idea of the proof is to use the ``\emph{method of a priori estimates}'' at each time step.
In other words, for each time step we define a map $\calL^{n+1}$ in such a way that a fixed point 
of $\calL^{n+1}$, if it exists, is a solution of our problem. Then, with the aid of the previously shown
a priori estimates we show that $\calL^{n+1}$ does indeed have a fixed point.

We proceed by induction in the discrete time and assume that we have shown that the problem
has a solution up to $n$.
For each $n = 0,...,N-1$, we define 
\begin{align*}
  \calL^{n+1}: \polW_h(\bar V_0^{n+1}) \times \polQ_h^3 \times \polX_h \times \polM_h
  &\rightarrow
  \polW_h(\bar V_0^{n+1}) \times \polQ_h^3 \times \polX_h \times \polM_h, \\
  (V_h,q_h,\phi_h,\mu_h,\bu_h,p_h)
  &\overset{\calL^{n+1}}{\longmapsto}
  ( \hat V_h, \hat q_h,\hat \phi_h,\hat \mu_h,\hat \bu_h, \hat p_h),
\end{align*}
where the quantities with hats solve
%%%%%%%%%%%%%
\begin{equation}
   \scl \vare^\star(\phi_h) \GRAD \hat V_h, \GRAD W_h \scr_{\Omega^\star}
  = \scl \hat q_h, W_h \scr, \quad \forall W_h \in \polW_h,
\label{eq:fpvoltage}
\end{equation}
%%%%%%%%%%%%%
\begin{equation}
  \scl \frac{ \hat q_h - q_h^n }\dt, r_h \scr
  - \scl q_h \bu_h, \GRAD r_h \scr
  + \scl K(\phi_h^n) \GRAD \left( \lambda \hat q_h + \hat V_h \right), \GRAD r_h \scr = 0,
  \quad \forall r_h \in \polQ_h,
\label{eq:fpcharge}
\end{equation}
\begin{equation}
  \scl \frac{\hat \phi_h - \phi_h^n }\dt, \bar\phi_h \scr
  + \scl \bu_h \SCAL \GRAD \phi_h^n, \bar\phi_h \scr
  + \scl M(\phi_h^n) \GRAD \hat \mu_h, \GRAD \bar\phi_h \scr =0, \quad \forall \bar\phi_h \in \polQ_h,
\label{eq:fpphase}
\end{equation}
\begin{multline}
  \scl \hat \mu_h, \bar\mu_h \scr
  = \frac\gamma\delta \scl \calW'(\phi_h^n) + \calA \left(\phi_h - \phi_h^n \right), \bar\mu_h \scr
  + \gamma\delta \scl \GRAD \hat \phi_h, \GRAD \bar\mu_h \scr
  + \frac12 \scl \rho'(\phi_h^n) \bu_h^n \SCAL \bu_h, \bar\mu_h \scr \\
  - \frac12 \scl \calE(\phi_h,\phi_h^n) \GRAD V_h \SCAL \GRAD \hat V_h, \bar\mu_h \scr
  + \alpha \sbl \frac{\hat \phi_h - \phi_h^n }\dt + \bu_{h\btau} \psi(\phi_h^n), \bar\mu_h \sbr \\
  + \gamma \sbl \Theta_{fs}'(\phi_h^n) + \calB \left( \phi_h - \phi_h^n \right), \bar\mu_h \sbr
  \quad \forall \bar\mu_h \in \polQ_h,
\label{eq:fpchem}
\end{multline}
\begin{multline}
  \scl \frac{ \tfrac12 \left( \rho(\phi_h) + \rho(\phi_h^n) \right) \hat \bu_h
    - \rho(\phi_h^n) \bu_h^n }\dt, \bw_h \scr
  + \scl \rho(\phi_h^n) \bu_h^n \ADV \hat \bu_h, \bw_h \scr
  + \frac12 \scl \DIV(\rho(\phi_h^n)\bu_h^n) \hat \bu_h, \bw_h \scr \\
  + \scl \eta(\phi_h^n) \bS(\hat \bu_h ), \bS(\bw_h) \scr
  - \scl \hat p_h, \DIV \bw_h \scr
  + \sbl \beta(\phi_h^n) \hat \bu_{h\btau}, \bw_{h\btau} \sbr
  + \alpha \sbl \bu_{h\btau} \psi(\phi_h^n), \bw_{h\btau} \psi(\phi_h^n) \sbr
  \\ =
  \scl \mu_h \GRAD \phi_h^n, \bw_h \scr
  - \scl q_h \GRAD( \lambda q_h + V_h), \bw_h \scr
  + \frac12 \scl \rho'(\phi_h^n) \frac{\phi_h - \phi_h^n }\dt \bu_h^n, \bw_h \scr \\
  - \alpha \sbl \frac{ \phi_h - \phi_h^n }\dt, \bw_{h\btau} \psi(\phi_h^n) \sbr
  \quad \forall \bw_h \in \polX_h,
\label{eq:fpmom}
\end{multline}
\begin{equation}
  \scl \bar{p}_h, \DIV \hat \bu_h \scr = 0, \quad \forall \bar{p}_h \in \polM_h.
\label{eq:fppres}
\end{equation}
%%%%%%%%%%%%%
Notice that a fixed point of $\calL^{n+1}$ is precisely a solution of the discrete problem
\eqref{eq:potentialdiscrete}--\eqref{eq:NSEdiscrete}.

To show the existence of a fixed point we must prove that:
\begin{itemize}
  \item The operator $\calL^{n+1}$ is well defined.
  \item If there is a $\calX=(V_h,q_h,\phi_h,\mu_h,\bu_h,p_h)$ for which
  $\calX = \omega \calL^{n+1}\calX$, for some $\omega \in [0,1]$, then
  \begin{equation}
    \| \calX \| \leq M,
  \label{eq:omegafpbdd}
  \end{equation}
  where $M>0$ does not depend on $\calX$ or $\omega$.
\end{itemize}
Then,
an application of the Leray-Schauder theorem \cite{MR1625845,MR816732} will allow us to conclude.
Moreover, since a fixed point of $\calL^{n+1}$ is precisely a solution of our problem,
Proposition~\ref{prop:denergy} gives us the desired stability estimate for this solution.

Let us then proceed to show these two points:

\noindent\underline{The operator $\calL^{n+1}$ is well defined:} Clearly, for any given $\phi_h$,
and $q_h$, the system \eqref{eq:fpvoltage}--\eqref{eq:fpcharge} is positive definite and, thus,
there are unique $\hat V_h$ and $\hat q_h$. Having computed $\hat V_h$ and $\hat q_h$ we then notice that
\eqref{eq:fpmom} and \eqref{eq:fppres} are nothing but a discrete version of a generalized Stokes
problem. Assumption \eqref{eq:LBB} then shows that there is a unique pair $(\hat\bu_h,\hat p_h)$.
To conclude, use $(\hat V_h, \hat q_h, \hat\bu_h, \hat p_h)$ as data in \eqref{eq:fpphase} and
\eqref{eq:fpchem}. The fact that this linear system has a unique solution can then be seen, for instance,
by noticing that the system matrix is positive definite.

\noindent\underline{Bounds on the operator:}
Notice, first of all, that one of the assumptions of the Leray-Schauder theorem is the 
compactness of the operator 
for which we are looking for a fixed point. However, this is trivial since the spaces 
we are working on are finite dimensional.
Let us now show the bounds noticing that, at this stage, we do not need to obtain bounds that are
independent of $h$, $\dt$ or the solution at the previous step. This will be a consequence of 
Proposition~\ref{prop:denergy}. Let us then assume that for some 
$\calX=(V_h,q_h,\phi_h,\mu_h,\bu_h,p_h)$ we have $\calX = \omega \calL^{n+1} \calX$. Notice, first
of all, that if $\omega=0$ then $\calX=0$ and the bound is trivial. If $\omega \in (0,1]$,
the existence of such element can be identified with replacing, in 
\eqref{eq:fpvoltage}--\eqref{eq:fppres}, 
$(\hat V_h, \hat q_h,\hat \phi_h, \hat \mu_h,\hat \bu_h,\hat p_h)$ by
$\omega^{-1}(V_h,q_h,\phi_h,\mu_h,\bu_h,p_h)$. Having done that, set 
$\bw_h = 2\dt u_h$ in \eqref{eq:fpmom},
$r_h = 2\dt(\lambda q_h + V_h)$ in \eqref{eq:fpcharge},
$\bar \phi_h = 2\dt \mu_h$ in \eqref{eq:fpphase} and
$\bar \mu_h = 2(\phi_h - \phi_h^n)$ in \eqref{eq:fpchem}.
Next we observe that, by induction, the equation has a solution at the previous time step, therefore
there are functions that satisfy \eqref{eq:potentialdiscrete} for time $n$. Multiply this 
identity by $\omega$ and subtract it from \eqref{eq:fpvoltage}.
Arguing as in the proof of Proposition~\ref{prop:denergy} we see that
condition \eqref{eq:stabparams} implies that to obtain the desired bound we must 
prove estimates for the terms
\[
  \scl \rho'(\phi_h^n) \bu_h^n \bu_h, \phi_h^n \scr, \quad
  \scl \mu_h, \phi_h^n \scr, \quad
  \scl q_h^n, V_h \scr,  
\]
which are, in a sense, the price we are paying for not being fully impicit. All these terms
are linear $\calX$ and, thus, can be easily bounded by taking into account that we are in finite 
dimensions and that the estimates
need not be uniform in $h$ and $\dt$.
\end{proof}

\section{Numerical Experiments}
\label{sec:NumExp}

In this section we present a series of numerical examples aimed at showing the capabilities of
the model we have proposed and analyzed. The implementation of all the numerical experiments has
been carried out with the help of the \texttt{deal.II} library \cite{BHK07,BHK} and the details
will be presented in \cite{SalgadodealII}.

Let us briefly describe the discretization technique. Its starting point is problem
\eqref{eq:potentialdiscrete}--\eqref{eq:NSEdiscrete}
which, being a nonlinear problem, we linearize with time-lagging of the variables.
Moreover, for the Cahn Hilliard Navier Stokes part we employ the fractional time-stepping technique
developed in \cite{SalgadoMCL}. In other words, at each time step we know
\[
  (V_h^n,q_h^n,\phi_h^n,\mu_h^n,\bu_h^n,p_h^n,\xi_h^n)
  \in \polW_h(\bar V_0^n) \times \polQ_h^3 \times \polX_h \times \polM_h^2,
\]
with $\xi_h^0 := 0$
and, to advance in time, solve the following sequence of discrete and linear problems:
\begin{itemize}
  \item Step 1:
\begin{description}
  \item[Potential] Find $V_h^{n+1} \in \polW_h(\bar V_0^{n+1})$ that solves:
  \[
    \scl \vare^\star(\phi_h^n) \GRAD V_h^{n+1}, \GRAD W_h \scr_{\Omega^\star}
    = \scl q_h^n, W_h \scr, \quad \forall W_h \in \polW_h,
  \]
  \item[Charge] Find $q_h^{n+1} \in \polQ_h$ that solves:
  \[
    \scl \frac{ \frakd q_h^{n+1} }\dt, r_h \scr
    - \scl q_h^n \bu_h^n, \GRAD r_h \scr
    + \scl K(\phi_h^n) \GRAD \left( \lambda q_h^{n+1} + V_h^{n+1} \right), \GRAD r_h \scr = 0,
    \quad \forall r_h \in \polQ_h,
  \]
\end{description}

  \item Step 2:
\begin{description}
  \item[Phase Field and Potential] Find $\phi_h^{n+1},\; \mu_h^{n+1} \in \polQ_h$ that solve:
  \[
    \scl \frac{\frakd \phi_h^{n+1}}\dt, \bar\phi_h \scr
    + \scl \bu_h^n \SCAL \GRAD \phi_h^n, \bar\phi_h \scr
    + \scl M(\phi_h^n) \GRAD \mu_h^{n+1}, \GRAD \bar\phi_h \scr =0, \quad \forall \bar\phi_h \in \polQ_h,
  \]
  \begin{multline*}
    \scl \mu_h^{n+1}, \bar\mu_h \scr
    = \frac\gamma\delta \scl \calW'(\phi_h^n) + \calA \frakd\phi_h^{n+1}, \bar\mu_h \scr
    + \gamma\delta \scl \GRAD \phi_h^{n+1}, \GRAD \bar\mu_h \scr \\
    - \frac12 \scl \vare'(\phi_h^n)  |\GRAD V_h^{n+1}|^2, \bar\mu_h \scr
    + \frac12 \scl \rho'(\phi_h^n) |\bu_h^n|^2, \bar\mu_h \scr \\
    + \alpha \sbl \frac{\frakd \phi_h^{n+1}}\dt + \bu_{h\btau}^n \psi(\phi_h^n), \bar\mu_h \sbr
    + \gamma \sbl \Theta_{fs}'(\phi_h^n) + \calB\frakd\phi_h^{n+1}, \bar\mu_h \sbr
    \quad \forall \bar\mu_h \in \polQ_h,
  \end{multline*}
\end{description}

  \item Step 3:
\begin{description}
  \item[Velocity] Define $p_h^\sharp = p_h^n + \xi_h^n$, then find $\bu_h^{n+1} \in \polX_h$
  such that
  \begin{multline*}
    \scl \frac{ \overline{\rho(\phi_h^{n+1})} \bu_h^{n+1} - \rho(\phi^n) \bu_h^n }\dt, \bw_h \scr
    + \scl \rho(\phi_h^n) \bu_h^n \ADV \bu_h^{n+1}, \bw_h \scr
    + \frac12 \scl \DIV(\rho(\phi_h^n)\bu_h^n) \bu_h^{n+1}, \bw_h \scr \\
    + \scl \eta(\phi_h^n) \bS(\bu_h^{n+1}), \bS(\bw_h) \scr
    - \scl p_h^\sharp, \DIV \bw_h \scr
    + \sbl \beta(\phi_h^n) \bu_{h\btau}^{n+1}, \bw_{h\btau} \sbr
    + \alpha \sbl \bu_{h\btau}^{n+1} \psi(\phi_h^n), \bw_{h\btau} \psi(\phi_h^n) \sbr
    \\ =
    \scl \mu_h^{n+1}\GRAD \phi_h^n, \bw_h \scr
    - \scl q_h^n\GRAD( \lambda q_h^{n+1} + V_h^{n+1}), \bw_h \scr
    + \frac12 \scl \rho'(\phi_h^n) \frac{\frakd \phi_h^{n+1}}\dt \bu_h^n, \bw_h \scr \\
    - \alpha \sbl \frac{\frakd \phi_h^{n+1}}\dt, \bw_{h\btau} \psi(\phi_h^n) \sbr
    \quad \forall \bw_h \in \polX_h.
  \end{multline*}
\end{description}

  \item Step 4:
\begin{description}
  \item[Penalization and Pressure] Finally, $\xi_h^{n+1}$ and $p_h^{n+1}$ are computed via
  \[
    \scl \GRAD \xi_h^{n+1}, \GRAD \bar p_h \scr = - \frac\varrho\dt \scl \DIV \bu_h^{n+1}, \bar p_h\scr,
    \quad \forall \bar p_h \in \polM_h,
  \]
  where $\varrho := \min\{\rho_1, \rho_2 \}$ and
  \[
    p_h^{n+1} = p_h^n + \xi_h^{n+1}.
  \]
\end{description}

\end{itemize}

\begin{rem}[CFL]
A variant of the subscheme used to solve for the Cahn Hilliard Navier Stokes part of our problem was
proposed in \cite{SalgadoMCL} and shown to be unconditionally stable. In that reference, however,
the equations for the phase field and
velocity are coupled via terms of the form $\scl \bu_h^{n+1}\SCAL \GRAD \phi_h^n, \bar \phi_h \scr$.
If we adopt this approach, coupling steps 2 and 3, and assume that the permittivity 
does not depend on the phase,
it seems possible to show that this variant of the scheme described above is stable under,
\[
  \dt \leq c \delta h.
\]
On the other hand, if we work with full time-lagging of the variables, then it is possible to show
that the scheme is stable under the, quite restrictive, assumption that
\[
  \dt \leq c \delta^2 h^2.
\]
To assess how extreme these conditions are one must remember that, in practice, it is necessary to
set $h = \calO(\delta)$. Nevertheless, computations show that these conditions are suboptimal and just a
standard CFL condition is necessary to guarantee stability of the scheme.
\end{rem}

\subsection{Movement of a Droplet}
\label{sub:drop_move}
The first example aims at showing that, indeed, electric actuation can be used to manipulate a
two-fluid system. The fluid occupies the domain $\Omega = (-5,5)\times(0,1)$ and above
and below there are dielectric plates of thickness
$1/2$, so that $\Omega^\star = (-5,5)\times(-1/2,3/2)$. A droplet of a heavier fluid shaped
like half a circle of radius $1/2$ is centered at the origin and initially at rest. To the right
half of lower plate we apply a voltage, so that
\[
  V_0 = V_{00} \chi_D, \qquad D = \left\{ (x,y)\in \Real^2: x \geq 0,\ y = -\frac12 \right\}.
\]

The density ratio between the two fluids is $\rho_1/\rho_2 = 100$, the viscosity ratio
$\eta_1/\eta_2 = 10$ and the surface tension coefficient is $\gamma = 50$. The conductivity ratio
is $K_1/K_2 = 10$ and the permittivity ratio
$\vare_1/\vare_2 = 5$ and $\vare_D/\vare_2 = 100$. We have set the mobility parameter to be constant
$M = 10^{-2}$, and $\alpha = 10^{-3}$. The slip coefficient is taken constant $\beta = 10$, and
the equilibrium contact angle between the two fluids is $\theta_s = 120^\circ$.
The interface thickness is $\delta = 5\cdot10^{-2}$ and the regularization parameter $\lambda = 0.5$.
The applied voltage is $V_{00} = 20$.

The time-step is set constant and $\dt = 10^{-3}$.
The initial mesh consists of $5364$ cells with two different levels of refinement. Away from the two-fluid
interface the local mesh size is about $0.125$ and, near the interface, the local mesh size is about
$0.03125$. As required in \texttt{deal.II}, the degree of nonconformity of the mesh is restricted to 1 \ie
there is only one hanging node per face. Every $10$ time-steps the mesh is coarsened and refined using as,
heuristic, refinement indicator the $\bL^2$-norm of the gradient of the phase field variable $\phi$. The
number of coarsened and refined cells is such that we try to keep the number of cells constant.

The discrete spaces are constructed with finite elements 
with equal polynomial degree in each coordinate direction and
\[
  \deg \polW_h = 1, \quad 
  \deg\polQ_h = 2, \quad
  \deg\polX_h = 2, \quad \deg\polM_h = 1,
\]
that is the lowest order
quadrilateral Taylor-Hood element. No stabilization is added to the momentum conservation equation,
nor the convection diffusion equation used to define the charge density.

\begin{figure}
\label{fig:drop_move}
\includegraphics[scale=0.25]{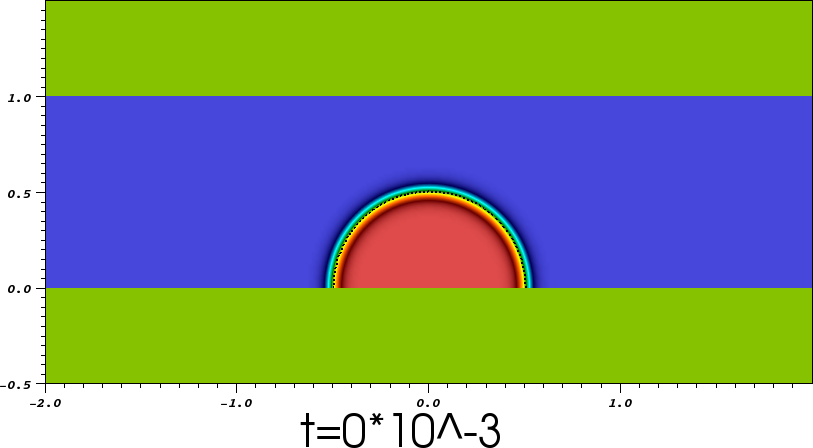}\hfil
\includegraphics[scale=0.25]{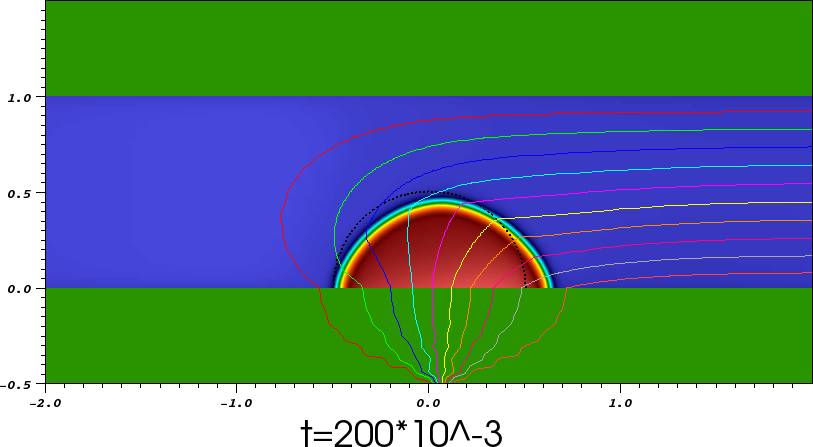} \\
\includegraphics[scale=0.25]{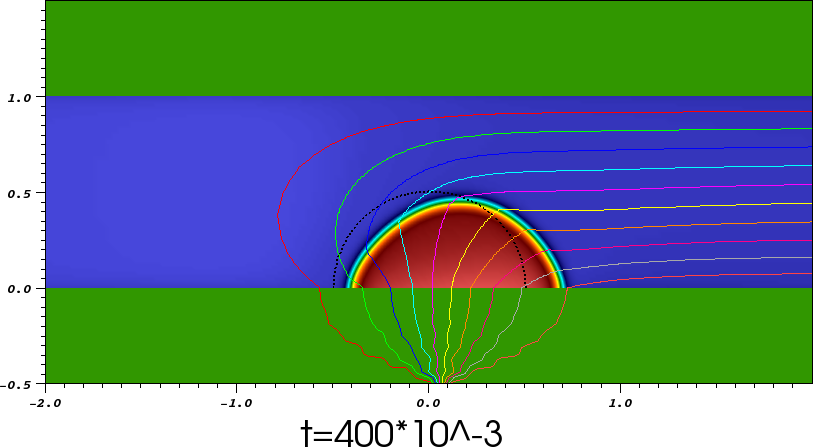}\hfil
\includegraphics[scale=0.25]{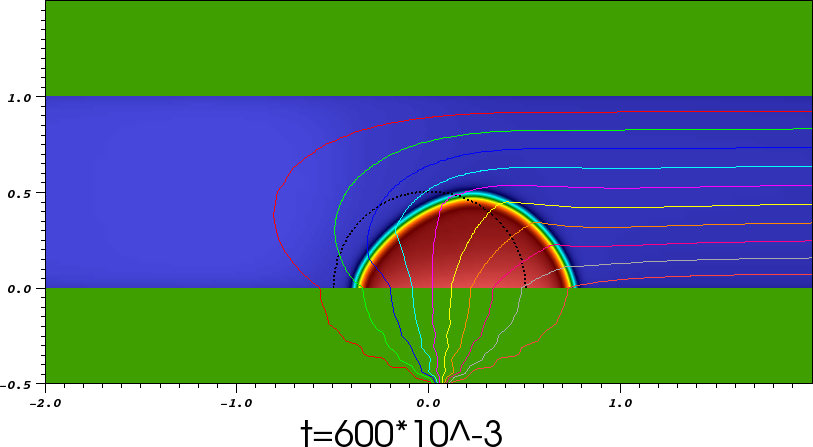} \\
\includegraphics[scale=0.25]{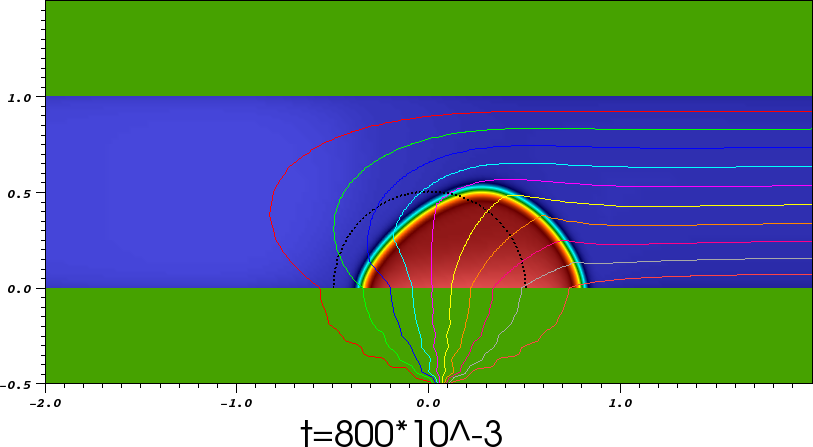}\hfil
\includegraphics[scale=0.25]{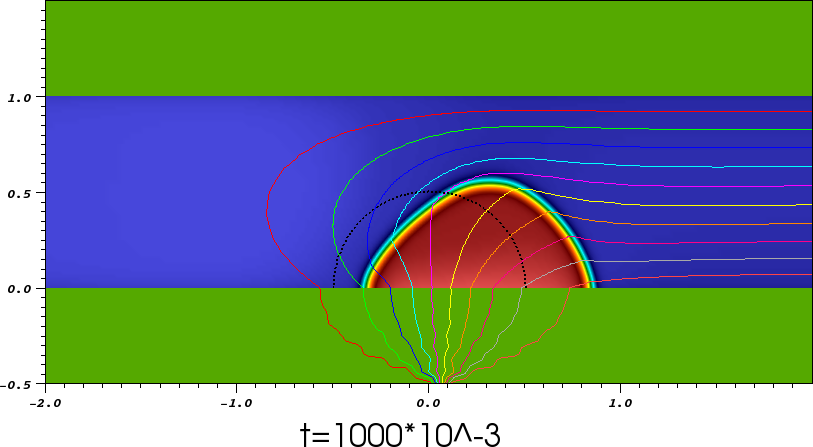} \\
\includegraphics[scale=0.25]{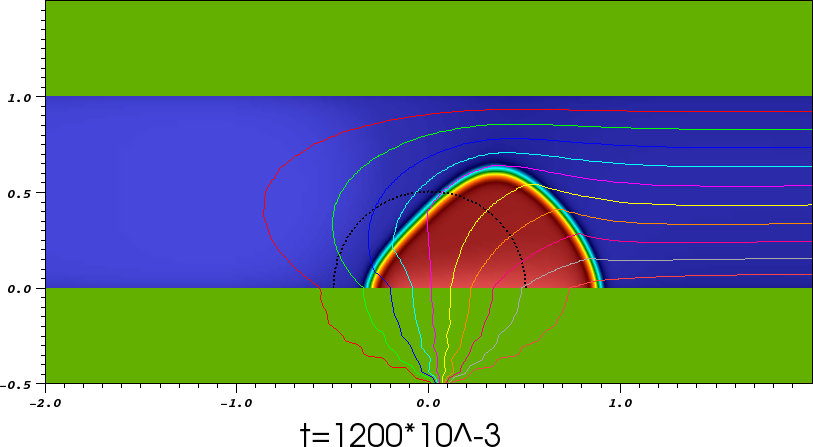}\hfil
\includegraphics[scale=0.25]{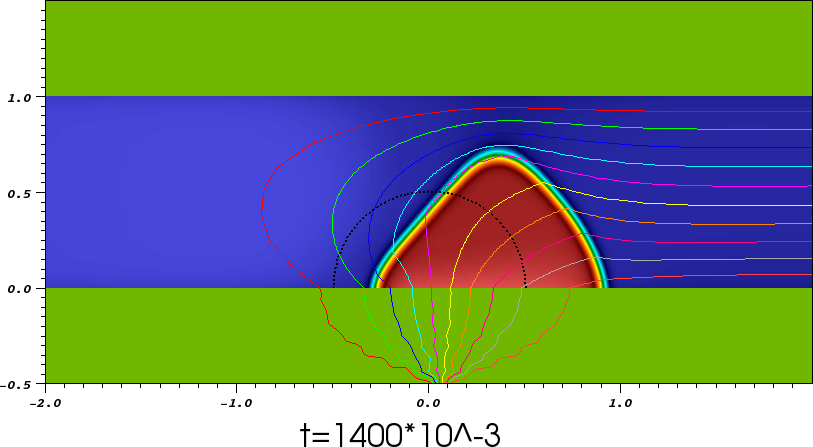}
\caption{Movement of a droplet under the action of an external voltage. The material parameters are
$\rho_1/\rho_2 = 100$, $\eta_1/\eta_2 = 10$, $\gamma = 50$, $K_1/K_2 = 10$, $\vare_1/\vare_2 = 5$,
$\vare_D/\vare_2 = 100$, $M = 10^{-2}$, $\alpha = 10^{-3}$, $\beta = 10$, $\theta_s = 120^\circ$,
$\delta = 5\cdot10^{-2}$, $\lambda = 0.5$ and $V_{00} = 20$. The interface is shown at times
$0$, $0.2$, $0.4$, $0.6$, $0.8$, $1.0$, $1.2$ and $1.4$.
Colored lines are used to represent the iso-values of the voltage. The black dotted line is
the position of the interface at the beginning of the computations.}
\end{figure}

Figure~\ref{fig:drop_move} shows the evolution of the interface.
Notice that, other than adapting the mesh so as to resolve the interfacial layer, no other special
techniques are applied to obtain these results. As expected, the applied voltage creates a 
local modification variation in the value of the surface tension between the two fluids, which in turn
generates a forcing term that drives the droplet.

\subsection{Splitting of a Droplet}
\label{sub:drop_split}
One of the main arguments in favor of diffuse interface models is their ability to handle topological
changes automatically. The purpose of this numerical simulation is to illustrate this by showing
that, using electrowetting, one can split a droplet and, thus, control fluids.
Initially a drop of heavier material occupies
\[
  S_{\rho_2} = \left\{ (x,y) \in \Real^2:\ \frac{x^2}{2.5^2} + \frac{y^2}{0.5^2} = 1 \right\}.
\]
The material parameters are the same as in \S\ref{sub:drop_move}. To be able to split the droplet,
the externally applied voltage is
\[
  D = \left\{ (x,y)\in \Real^2: |x| \geq \frac32,\ y = -\frac12 \right\}.
\]

\begin{figure}
\label{fig:drop_split}
\includegraphics[scale=0.15]{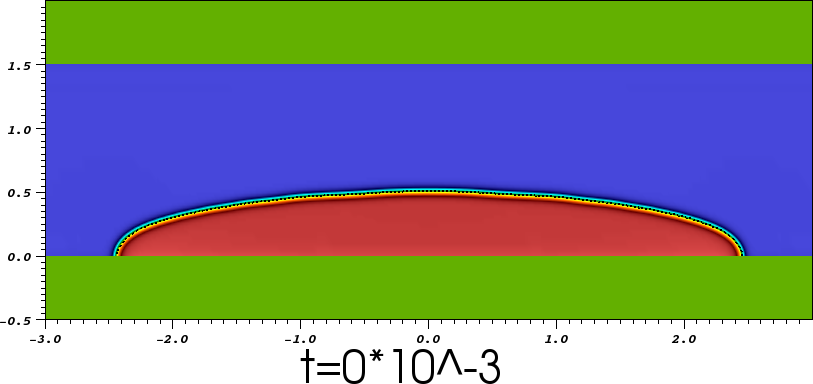}\hfil
\includegraphics[scale=0.15]{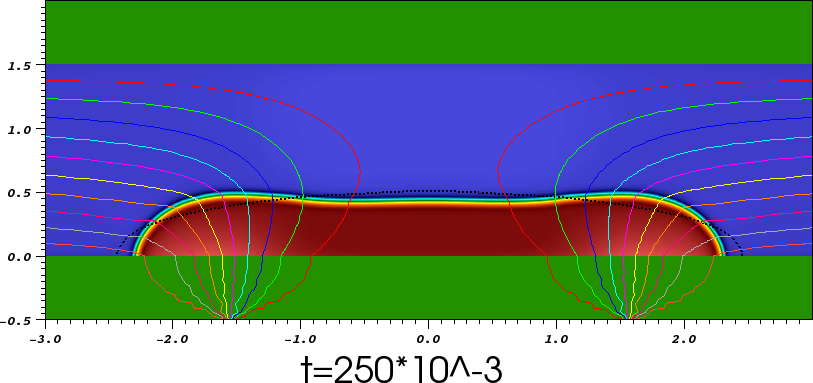}\hfil
\includegraphics[scale=0.15]{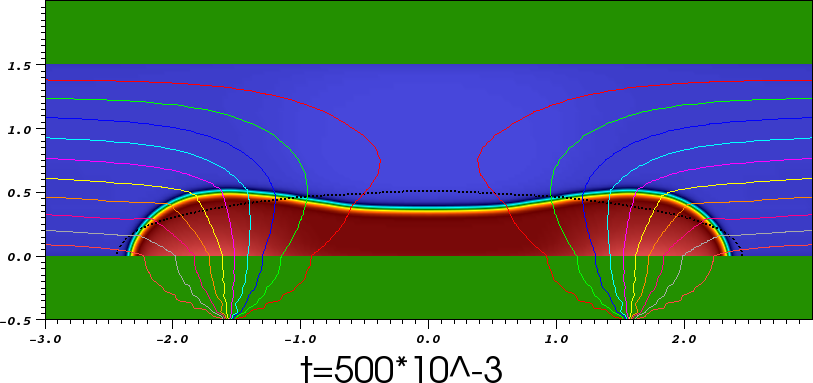}\\
\includegraphics[scale=0.15]{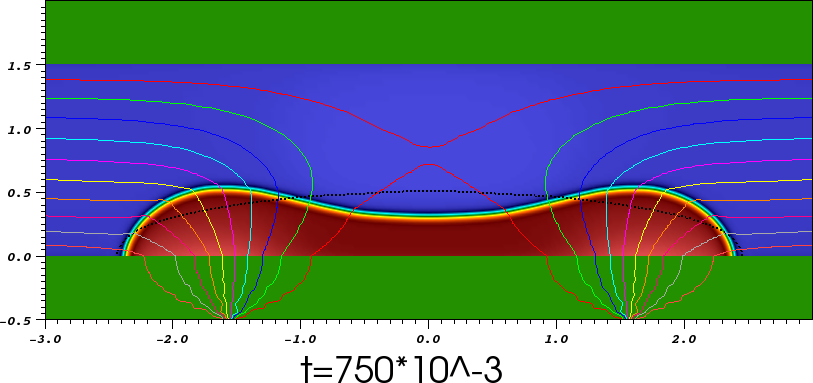}\hfil
\includegraphics[scale=0.15]{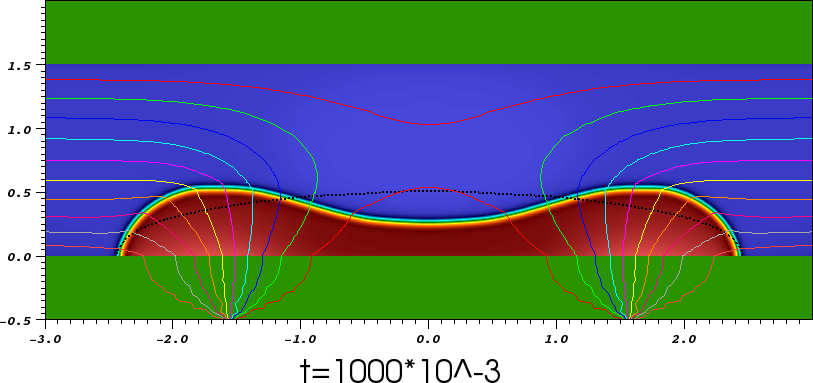}\hfil
\includegraphics[scale=0.15]{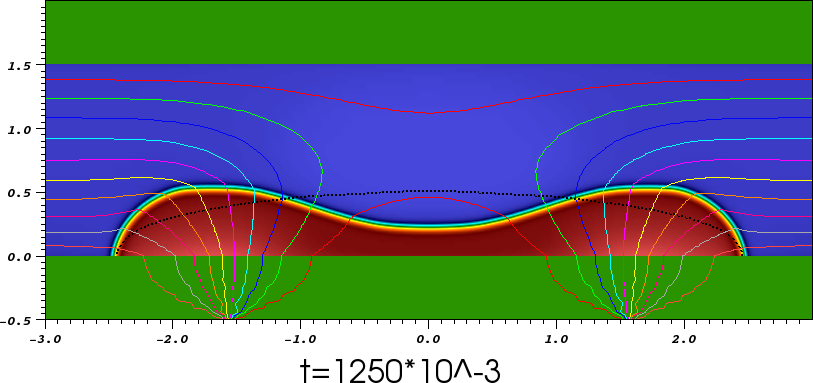}\\
\includegraphics[scale=0.15]{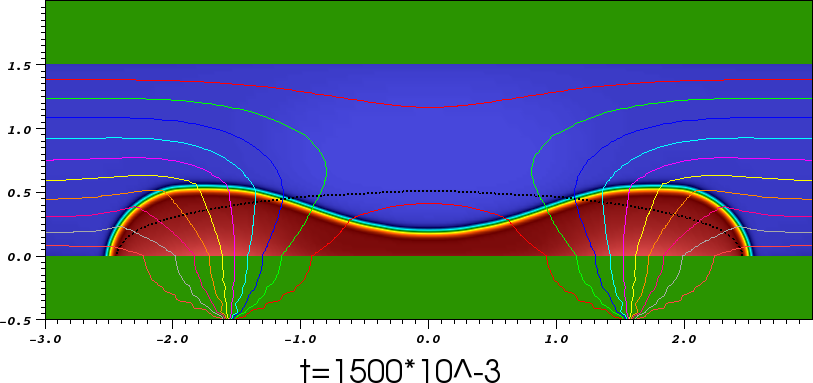}\hfil
\includegraphics[scale=0.15]{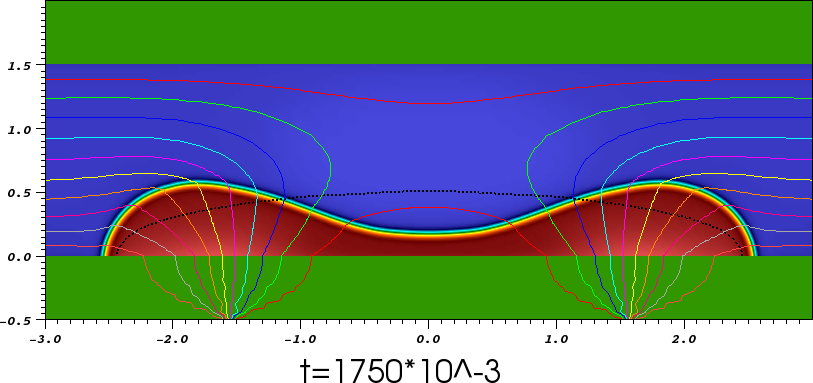}\hfil
\includegraphics[scale=0.15]{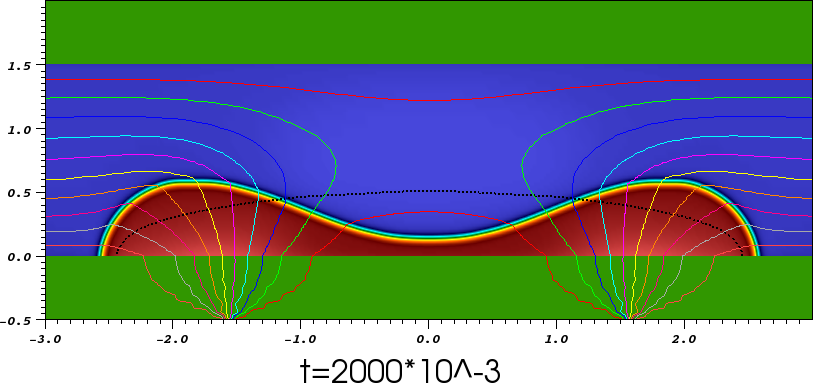}\\
\includegraphics[scale=0.15]{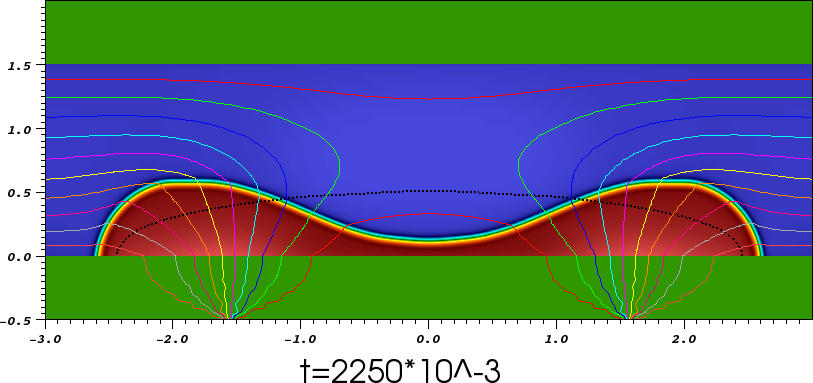}\hfil
\includegraphics[scale=0.15]{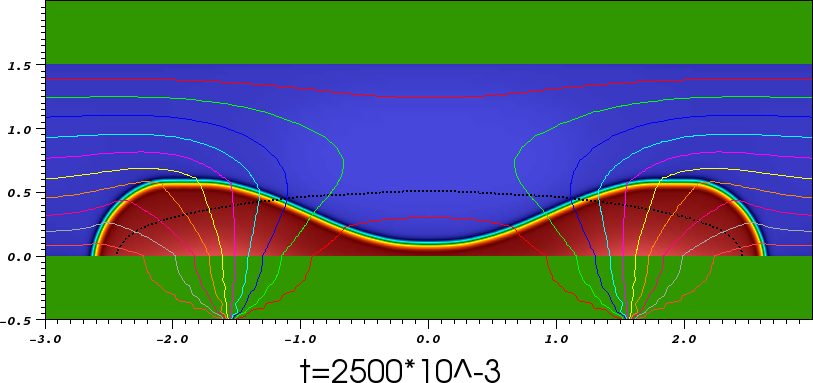}\hfil
\includegraphics[scale=0.15]{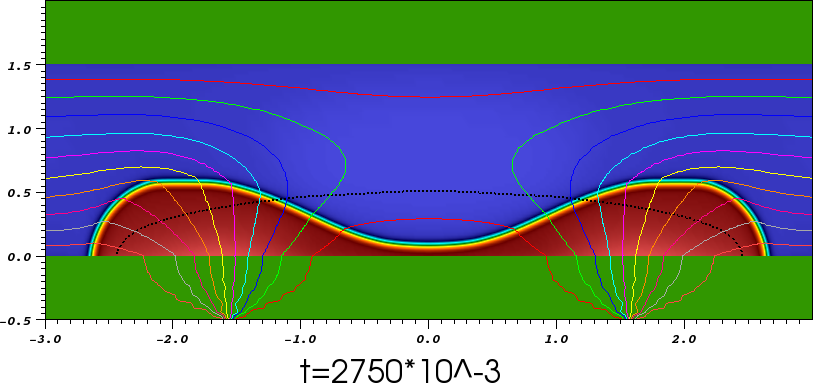}\\
\includegraphics[scale=0.15]{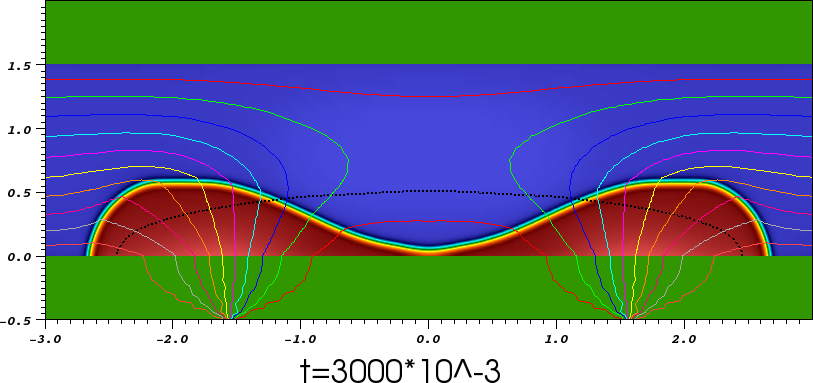}\hfil
\includegraphics[scale=0.15]{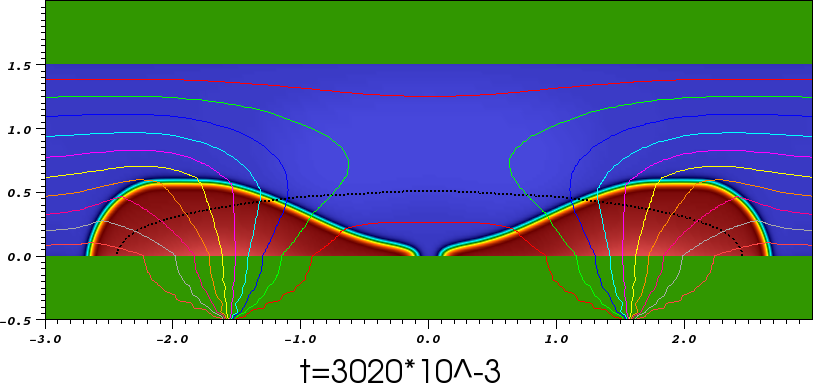}\hfil
\includegraphics[scale=0.15]{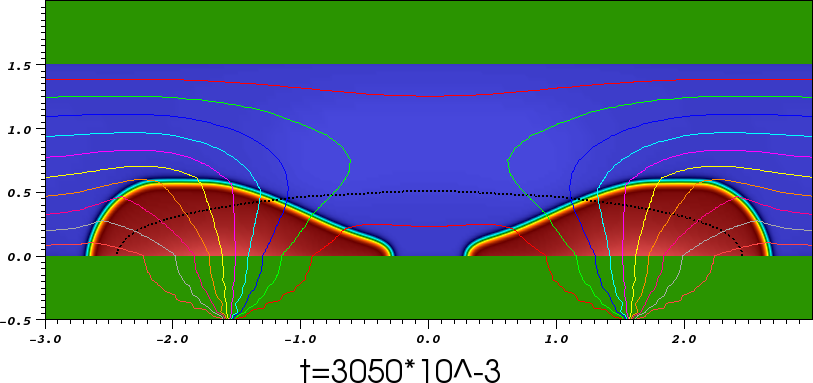}\\
\includegraphics[scale=0.15]{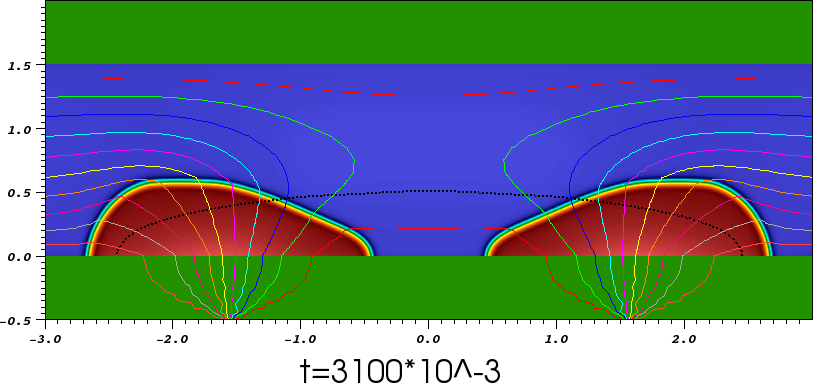}\hfil
\includegraphics[scale=0.15]{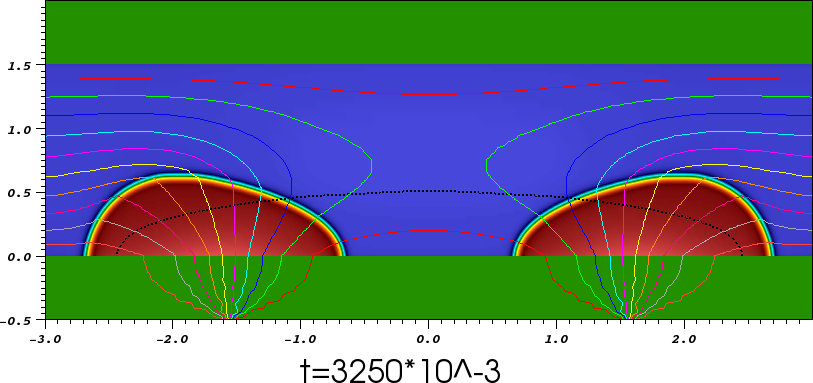}\hfil
\includegraphics[scale=0.15]{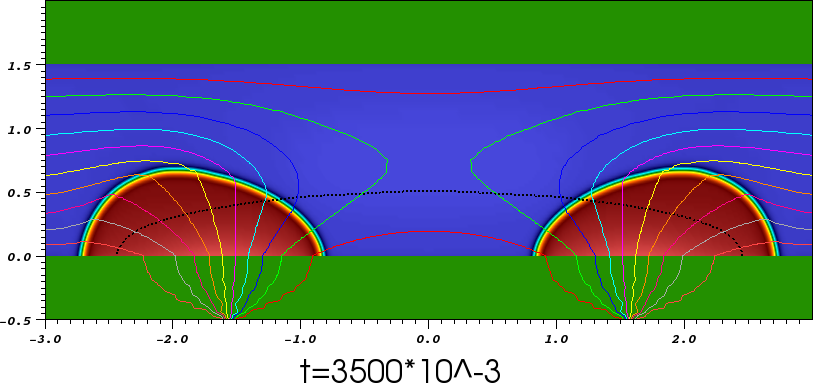}
\caption{Splitting of a droplet under the action of an external voltage. The material parameters are
$\rho_1/\rho_2 = 100$, $\eta_1/\eta_2 = 10$, $\gamma = 50$, $K_1/K_2 = 10$, $\vare_1/\vare_2 = 5$,
$\vare_D/\vare_2 = 100$, $M = 10^{-2}$, $\alpha = 10^{-3}$, $\beta = 10$, $\theta_s = 120^\circ$,
$\delta = 5\cdot10^{-2}$, $\lambda = 0.5$ and $V_{00} = 20$. The interface is shown at times
$0$, $0.25$, $0.5$, $0.75$, $1.0$, $1.25$, $1.5$, $1.75$, $2.0$, $2.25$, $2.5$, $2.75$, $3.0$,
$3.02$, $3.05$, $3.10$, $3.25$ and $3.5$.
Colored lines are used to represent the iso-values of the voltage. The black dotted line is
the position of the interface at the beginning of the computations.}
\end{figure}

Figure~\ref{fig:drop_split} shows the evolution of the system.
Notice that, other than adapting the mesh so as to resolve the interfacial layer, nothing else is done
and the topological change is handled without the necessity to detect it or to adapt the time-step.

\subsection{Merging of Two Droplets}
\label{sub:drop_merge}
To finalize let us show an example illustrating the merging of two droplets of the same material via electric
actuation. The geometrical configuration is the same as in \S\ref{sub:drop_split}. In this case, however, there are initially
two droplets of heavier material, each one of radius $0.5$ and centered at $(-0.7,0)$ and $(0.7,0)$, respectively.
The material parameters are the same as in \S\ref{sub:drop_split}, except the interfacial thickness, which is set to
$\delta=10^{-2}$. We apply an external voltage so that
\[
  D = \left\{ (x,y) \in \Real^2: \ |x| \leq \frac12,\ y = -\frac12 \right\}.
\]

To be able to capture the fine interfacial dynamics that merging possesses, we set the initial level of refinement to $4$, with
$3$ extra refinements near the interface, so that the number of cells is $48,696$ with a local mesh size away of the interface of
about $0.02875$ and near the interface of about $6\cdot10^{-3}$. This amounts to a total of $147,249$ degrees of freedom. The
time-step, again, is set to $\dt = 10^{-3}$.

\begin{figure}
\label{fig:drop_merge}
\includegraphics[scale=0.25]{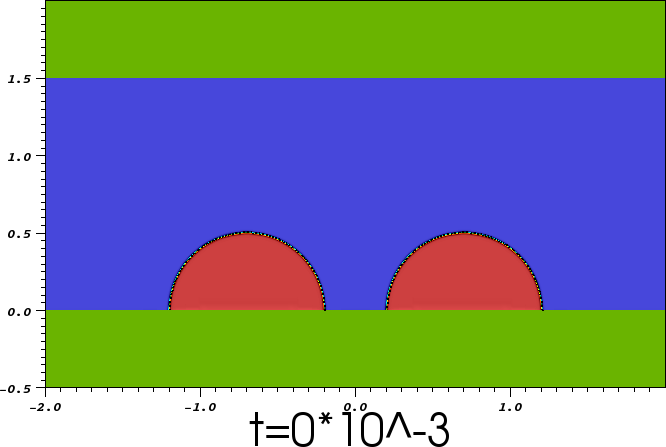}\hfil
\includegraphics[scale=0.25]{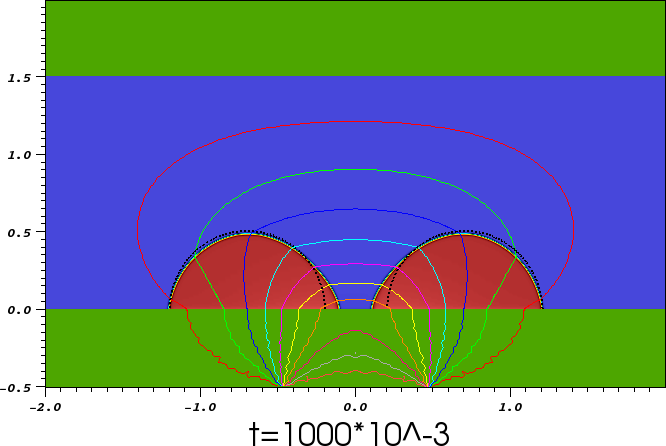}\\
\includegraphics[scale=0.25]{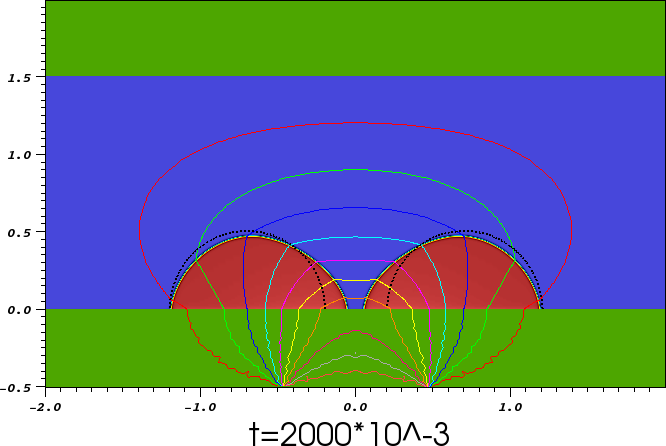}\hfil
\includegraphics[scale=0.25]{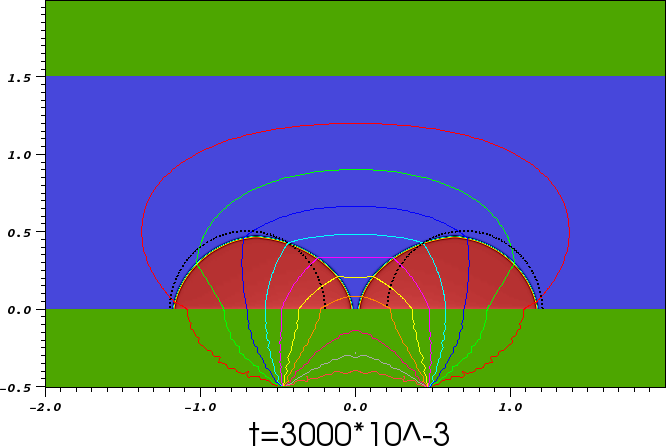}\\
\includegraphics[scale=0.25]{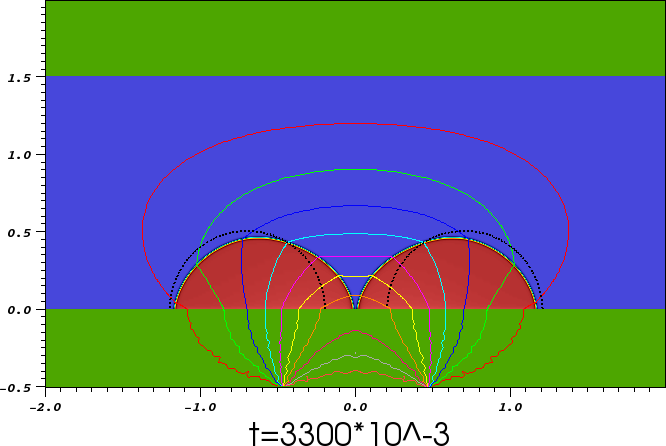}\hfil
\includegraphics[scale=0.25]{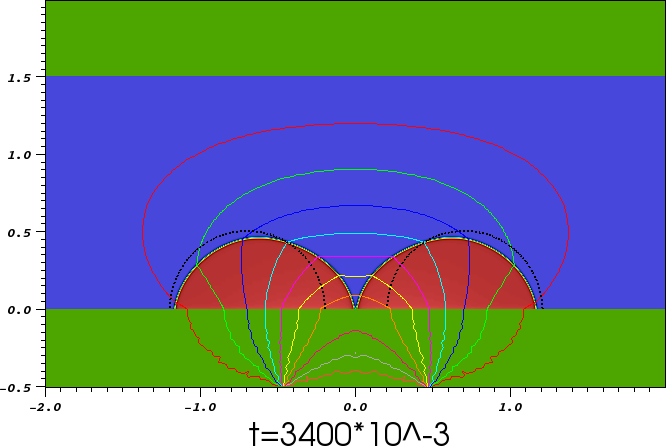}\\
\includegraphics[scale=0.25]{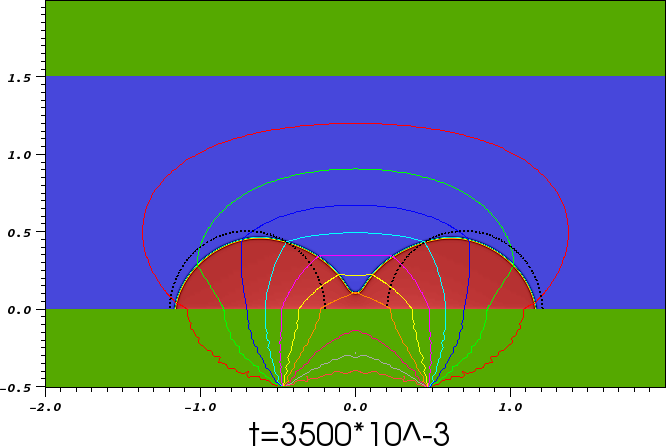}\hfil
\includegraphics[scale=0.25]{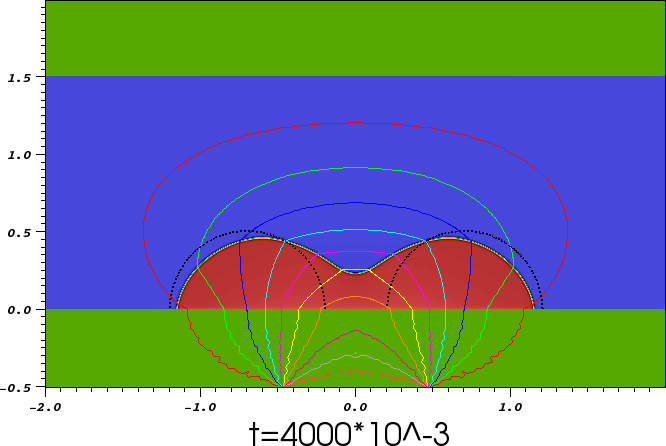}\\
\includegraphics[scale=0.25]{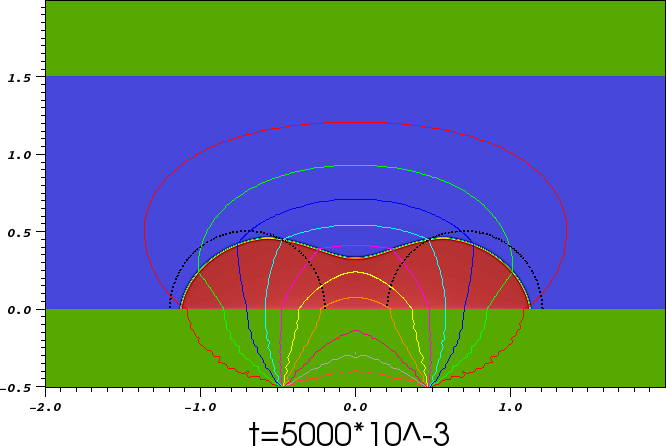}\hfil
\includegraphics[scale=0.25]{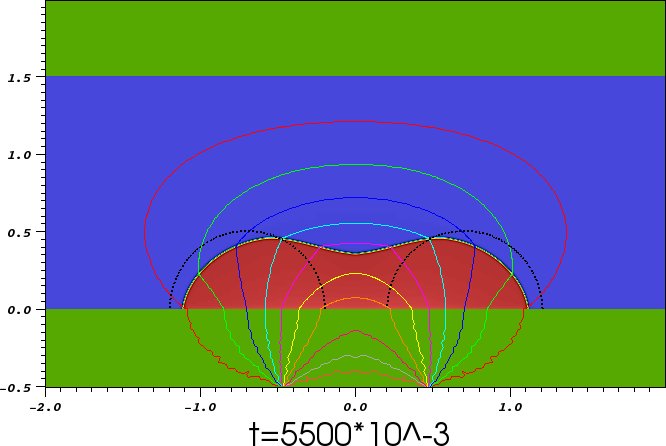}
\caption{Merging of two droplets under the action of an externally applied voltage.
The material parameters are
$\rho_1/\rho_2 = 100$, $\eta_1/\eta_2 = 10$, $\gamma = 50$, $K_1/K_2 = 10$, $\vare_1/\vare_2 = 5$,
$\vare_D/\vare_2 = 100$, $M = 10^{-2}$, $\alpha = 10^{-3}$, $\beta = 10$, $\theta_s = 120^\circ$,
$\delta = 10^{-2}$, $\lambda = 0.5$ and $V_{00} = 20$. The interface is shown at times
$0$, $1$, $2$, $3$, $3.3$, $3.4$, $3.5$, $4$, $5$ and $5.5$.
Colored lines are used to represent the iso-values of the voltage. The black dotted line is
the position of the interface at the beginning of the computations.}
\end{figure}

Figure~\ref{fig:drop_merge} shows the evolution of the two droplets under the action of the voltage.
Again, other than properly resolving the interfacial layer, we did not need to do anything special to handle 
the topological change.

\section{The Semi-Discrete Problem}
\label{sec:semidiscrete}
In \S\ref{sub:stability} we showed that the fully discrete problem always has a solution and that, moreover,
this solution satisfies certain \emph{a priori} estimates. Our purpose here is to pass to the limit
for $h \rightarrow 0$ so as to show that a semi-discrete (that is continuous in space and discrete in time)
version of our electrowetting model always has a solution.

Let us begin by defining the semi-discrete
problem. Given initial data and an external voltage, we find:
\[
  \left\{V_{\dt}-\bar{V}_{0,\dt},q_{\dt},\phi_{\dt},\mu_{\dt},\bu_{\dt},p_{\dt}\right\}
  \subset \Hunstar \times \Hun^3 \times \bV \times \tildeLdeux
\]
that solve:
\begin{description}
  \item[Initialization] For $n=0$, let $q^0$, $\phi^0$ and $\bu^0$ equal
  the initial charge, phase field and velocity, respectively.
  \item[Time Marching] For $0 \leq n \leq N-1$ we compute
  \[
    (V^{n+1},q^{n+1},\phi^{n+1},\mu^{n+1},\bu^{n+1},p^{n+1})
    \in \Hunstar + \bar V_0^{n+1} \times \Hun^3 \times \bV \times \tildeLdeux,
  \]
  that solve:
  \begin{equation}
    \scl \vare^\star(\phi^{n+1}) \GRAD V^{n+1}, \GRAD W \scr_{\Omega^\star}
    = \scl q^{n+1}, W \scr, \quad \forall W \in H^1_0(\Omega^\star),
  \label{eq:potentialsemidiscrete}
  \end{equation}
  \begin{equation}
    \scl \frac{ \frakd q^{n+1} }\dt, r \scr
    - \scl q^n \bu^{n+1}, \GRAD r \scr
    + \scl K(\phi^n) \GRAD \left( \lambda q^{n+1} + V^{n+1} \right), \GRAD r \scr
    = 0, \quad \forall r \in \Hun
  \label{eq:chargesemidiscrete}
  \end{equation}
  \begin{equation}
    \scl \frac{\frakd \phi^{n+1}}\dt, \bar\phi \scr
    + \scl \bu^{n+1} \SCAL \GRAD \phi^n, \bar\phi \scr
    + \scl M(\phi^n) \GRAD \mu^{n+1}, \GRAD \bar\phi \scr
    =0, \quad \forall \bar\phi \in \Hun
  \label{eq:phasesemidiscrete}
  \end{equation}
  \begin{multline}
    \scl \mu^{n+1}, \bar\mu \scr
    = \frac\gamma\delta \scl \calW'(\phi^n) + \calA \frakd\phi^{n+1}, \bar\mu \scr
    + \gamma\delta \scl \GRAD \phi^{n+1}, \GRAD \bar\mu \scr
    + \frac12 \scl \rho'(\phi^n) \bu^n \SCAL \bu^{n+1}, \bar\mu \scr \\
    - \frac12 \scl \calE(\phi^{n+1},\phi^n)|\GRAD V^{n+1}|^2, \bar\mu \scr
    + \alpha \sbl \frac{\frakd \phi^{n+1}}\dt + \bu_\btau^{n+1}\psi(\phi^n), \bar\mu \sbr \\
    + \gamma \sbl \Theta_{fs}'(\phi^n) + \calB\frakd\phi^{n+1}, \bar\mu \sbr
    \quad \forall \bar\mu \in \Hun \cap L^\infty(\Omega),
  \label{eq:chemsemidiscrete}
  \end{multline}
  \begin{subequations}
  \label{eq:NSEsemidiscrete}
  \begin{multline}
  \label{eq:velsemidiscrete}
    \scl \frac{ \overline{\rho(\phi^{n+1})} \bu^{n+1} - \rho(\phi^n) \bu^n }\dt, \bw \scr
    + \scl \rho(\phi^n) \bu^n \ADV \bu^{n+1}
    + \frac12 \DIV(\rho(\phi^n)\bu^n) \bu^{n+1}, \bw \scr \\
    + \scl \eta(\phi^n) \bS(\bu^{n+1}), \bS(\bw) \scr
    - \scl p^{n+1}, \DIV \bw \scr
    + \sbl \beta(\phi^n) \bu_{\btau}^{n+1}, \bw_{\btau} \sbr
    + \alpha \sbl \bu_{\btau}^{n+1} \psi(\phi^n), \bw_{\btau} \psi(\phi^n) \sbr \\
     = \scl \mu^{n+1}\GRAD \phi^n, \bw \scr
    - \scl q^n \GRAD( \lambda q^{n+1} + V^{n+1}), \bw \scr
    + \frac12 \scl \rho'(\phi^n) \frac{\frakd \phi^{n+1}}\dt \bu^n, \bw \scr \\
    - \alpha \sbl \frac{\frakd \phi^{n+1}}\dt, \bw_{\btau} \psi(\phi^n) \sbr,
    \quad \forall \bw \in \bV,
  \end{multline}
  \begin{equation}
  \label{eq:pressemidiscrete}
    \scl \bar{p}, \DIV \bu^{n+1} \scr = 0, \quad \forall \bar{p}\in \tildeLdeux.
  \end{equation}
  \end{subequations}
\end{description}

\begin{rem}[Permittivity]
Notice that, in our definition of solution, the test function for equation \eqref{eq:chemsemidiscrete} 
needs to be bounded. This is necessary to make sense of the term
\[
  \scl \calE(\phi^{n+1},\phi^n) |\GRAD V^{n+1}|^2, \bar\mu \scr,
\]
since $\calE$ is bounded by construction and $V^{n+1} \in \Hun$.
The authors of \cite{MR2511642} used a similar choice of test functions and showed, using different
techniques, existence of a solution for their model of electrowetting in the case when 
the permittivity is phase-dependent.
\end{rem}

Since the solution to the fully discrete problem \eqref{eq:potentialdiscrete}--\eqref{eq:NSEdiscrete}
exists for all values of $h>0$ and satisfies uniform bounds, one expects the sequence of discrete
solutions to converge, in some topology, and that the limit is a solution of problem
\eqref{eq:potentialsemidiscrete}--\eqref{eq:NSEsemidiscrete}. The following
result shows that this is indeed the case.

\begin{thm}[Existence and stability]
\label{thm:semidiscreteexists}
For all $\dt>0$, problem \eqref{eq:potentialsemidiscrete}--\eqref{eq:NSEsemidiscrete} has a solution.
Moreover, this solution satisfies an energy estimate, analogous to \eqref{eq:denergy}, where
the constant $c$ might depend on $\dt$ and the data of the problem, but not on the solution.
\end{thm}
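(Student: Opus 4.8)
The plan is to fix $\dt>0$ and realize the semi-discrete solution as a limit of the fully discrete ones as $h\to0$. For every $h>0$, Theorem~\ref{cor:d-existence} furnishes a solution of \eqref{eq:potentialdiscrete}--\eqref{eq:NSEdiscrete} which, by Proposition~\ref{prop:denergy}, satisfies \eqref{eq:denergy} with a constant $c$ independent of $h$. I would argue by induction on the time level $n$, of which there are only finitely many: assuming the semi-discrete equations have been solved up to level $n$ and that the level-$n$ fully discrete quantities converge (along a subsequence) to the level-$n$ limits, I would pass to the limit in the level-$(n+1)$ equations, the base case being the assumed convergence of the discrete initial data. Since $\bigcup_h\polY_h$ is dense in each continuous space $\polY$ and the spaces are nested, it suffices to test with a fixed function lying in some $\polY_{h_0}$ and then invoke density. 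The stability estimate for the limit is inherited from \eqref{eq:denergy} by weak lower semicontinuity, the constant now being allowed to depend on $\dt$.

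First I would extract the convergent subsequences at level $n+1$. The bound \eqref{eq:denergy} shows $\bu_h^{n+1}$, $q_h^{n+1}$ and $V_h^{n+1}-\bar V_0^{n+1}$ are bounded in $\bH^1$, $H^1$ and $H^1(\Omega^\star)$; the phase field $\phi_h^{n+1}$ is bounded in $H^1$ once the $\ell^\infty(\bL^2)$ bound on $\GRAD\phi$ is combined with the $\ell^\infty(L^1)$ bound on $\calW(\phi)$, whose quadratic growth away from $[-1,1]$ controls $\|\phi_h^{n+1}\|_{L^2}$; and testing \eqref{eq:chemdiscrete} with a constant bounds the mean of $\mu_h^{n+1}$, so that the $\ell^2(\bL^2)$ bound on $\GRAD\mu$ yields, via Poincar\'e, an $H^1$ bound on $\mu_h^{n+1}$. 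The compactness of the embedding of $H^1$ into $L^2$ (indeed into $L^p$ for $p<2^\ast$) then provides, on a subsequence, strong $L^p$ convergence of $\bu_h^{n+1}$, $q_h^{n+1}$, $\phi_h^{n+1}$, $\mu_h^{n+1}$, $V_h^{n+1}$, with the gradients converging only weakly in $L^2$. In particular $\phi_h^n,\phi_h^{n+1}\to\phi^n,\phi^{n+1}$ a.e., so by continuity and boundedness the phase-dependent coefficients $\rho(\phi_h^n)$, $\eta(\phi_h^n)$, $M(\phi_h^n)$, $K(\phi_h^n)$, $\vare^\star(\phi_h^{n+1})$ and $\calE(\phi_h^{n+1},\phi_h^n)$ all converge strongly by dominated convergence.

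The hard part will be upgrading the convergence of $\GRAD V_h^{n+1}$ to a \emph{strong} one in $\bL^2(\Omega^\star)$, which is needed for the permittivity term. I would test the discrete Poisson equation \eqref{eq:potentialdiscrete} with the admissible $W_h=V_h^{n+1}-\bar V_0^{n+1}$ to get
\[
  \int_{\Omega^\star}\vare^\star(\phi_h^{n+1})|\GRAD V_h^{n+1}|^2
  = \scl q_h^{n+1}, V_h^{n+1}-\bar V_0^{n+1}\scr
  + \int_{\Omega^\star}\vare^\star(\phi_h^{n+1})\GRAD V_h^{n+1}\SCAL\GRAD\bar V_0^{n+1}.
\]
On the right, the first term converges by strong $L^2$ convergence of both factors, while in the second $\vare^\star(\phi_h^{n+1})\GRAD\bar V_0^{n+1}\to\vare^\star(\phi^{n+1})\GRAD\bar V_0^{n+1}$ strongly and $\GRAD V_h^{n+1}$ weakly; using the limit Poisson equation tested with $V^{n+1}-\bar V_0^{n+1}$, the whole right-hand side tends to $\int_{\Omega^\star}\vare^\star(\phi^{n+1})|\GRAD V^{n+1}|^2$. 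Hence the weighted gradient norms converge, and expanding $\int_{\Omega^\star}\vare^\star(\phi_h^{n+1})|\GRAD(V_h^{n+1}-V^{n+1})|^2$ (strong coefficient, weak gradient) shows it vanishes in the limit; since the permittivity is bounded below, $\GRAD V_h^{n+1}\to\GRAD V^{n+1}$ strongly in $\bL^2(\Omega^\star)$.

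With this in hand the remaining limits are routine. The convective and coupling terms $\scl\bu_h^{n+1}\SCAL\GRAD\phi_h^n,\bar\phi\scr$, $\scl\mu_h^{n+1}\GRAD\phi_h^n,\bw\scr$, $\scl q_h^n\GRAD(\lambda q_h^{n+1}+V_h^{n+1}),\bw\scr$ and the Navier--Stokes nonlinearity each factor into a quantity converging strongly in a high Lebesgue space (via the compact embeddings, e.g. $q_h^n\to q^n$ in $L^3$, $\bu_h^n\to\bu^n$ in $\bL^4$) paired with one converging weakly in $L^2$, the fixed test function supplying the missing integrability through $\bH^1\hookrightarrow\bL^6$. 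The only genuinely delicate term is the permittivity contribution $\scl\calE(\phi_h^{n+1},\phi_h^n)|\GRAD V_h^{n+1}|^2,\bar\mu\scr$ in \eqref{eq:chemdiscrete}: here $|\GRAD V_h^{n+1}|^2$ would only converge weakly under weak gradient convergence, so I would invoke the strong convergence of $\GRAD V_h^{n+1}$ just established, together with boundedness and continuity of $\calE$ and the requirement $\bar\mu\in\Hun\cap L^\infty(\Omega)$, to pass to the limit; this is precisely why the solution concept restricts the test space of \eqref{eq:chemsemidiscrete}. Passing to the limit in all equations completes the induction, and weak lower semicontinuity applied to \eqref{eq:denergy} delivers the claimed energy estimate.
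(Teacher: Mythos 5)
Your argument has a genuine gap: the pressure. The semi-discrete problem asks for $p^{n+1}\in\tildeLdeux$ satisfying \eqref{eq:velsemidiscrete}, whose weak form contains the term $\scl p^{n+1},\DIV\bw\scr$, yet $p_h^{n+1}$ never appears among the quantities for which you extract a convergent subsequence --- and it cannot be obtained from your compactness paragraph, because the energy estimate \eqref{eq:denergy} gives no control whatsoever on the pressure. Without a bound on $\|p_h^{n+1}\|_{L^2}$ uniform in $h$ (for fixed $\dt$) you have no weak $\Ldeux$ limit, and the passage to the limit in the momentum equation fails. The paper closes this hole by combining the LBB condition \eqref{eq:LBB} with the discrete momentum equation \eqref{eq:veldiscrete} itself: every term of \eqref{eq:veldiscrete} other than the pressure is estimated by quantities controlled in \eqref{eq:denergy}, which yields $c\|p_h^{n+1}\|_{L^2}\leq c/\dt$ and hence a weakly convergent subsequence. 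Note that this estimate is precisely the source of the $\dt$-dependence of the constant announced in the theorem; you state that the constant ``is now allowed to depend on $\dt$,'' but your argument never produces a term that actually requires this, which is a symptom of the omission. (Alternatively one could try to eliminate the pressure by testing only with discretely divergence-free $\bw_h$ and recover $p^{n+1}$ a posteriori from the continuous inf-sup condition, but identifying limits of discretely divergence-free fields with divergence-free fields requires an extra argument you do not supply.)

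Apart from this, your strategy coincides with the paper's (fully discrete solutions, uniform bounds, compactness, a.e.\ convergence of the phase-dependent coefficients, strong convergence of $\GRAD V_h^{n+1}$ to handle the permittivity term against $\bar\mu\in\Hun\cap L^\infty(\Omega)$), and your route to the strong $\bL^2(\Omega^\star)$ convergence of $\GRAD V_h^{n+1}$ is a correct, genuinely different alternative. The paper introduces an elliptic projection $\calP_h V^{n+1}$ and splits the weighted error $\scl \vare^\star(\phi_h^{n+1})\GRAD(V_h^{n+1}-V^{n+1}),\GRAD(V_h^{n+1}-V^{n+1})\scr_{\Omega^\star}$ into three terms, using the discrete equation against $V_h^{n+1}-\calP_h V^{n+1}$; you instead test the discrete equation with $V_h^{n+1}-\bar V_0^{n+1}$, prove convergence of the weighted Dirichlet energies, and conclude by expanding the weighted norm of the difference, using that $\vare^\star$ is bounded below --- the classical weak-convergence-plus-norm-convergence trick, which avoids the projection altogether. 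One caveat of ordering: since you invoke ``the limit Poisson equation'' inside this upgrade, you must first establish \eqref{eq:potentialsemidiscrete} for the limit, which is legitimate (it needs only weak gradient convergence, a.e.\ convergence of $\vare^\star(\phi_h^{n+1})$, and the nestedness/density of the discrete spaces), but should be stated explicitly to avoid the appearance of circularity. Finally, your ``routine'' boundary limits do rely on the compact trace embedding $\Hun\Subset L^2(\Gamma)$, which the paper spells out when treating $\dot\phi_h^{n+1}$ and the GNBC terms.
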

\begin{proof}
Theorem~\ref{cor:d-existence} shows the existence, for every $h>0$, of a solution to the fully
discrete problem \eqref{eq:potentialdiscrete}--\eqref{eq:NSEdiscrete} which, moreover, satisfies
estimate \eqref{eq:denergy}. This estimate implies that, for every $n$, as $h\rightarrow0$:
\begin{itemize}
  \item $\calW(\phi_h^n)$ remains bounded in $L^1(\Omega)$. Since the modified
  Ginzburg-Landau potential is a quadratic function of its argument, this implies that there is
  a subsequence, labeled again $\phi_h^n$, that converges weakly in $\Ldeux$.
  \item $\GRAD \phi_h^n$ remains bounded in $\bL^2$. This, together with the previous
  observation, gives us a subsequence that converges weakly in $\Hun$ and strongly in $\Ldeux$.
%   \item $\frak d\phi_{h\dt}$ remains bounded in $\frakh^{1/2}(\Hun)$.
  \item The strong $L^2$-convergence of $\phi_h^n$ implies that the convergence is
  almost everywhere and, since all the material functions are assumed continuous, the coefficients
  converge also almost everywhere.
  \item There is a subsequence of $\bu_h^{n+1}$ that converges weakly in $\bV$ and strongly in $\Ldeuxd$.
%   \item $\bu_{h\dt}$ remains bounded in $\frakh^{1/2}(\Ldeuxd)$.
  \item A subsequence of $V_h^n-\bar V_0^n$ converges weakly in $\Hunstar$ and hence strongly in $L^2(\Omega^\star)$.
  \item There is a subsequence of $q_h^{n+1}$ that converges weakly in $\Ldeux$. Moreover, we know that
  $K(\phi_h^n)\GRAD(\lambda q_h^{n+1} + V_h^{n+1})$ converges weakly. By the a.e.~convergence of the
  coefficients and the $L^2$-weak convergence of $\GRAD V_h^{n+1}$ we conclude that
  $\GRAD q_h^{n+1}$ must converge weakly and, thus, the convergence is weak in $\Hun$ and
  strong in $\Ldeux$.
%   \item $q_{h\dt}$ remains bounded in $\frakh^{1/2}(\Ldeux)$.
  \item The quantity
  $ \dot\phi_h^{n+1} = \tfrac{\frakd \phi_h^{n+1}}\dt + \bu_{h\btau}^{n+1} \psi(\phi_h^n)$
  remains
  bounded in $L^2(\Gamma)$, which implies that there is a subsequence of $\dot \phi_h^{n+1}$ that
  converges weakly in $L^2(\Gamma)$.
  \item $\GRAD \mu_h^n$ remains bounded in $\Ldeuxd$. Moreover, setting $\bar \mu_h = 1 $
  in \eqref{eq:chemdiscrete} and the observations given above, imply
  \begin{multline*}
    \left| \scl \mu_h^{n+1}, 1 \scr \right| \leq
    \left|  \frac\gamma\delta \scl \calW'(\phi_h^k) + \calA \frakd \phi_h^{k+1},1\scr  \right. \\
    \left. +\frac12 \scl \rho'(\phi_h^n) \bu_h^n, \bu_h^{n+1} \scr
    + \alpha \sbl \dot \phi_h^{k+1},1\sbr
    + \gamma \sbl \Theta_{fs}'(\phi_h^n) + \calB \frakd \phi_h^{n+1}, 1 \sbr \right|
    \leq c,
  \end{multline*}
  which shows that $\int_\Omega \mu_h^{n+1}$ remains bounded and, thus,
  $\mu_h^n$ remains bounded in $\Hun$ and so there is a subsequence that
  converges weakly in $\Hun$ and strongly in $\Ldeux$.
  \item Finally, we use the compatibility condition \eqref{eq:LBB} and the discrete momentum
  equation \eqref{eq:veldiscrete} to obtain an estimate on the pressure $p_h^{n+1}$,
  \begin{multline*}
    c \| p^{n+1}_h \|_{L^2} \leq
      \frac1\dt \| \rho(\phi_h^n)\|_{L^\infty} \|\frakd \bu_h^{n+1}\|_{\bL^2}
      + \frac1\dt \| \frakd \rho(\phi_h^{n+1})\|_{L^\infty} \| \bu_h^{n+1}\|_{\bL^2}
      + \| \rho(\phi_h^n)\|_{L^\infty} \| \bu_h^n \|_{\bH^1} \| \bu_h^{n+1} \|_{\bH^1} \\
      + \| \rho'(\phi_h^n)\|_{L^\infty}\|\GRAD\phi_h^n\|_{\bL^2}\|\bu_h^n\|_{\bH^1}\|\bu_h^{n+1}\|_{\bH^1}
      + \| \eta(\phi_h^n) \|_{L^\infty} \| \bS(\bu_h^{n+1}) \|_{\bL^2}
      + \| \beta(\phi_h^n) \|_{L^\infty} \| \bu_h^{n+1} \|_{\bV} \\
      + \alpha \| \psi(\phi_h^n) \|_{L^\infty(\Gamma)} \| \dot \phi_h^{n+1} \|_{L^2(\Gamma)}
      + \| \mu_h^{n+1} \|_{H^1} \| \GRAD \phi_h^n \|_{\bL^2}
      + \| q_h^{n+1} \|_{H^1} \|\GRAD (\lambda q_h^{n+1} + V_h^{n+1}) \|_{\bL^2} \\
      + \frac1\dt \| \rho'(\phi_h^n) \|_{L^\infty} \| \frakd \phi_h^{n+1} \|_{L^2} \| \bu_h^n \|_{\bV}
      \leq \frac{c}\dt,
  \end{multline*}
  which, for a fixed and positive $\dt$, implies the existence of a $L^2$-weakly convergent subsequence.
\end{itemize}
Let us denote the limit by
\[
  \left\{V_{\dt}-\bar{V}_{0,\dt},q_{\dt},\phi_{\dt},\mu_{\dt},\bu_{\dt},p_{\dt}\right\}
  \subset \Hunstar \times \Hun^3 \times \bV \times \tildeLdeux.
\]
It remains to show that this limit is a solution of
\eqref{eq:potentialsemidiscrete}--\eqref{eq:NSEsemidiscrete}:

\begin{description}
  \item[Equation \eqref{eq:potentialsemidiscrete}]
  Notice that if we show that, as $h \rightarrow 0$, the sequence $V_h^{n+1}$ converges to
  $V^{n+1}$ strongly in $\Hunstar$, then the a.e. convergence of the coefficients implies
  \begin{equation}
    \scl \vare^\star(\phi_h^{n+1}) \GRAD V_h^{n+1}, \GRAD W \scr_{\Omega^\star}
    \rightarrow
    \scl \vare^\star(\phi^{n+1}) \GRAD V^{n+1}, \GRAD W \scr_{\Omega^\star}.
    \label{eq:potentialconv}
  \end{equation}
  Let us then show the strong convergence by an
  argument similar to that of \cite[pp.~2778]{MR2511642}.
  For any function $V \in \Hunstar,$ we introduce 
  the elliptic projection $\calP_h V \in \polW_h(V)$ as the solution to
  \[
    \scl \GRAD \calP_h V, \GRAD W_h \scr_{\Omega^\star}
    =
    \scl \GRAD V, \GRAD W_h \scr_{\Omega^\star}, \quad \forall W_h \in \polW_h(0).
  \]
  It is well known that $\calP_h V \rightarrow V$ strongly in $\Hunstar$. Given that $\vare$ is uniformly
  bounded,
  \begin{align*}
    c\| \GRAD(V_h^{n+1} - V^{n+1}) \|_{\bL^2}^2 &\leq
    \scl \vare^\star(\phi_h^{n+1}) \GRAD (V_h^{n+1}-V^{n+1}),
      \GRAD  (V_h^{n+1}-V^{n+1}) \scr_{\Omega^\star} \\
    &=
    \scl \vare^\star( \phi_h^{n+1}) \GRAD V_h^{n+1}, 
        \GRAD (\calP_h V^{n+1} - V^{n+1}) \scr_{\Omega^\star} \\
    &+ \scl \vare^\star( \phi_h^{n+1}) \GRAD V_h^{n+1},
        \GRAD ( V_h^{n+1} - \calP_h V^{n+1} ) \scr_{\Omega^\star} \\
    &+  \scl \vare^\star( \phi_h^{n+1}) \GRAD V^{n+1},
        \GRAD ( V^{n+1} - V_h^{n+1}) \scr_{\Omega^\star} \\
    &= I + II +III.
  \end{align*}
  Let us estimate each one of the terms separately. Since the coefficients are bounded and the sequence
  $\GRAD V_h^{n+1}$ is uniformly bounded in $\Ldeuxd$, the strong convergence of $\calP_h V^{n+1}$ shows
  that $I\rightarrow0$. For $II$ we use the equation, namely
  \[
    II=
    \scl \vare^\star( \phi_h^{n+1}) \GRAD V_h^{n+1},
        \GRAD ( V_h^{n+1} - \calP_h V^{n+1} ) \scr_{\Omega^\star}
    = \scl q_h^{n+1},  V_h^{n+1} - \calP_h V^{n+1}  \scr
    \rightarrow 0
  \]
  since $q_h^{n+1}$ converges strongly in $\Ldeux$.
  Finally, notice that the last term can be rewritten as
  \begin{align*}
    III &= \scl \left( \vare^\star( \phi_h^{n+1}) \rrbracket
        - \vare^\star(\phi^{n+1} \right) \GRAD V^{n+1},
        \GRAD ( V^{n+1} - V_h^{n+1}) \scr_{\Omega^\star} \\
    &+ \scl \vare^\star(\phi^{n+1}) \GRAD V^{n+1},
        \GRAD ( V^{n+1} - V_h^{n+1}) \scr_{\Omega^\star} 
  \end{align*}
  The uniform boundedness of $\GRAD V_h^{n+1}$ in $\Ldeuxd$ implies that, for the first term,
  it suffices to show that
  $\left( \vare^\star( \phi_h^{n+1}) 
        - \vare^\star(\phi^{n+1}) \right) \GRAD V^{n+1} \rightarrow 0$ in $\Ldeuxd$,
  which follows from the Lebesgue dominated convergence theorem.
  For the second term, use the weak convergence of $\GRAD V_h^{n+1}$.
  This, together with the strong $L^2$-convergence of $q_h^{n+1}$ implies that the limit solves
  \eqref{eq:potentialsemidiscrete}.

  \item[Equation \eqref{eq:chargesemidiscrete}] The strong $L^2$-convergence of $q_h^{n+1}$ implies that
  $\tfrac1\dt \frakd q_h^{n+1} \rightarrow \tfrac1\dt \frakd q^{n+1}$ strongly in $\Ldeux$.
  Using the compact embeddings $\Hun \Subset L^4(\Omega)$ and $\bV \Subset \bL^4(\Omega)$, we see that
  \[
    \scl q_h^n \bu_h^{n+1}, \GRAD r \scr \rightarrow \scl q^n \bu^{n+1}, \GRAD r \scr,
    \quad \forall r \in \Hun,
  \]
  as $h\rightarrow0$.
  The term $K(\phi_h^n)\GRAD( \lambda q_h^{n+1} + V_h^{n+1})$ can be treated as in \eqref{eq:potentialconv}.
  These observations imply that the limit solves \eqref{eq:chargesemidiscrete}.

  \item[Equation \eqref{eq:phasesemidiscrete}] The strong $\bL^2$-convergence of $\bu_h^{n+1}$, 
  the weak $H^1$-convergence of $\phi_h^{n+1}$ and an
  argument similar to \eqref{eq:potentialconv} imply that the limit solves \eqref{eq:phasesemidiscrete}.

  \item[Equation \eqref{eq:chemsemidiscrete}] The smoothness of $\calW$ and the fact its growth 
  is quadratic imply
  \[
    \left| \scl \calW'(\phi_h^n) - \calW'(\phi^n), \bar \mu \scr\right|
    \leq
    \max_{\varphi} |\calW''(\varphi)| \| \phi_h^n - \phi^n \|_{L^2} \| \bar \mu \|_{L^2}
    \rightarrow
    0.
  \]
  A similar argument and the embedding $\Hun \Subset L^2(\Gamma)$ can be used to show
  convergence of $\Theta_{fs}'(\phi_h^n)$.
  Since $\rho$ is a bounded smooth function,
  \[
    \scl \rho'(\phi_h^n) \bu_h^n \SCAL \bu_h^{n+1}, \bar\mu \scr
    \rightarrow
    \scl \rho'(\phi^n) \bu^n \SCAL \bu^{n+1}, \bar\mu \scr.
  \]
  The strong $\bL^2$-convergence of $\GRAD V_h^{n+1}$ implies that
  \[
    \scl \calE(\phi_h^{n+1},\phi_h^n) |\GRAD V_h^{n+1}|^2, \bar \mu \scr
    \rightarrow
    \scl \calE(\phi^{n+1},\phi^n) |\GRAD V^{n+1}|^2, \bar \mu \scr,
  \]
  where it is necessary to have $\bar\mu \in L^\infty(\Omega)$.
  To conclude that \eqref{eq:chemsemidiscrete} is satisfied by the limit, it is left to show
  that $\dot \phi_h^{n+1}$ converges strongly in $L^2(\Gamma)$.
  We know that $\dot \phi_h^{n+1}$ converges weakly in $L^2(\Gamma)$. On the other hand
  $\tfrac1\dt \frakd \phi_h^{n+1}$ converges strongly in $L^2(\Gamma)$, $\bu_{h\btau}^{n+1}$ converges
  strongly in $\bL^2(\Gamma)$ and $\psi(\phi_h^n)$ converges a.e. in $\Gamma$.

  \item[Equations \eqref{eq:NSEsemidiscrete}] Clearly, \eqref{eq:pressemidiscrete} is satisfied. 
  To show that \eqref{eq:velsemidiscrete} holds, notice that
  \begin{multline*}
    \scl \overline{\rho(\phi_h^{n+1})} \bu_h^{n+1} - \overline{\rho(\phi^{n+1})} \bu^{n+1}, \bw \scr = \\
    \scl \overline{\rho(\phi_h^{n+1})}\left( \bu_h^{n+1} - \bu^{n+1} \right), \bw \scr
    + \scl \left( \overline{\rho(\phi_h^{n+1})} 
    - \overline{\rho(\phi^{n+1})}\right)\bu^{n+1},\bw \scr
    \rightarrow 0.
  \end{multline*}
  Since we assume that $\psi$ is smooth and the
  slip coefficient $\beta$ is smooth and depends only on the phase field, but
  not on the stress (as opposed to \S\ref{sub:pinning}), we can get convergence of the terms
  $ \sbl \beta(\phi_h^n) \bu_{h\btau}^{n+1}, \bw_{\btau} \sbr$ and
  $\sbl \bu_{h\btau}^{n+1} \psi(\phi_h^n), \bw_\btau \psi(\phi_h^n) \sbr$, respectively.
  The advection term
  can be treated using standard arguments and thus we will not give details here. The
  terms
  \[
    \scl \mu_h^{n+1} \GRAD \phi_h^n, \bw \scr, \qquad
    \scl q_h^n \GRAD( \lambda q_h^{n+1} + V_h^{n+1} ), \GRAD \bw \scr,
  \]
  can be treated using arguments similar to the ones given before. The term
  \[
      \scl \rho'(\phi_h^n) \frac{ \frakd\phi_h^{n+1} }\dt \bu_h^n, \bw \scr,
  \]
  can be easily shown to converge since all terms converge strongly. The convergence of
  the term
  \[
    \sbl \frac{\frakd \phi_h^{n+1}}\dt, \bw_{\btau}\psi( \phi_h^n )\sbr,
  \]
  follows again from the compact embedding $\Hun \Subset L^2(\Gamma)$.
  Finally, the convergence of the viscous stress term follows the lines of the proof of
  \eqref{eq:potentialconv}.
\end{description}

To conlcude, we notice that we do not need to reprove estimates similar to \eqref{eq:denergy}. These 
are uniformly valid, in $h$, for all terms in the sequence and, therefore, valid for the limit.
Moreover, if one wanted to obtain an energy estimate by repeating the arguments used to obtain 
Proposition~\ref{prop:denergy} it would be necessary first to obtain uniform $L^\infty$ bounds on
the sequence $\frakd \phi_{h,\dt}$, since one of the steps in the proof requires setting
$\bar \mu_h = 2\frakd \phi_h^{n+1}$.
\end{proof}

\begin{rem}[Limit $\dt \rightarrow 0$]
We are not able to pass to the limit when $\dt \rightarrow 0$ for several reasons. First, the
estimates on the pressure depend on the time-step and getting around this would require finer estimates
on the time derivative of the velocity, this is standard for Navier Stokes.
In addition, the terms
\[
  \scl \rho'(\phi^n) \frac{\frakd \phi^{n+1}}\dt \bu^n, \bw \scr, \qquad
  \scl \frac{ \frakd \rho(\phi^{n+1}) }\dt \bu^{n+1}, \bw \scr,
\]
would require finer estimates on the time derivative of the phase field which we have not been
able to show. It might be possible, however, to circumvent these two restrictions by defining
the weak solution to the continuous problem with an unconstrained formulation for the momentum
equation (\ie solution and test functions in $\bV$) and modifying the Cahn-Hilliard equations
to their ``viscous version'', in other words suitably adding a term of the form $\phi_t$. We
will not pursue this direction.
\end{rem}

\section{Conclusions and Perspectives}
\label{sec:concl}
Some possible directions for future work would be to extend the analysis by passing to the limit as
$\dt \rightarrow 0$, or investigate the phenomenological pinning model more thoroughly.  It would also be
interesting to look at the use of open boundary conditions on $\partial^\star \Omega^\star$, which is more
physically correct for some electrowetting devices.  As far as we know, this is an open area of research in the
context of phase-field methods. Other extensions of the model
could include the transport of surfactant at the liquid-gas interface, though this would make the model more
complicated.  We want to emphasize that our model gives physically reasonable results when modeling actual
electrowetting systems, and so could be used within an optimization framework for improving device design.

Concerning numerics, an important issue that has not been addressed is how to actually solve the 
discretized systems. Even
in the fully uncoupled case, the pressence of the dynamic boundary condition in the Cahn-Hilliard system
(Step 2 in the scheme of section~\ref{sec:NumExp}) makes this problem extremely ill-conditioned and
standard preconditioning techniques (for instance the one in \cite{MR2800707}) inapplicable.

\end{document}